\newtheorem{thm}{Theorem}
\newtheorem{lemma}[thm]{Lemma}
\newtheorem{prop}[thm]{Proposition}
\newtheorem{cor}[thm]{Corollary}
\newtheorem{conj}{Conjecture}
\theoremstyle{remark}
\newtheorem{remark}[thm]{Remark}
\theoremstyle{definition}
\newtheorem{definition}{Definition}
\numberwithin{thm}{section}
\numberwithin{equation}{section}
\newcommand{\nc}{\newcommand}
\newcommand{\R}{\mathbb{R}}
\nc{\jx}{\langle x \rangle}
\nc{\comment}[1]{\vskip.3cm
\fbox{
\parbox{0.93\linewidth}{\footnotesize #1}}
\vskip.3cm}
\nc{\bS}{\mathbb S}
\nc{\al}{\alpha}
\nc{\bet}{\beta}
\nc{\del}{\delta}
\nc{\Del}{\Delta}
\nc{\G}{\Gamma}
\nc{\g}{\gamma}
\nc{\lam}{\lambda}
\nc{\Lam}{\Lambda}
\nc{\Om}{\Omega}
\nc{\om}{\omega}
\nc{\ta}{\tau}
\nc{\w}{\omega}
\nc{\io}{\iota}
\nc{\h}{\theta}
\nc{\z}{\zeta}
\nc{\s}{\sigma}
\nc{\Si}{\Sigma}
\nc{\e}{\epsilon}
\renewcommand{\k}{\kappa}
\newcommand{\ka}{\kappa}
\nc{\ran}{\rangle}
\nc{\lan}{\langle}
\newcommand{\re}{\operatorname{Re}}
\newcommand{\im}{{\rm Im}}
\newcommand{\Tr}{\mathrm{Tr}}
\newcommand{\tr}{\mathrm{Tr}}
\nc{\bfone}{{\bf 1}}
\newcommand{\ra}{\rightarrow}
\newcommand{\ls}{\lesssim}
\newcommand{\p}{{\partial}}
\newcommand{\n}{{\nabla}}
\def\<{\langle}
\def\>{\rangle}
\newcommand{\den}{\operatorname{den}}
\newcommand{\DETAILS}[1]{}
\newcommand{\SID}[1]{}
\title[Time-dependent density functional theory]{Long-time behaviour of time-dependent 
\\ density functional theory}
\author{Fabio Pusateri}
\address{Fabio Pusateri,  Department of Mathematics, University of Toronto, 40 St. George street, Toronto, 
  M5S 2E4, Ontario, Canada}
\email{fabiop@math.toronto.edu}
\author{Israel Michael Sigal}
\address{Israel Michael Sigal, Department of Mathematics, University of Toronto, 40 St. George street, Toronto, 
  M5S 2E4, Ontario, Canada}
\email{im.sigal@utoronto.ca}
\begin{document}

\begin{abstract}
The density functional theory (DFT) is a remarkably successful theory of electronic structure of matter. 
At the foundation of this theory lies the Kohn-Sham (KS) equation.
In this paper, we describe the long-time behaviour of the time-dependent KS equation.
%equation which is at the heart of the time-dependent density functional theory (DFT),
%The latter  is based on %an %the time-dependent extension of 
Assuming weak self-interactions, we prove %local decay inequalities  from which we derive
global existence and scattering %property for this equation
in (almost) the full ``short-range'' regime. %We introduce
This is achieved with new and simple techniques, naturally compatible with the structure of the DFT and 
%KS equation and 
%into this subject, based on
involving commutator vector fields and non-abelian versions of Sobolev-Klainerman-type spaces
and inequalities.
%which are. %linked
%rather simple 
%
%We %propose a natural notion of {\it scattering criticality} and %obtain scattering result which covers the
%establish scattering behaviour in (almost) the full %scattering subcritical 
%``short-range'' regime. %in the case of a convolution potential, %(Hartree theory) 
%and a large range of power law exchange correlation terms. %(Density Functional Theory).
\end{abstract}

\maketitle

\tableofcontents

\bigskip
\section{Introduction}

\medskip
\subsection{The DFT equation}
The density functional theory (DFT) is a remarkably successful theory of electronic structure of matter 
(see e.g. \cite{Jones, Kohn, KL, Lions, CLeBL} for some reviews). 
It naturally applies not just to electrons, but  to any fermion gas, say, of atoms, 
molecules or nucleons considered as point particles.   

At the foundation of the DFT is the seminal Kohn-Sham equation (KSE). Originally written in the stationary context 
and for pure states (represented through the Slater determinant by orthonormal systems of $n$ functions, 
called orbitals), 
%zero temperature case, 
the KSE has a natural extension to the time-dependent %, positive temperature 
framework (see e.g. \cite{BCS, Burk, BWG, CS1, GR, Ull}). % CLeBL
%The time-dependent density functional theory (DFT) is based on the time-dependent extension of the Kohn-Sham equation (KSE). 
Moreover, it can be rewritten in terms of orthogonal projections and then extended to density operators, 
i.e. positive, trace-class operators (see for example \cite{ChPavl,CS1,GS,LewSab1} and references therein) %more references?) 
%It can be formulated in terms of a {\it positive} operator-family $\g=\g(t)$ %acting on $L^2(\R^d)$ as:
and takes the following form:
\begin{align}\label{KS}
\frac{\p \g}{\p t} = i [h_{\g},\g], \qquad h_{\g}:=-\Delta + f(\g),
\end{align}
with $\g=\g(t)$ a {\it positive} operator-family  on $L^2(\R^d)$ and $f$ mapping a class of self-adjoint operators on $L^2(\R^d)$ into itself. %\footnote{\eqref{KS} is the time-dependent extension and reformulation of the original KS equation.}
In addition, we require %that $\g(t)$ have continuous integral kernels $\g(x, y, t)$ and 
that $f$ depends on $\g$ through the function 
\[\rho_\g(x, t):=\g(x, x, t),\] 
 where $\g(x, y, t)$ are integral kernels of $\g(t)$, i.e. 
 there is  $g: L^1_{\rm loc}(\R^d, \R) \ra L^2_{\rm loc}(\R^d, \R)$, %\toitself$,
%\blacktriangleleft$
%\righttoleftarrow$
 such that \[f(\g)=g(\rho_\g),\] 
where $g(\rho)$ on the right-hand side is considered as a multiplication operator.
%for some $g:L^1_{\rm loc}(\R^d)\ra \R$. %interpreted as the one-particle density.
%Hence is the term the {\it density functional theory} (DFT). 
%$g:L^1_+(\R^d)\ra \R_+$ and $\rho_\g(x, t):=\g(x, x, t)$. 
%$h:=-\Delta$,  and $ex(\g)$ is the operator with the integral kernel $ex(\g)(x, y):=-v(x-y)\g(x, y)$. 
%(We write $ex(\g)$ as $ex(\g)=: v\# \g$.)  Here $A(x, y)$ stands for the integral kernel of an operator $A$. 
$L^2(\R^d)$ is called the one-particle space, $\g(t)$, the {\it density operator} at time $t$ and $\rho_\g$, the {\it one-particle charge density}.

Initial conditions are taken to be %self-adjoint, 
non-negative operators, $\g|_{t=0}=\g_0\ge 0$ and, for fermions, 
in addition, satisfying $\g_0 \le 1$, which encodes the Pauli exclusion principle.
It is easy to show that, under suitable conditions %\eqref{g-cond0} 
on $g$, %below, 
the solutions have the same properties,  %$\g$ is a self-adjoint, non-negative %, trace-class operator,  
%For fermions, %(the HF equation), The density operator, 
%$\g_0$, satisfies, in addition,  $\g_0 \le 1$, which encodes the Pauli exclusion principle and which is also conserved. Hence we have
  $0\le \g\ (\le 1)$. (In fact, all eigenvalues of $\g$ are conserved under the evolution.)
\DETAILS{The density operator, $\g$, satisfies \begin{equation} \label{gam-prop} %{gamma-sigma-prop} 
	0\le \gamma=\g^*\ ( \le 1)\end{equation}
%while for bosons (the H equation), it is not restricted from above and satisfies only $0\le \gamma$. 
where the second inequality is required only for fermions and it encodes the Pauli exclusion principle.} 
%(after peeling off the spin variables) 

Since $\g \ge 0$, we have that  $\rho_\g(x, t):=\g(x, x, t)\ge 0$ %$\g$ is called the (one-particle) density operator
and, since it is interpreted as the one-particle (charge) density,  $\tr \g =\int \rho_\g dx$ is the total number of particles.

Since $h_{\g}$ depends on $\g$ only through the density $\rho_\g$, 
$h_{\g}\equiv h(\rho_\g)$, Eq. \eqref{KS} is equivalent to the equation  for the density $\rho$,
\[\frac{\p \rho}{\p t} = \den(i [h(\rho),\g]), %\rho=\rho_\g,\  \quad \frac{\p \g}{\p t} = i [h(\rho),\g]
\]
where $\den(A)\equiv \rho_A$, the density for an operator $A$. 
Hence is the term {\it density functional theory} (DFT).

Note that \eqref{KS} with $f(\g)=g(\rho_\g)$ presents a natural extension of the 
generalized nonlinear Schr\"odinger equation with the nonlinearity given by $g(|\psi|^2)$, which includes 
both the local power nonlinearity and nonlocal Hartree one, see \eqref{g-ex1}. 
Indeed, if the initial condition, $\g|_{t=0}=\g_0$, is a rank-one projection, then so is the solution $\g(t)$.   
Applying the resulting equation 
%is equivalent to
to an arbitrary vector, we arrive at the generalized nonlinear Schr\"odinger equation for the unknown. 
(In this case the conserved $L^2-$norm equals one.) 
% cf. \cite{Caz}, Sect 3.1 %\subsection{The self-interactions} 

We assume that the nonlinearity or self-interaction $f(\g)$ 
is {\it translation, rotation and gauge covariant} in the sense that
\begin{align}\label{nonlin-covar}
U_{\lam} f(\g)U_{\lam}^{-1}= f(U_{\lam} \g U_{\lam}^{-1}),  
\end{align}
where $U_{\lam}$ is either the translation, rotation and gauge transformation, respectively given by
\begin{align*}
\begin{split}
& U_{\lam}^{\mathrm{tr}} : f(x) \mapsto f(x+\lambda), \qquad \lam \in \R^d,
\\
& U_{\lam}^{\mathrm{rot}} : f(x) \mapsto f(\lambda^{-1}x), \qquad \lam \in O(d),
\\
& U_{\lam}^{\mathrm{g}}  : f(x) \mapsto e^{i\lam}f(x), \qquad \lam \in \R.
\end{split}
\end{align*}

For $f(\g)=g(\rho_\g)$,  \eqref{nonlin-covar} and $U_{\lam}\rho_\g=\den(U_{\lam}\g U_{\lam}^{-1})$ 
imply that  $g(\rho_\g)$ satisfies 
\begin{align}\label{nonlin-covar'}
U_{\lam} g (\rho)U_{\lam}^{-1}= g (U_{\lam}\rho).
\end{align}
Here $g(\rho)$ is considered as a multiplication operator, and $\rho$ as a function. 
%$g(\rho)$ is a Galilei invariant functional. 

A standard example of self-interaction in physics %the literature
 is the sum of a %{\it convolution} or 
{\it Hartree-type} nonlinearity and a local {\it exchange-correlation term} of the form
\begin{align}\label{g-ex1}
g(\rho) = v\ast\rho + \mathrm{xc}(\rho)
\end{align} 
for some potential $v=v(x)$ and some function $\mathrm{xc}= \mathrm{xc}(\rho)$.
Important cases of $v$ in \eqref{g-ex1} are $v(x)=\lam/|x|$ (the Coulomb or Newton potential, if $d=3$) 
and $v(x)=\lam\del(x)$ (the local potential, which can also considered as part of the exchange term).
An important example of exchange-correlation term is the Dirac one, 
$\mathrm{xc}(\rho)=-c \rho^{1/3}$, $c> 0$, in $3$ dimensions.

In Subsection \ref{sec:Res} we will define a general class of self-interactions that we are going to consider.
%To be concrete one can think of our results covering the full sub-critical range of convolution potentials
%The interactions that we can treat include the full sub-critical range of convolution potentials $v = \lambda/|x|^\alpha$ for $\alpha\in(1,d)$,  and a large range of exchange-correlation terms of the form $\rho^\beta$,  including the full sub-critical range $\beta > 1/d$ for $d=1,2$, and $\beta \geq 1/2$ for $d=3$. For a precise definition of sub-critical interactions see Section \ref{SecProperties}.

\DETAILS{The Hartree equation \eqref{HE} can be extended naturally to the density functional model (DFT) 
by adding to $h_{\g}$ an exchange energy term, $ex(\rho_\gamma)$, which is a local %(i.e. $ex : [0,\infty) \rightarrow \R$) 
   function of the function $\rho_\gamma(x):= \gamma(x, x)$, say, 
   the G\^ateaux derivative of the Dirac exchange energy $Ex(\rho)=-c \int\rho^{4/3}$. 
In other words, one considers more geral self-consistent hamiltonian
  \begin{align}\label{hgam-gen}h_{\g}:=-\Delta + g(\rho_\g),\ \text{ or, }\ h_{\rho}:=-\Delta + g(\rho),\end{align} 
  with %, recall, %$h:=-\Delta$, $\rho_\g(x, t):=\g(x, x, t)$,  and 
  $g(\rho)$ real, Galilei and gauge invariant functional, 
  e.g.  $g(\rho) =v *\rho + ex(\rho)$,  or  $g(\rho)= \lam\rho+ ex(\rho)$, and, say $ex(\rho)=-c \rho^{1/3}$.} 
 
% we have $\rho_\g(x):=\g(x, x)=\sum_i |\psi_i(x)|^2$ and  $ex(\g)(x, y):=-v(x-y)\g(x, y)=-\sum_i (\bar\psi_iv *\psi_i) $. 

%\subsection{Some literature}

In general, one would like to address the following problems 
%for the largest possible class of self-interactions:

\setlength{\leftmargini}{2em}
\begin{itemize}
\item Global existence vs blowup; %(the `2nd moment' criterion);
\item Asymptotic behaviour as $t\ra \infty/T_{\rm blowup}$ (scattering theory, return to equilibrium vs. blowup dynamics);
\item Static, self-similar and travelling wave solutions and their stability; %(related to the previous item);
\item Macroscopic limit (effective equations).
\end{itemize}

The first rigorous results were obtained in \cite{LiebSim,Lions1,Lions}, see \cite{LeBL,Lieb}.
The existence theory for the standard Hartree and Hartree-Fock equations 
(which are similar and closely related to \eqref{KS}) and for the Kohn-Sham equation
with trace class initial data, $\tr \g_0<\infty$, was developed in \cite{BoveDPF,Chad,ChadGl,Jerome}.
(See \cite{ACV} and references therein for results on the related Schr\"odinger-Poisson system.)

For the Hartree equation Lewin and Sabin \cite{LewSab1,LewSab2}
studied the harder case of non-trace class solutions.
For initial conditions given by suitable trace-class perturbations of translation invariant states $\gamma_f = f(-\Delta)$,
the authors established
%which is a harder case, \cite{LewSab1,LewSab2}
global well-posedness \cite{LewSab1} in dimensions $d=2,3$ as  well as dispersive properties of the solutions and scattering for $d=2$ \cite{LewSab2}.
%for $v$ satisfying ...
%
%Global well-posedness follows from the conservation of the relative (free) energy of the state, 
%counted relatively to the stationary state.
%We indeed use a general notion of relative entropy, 
%which allows us to treat a wide class of stationary states f. 
%Our results are based on a Lieb–Thirring inequality at positive density 
%and on a recent Strichartz inequality for orthonormal functions, which are both due to 
%Frank, Lieb, Seiringer and the first author of this article.
%
These results have been extended to %perturbations of the Fermi sea and 
the more singular case of local nonlinearities ($v(x)=\lam\del(x)$) %is delta pair interactions
by Chen, Hong and Pavlovi\'c, who proved global well-posedness 
%for this case
in dimensions $d=2,3$ and zero temperature \cite{ChPavl}.
%{\bf and the scattering for small initial conditions in weighted Hilbert-Schmidt norms (\cite{ChPavl2})}.
The same authors also proved scattering results %of \cite{LewSab2} 
in the case of dimension $3$ and higher \cite{ChPavl2}, left open in \cite{LewSab2}. 
%{\bf (for small initial conditions?)}
Finally, we mention the recent work of Collot and de Suzzoni \cite{CollS} 
who proved analogues of the results of \cite{LewSab2,ChPavl2}
for the Hartree equation for random fields in $d\geq 4$.
%reproving in a different way one of the results (check) 
%for the Hartree equation for random fields, 
%which is equivalent to the Hartree equation for density operators, in $d\geq 4$.

For classical papers on scattering theory for Schr\"odinger and Hartree type evolution equations we refer to
Strauss \cite{Strauss} and Ginibre-Velo \cite{GV1}. 
See also the works \cite{HN,KP} and references therein, on the scattering critical cases %for scalar equations. We also mention
and the work on the Chern-Simons-Schr\"odinger equation by Oh and the first author \cite{CSS},
where weighted energy estimates are done covariantly, by adapting the 
standard Schr\"odinger ``vector field'' (see $j_\ell$ in \eqref{J'})
to the covariant structure of the equations.
%which is an idea somewhat related to our present work.
See also \cite{Sig,SigSof} on the use of related ideas %on ``propagation observable''
in the context of quantum scattering theory.
%adaptation of the standard $J$ vectorfield to the covariant structure of the equation...

%When $p=1$ we recover the result of \cite{ChPavl}
%in the case of small data, and prove time-decay, bounds on local norms, and scattering in this case.

%\cite{Ozawa,HN,KP}

%(for both fermions and bosons and for zero and positive temperature), and for fermions with
%delta pair interactions in the vicinity of an equilibrium solution (the Fermi sea) at zero temperature, in
%and dimensions $d=2,3$, 
%and prove global well-posedness of the corresponding Cauchy problem, by Th. Chen, Hong and Pavlovi\'c in \cite{ChPavl}. 
%(see Section \ref{sec;gHFeq-prop} %and \ref{sec;gHFeq-static} for more on this).

\iffalse
One of the goals of the present paper is to study the Cauchy problem for \eqref{KS} 
in the case of a dilute weakly interacting gas, for data in the trace class and 

\begin{itemize}
 \item[(1)] obtain (almost) optimal scattering results, also when the effects of 
  exchange correlation terms are included; see \eqref{g-ex1};
 
 \item[(2)] introduce some new simple techniques to study \eqref{KS}, based on commutators,
  estimates of local norms, %of the kernels of the operators involved,
  and non-abelian versions of weighted estimates and Klainerman-Sobolev type inequalities; 
  see the discussion at the end of this section.
 
 \item[(3)]

\end{itemize}
\fi

\medskip
\subsection{Results}\label{sec:Res} 
%Our goal is to prove the GWP of  the DFTE \eqref{HE} and develop the scattering theory for small i.c.'s.  

%The relation $a \ll b$ will indicate $a \leq \varepsilon b$ for $\varepsilon$ a sufficiently small constant. 
%which depends on $d$. 
 
For $p\in[1,\infty]$, we let $L^p(\R^d)$ be the standard Lebesgue spaces on $\R^d$ with the norms  denoted by
$\| \cdot \|_{L^p}$ or $\| \cdot \|_p$.
We also let $ L^r_w$ denote the weak $L^r$ space.
We assume that $f(\gamma)$ is of the form %, for $x\in\R^d$, %with $d\leq 3$
\begin{align}\label{SI1}
f(\g)= g(\rho) = \lambda_1 v \ast \rho + \lambda_2 \rho^\beta, % g_1(\rho) + g_2(\rho), \qquad g_1
\end{align}
with
\begin{align}\label{SI2}
v \in L^r_w(\R^d), \quad r\in(1,d), \qquad \mbox{and} \qquad
\beta > 1/\min(d, 2).
\DETAILS{\left\{ \begin{array}{lr}
%\beta \geq 1/2, \quad & d=3
%\\
\beta > 1/2, & d=2,3
\\
\beta > 1, & d=1.
\end{array}\right.}
\end{align}
%where
Note that the convolution term is omitted for $d=1$.

To keep the exposition simple we let $d\leq3$, and will make a few comments about extensions 
to the higher dimensional cases in Remarks \ref{rem:d>3} and \ref{rem:d>3'} below.

%\paragraph{Notation.}
%We let $d$ denote the spatial dimension. 
%For $p\in[1,\infty]$ we let $\| \cdot \|_{L^p}$ or $\| \cdot \|_p$ denote the standard $L^p(\R^d)$ norms.
%The relation $a \ll b$ will indicate $a \leq \varepsilon b$ for $\varepsilon$ a sufficiently small constant  which depends on $d$. %and the given $g$.
Let $I^r$ denote the space of bounded operators satisfying 
\begin{align}\label{defIr}
{\|A\|}_{I^r}:=(\tr(A^*A)^{r/2})^{1/r}<\infty,
\end{align}
a trace ideal or non-commutative $L^r-$space. 

%\paragraph{\bf Asymptotic completeness} 
We say that Eq \eqref{KS} is {\it asymptotically complete}, or has the {\it short-range scattering property}, 
if and only if, for any initial condition $\g_0\in I^1$, there is an operator $\g_\infty\in I^1$ independent of $t$, 
such that the solution $\g(t)$ to equation \eqref{KS} satisfies, as $t\ra \infty$,  %the estimate 
\begin{align}\label{SRscat}
{\big\| \g(t) - e^{i t \Delta} \g_\infty e^{-i \Delta t} \big\|}_{I^1} \ra 0.
\end{align}   
%as $t\ra \infty$, for some $\g_\infty$. %where $\al(t)$ solves the `free' equation $ \p_t \al  = i [h,\al]$, with the i.c.  $\al_0$. %\end{prop}
Our main result is

%\begin{thm}[Local decay] \label{thm:LocDec}
%Let $d\le 3$ and let $g (\rho)$ satisfy \eqref{SI1}-\eqref{SI2}. 
%Then the solution of  \eqref{KS} with an intitial datum $\gamma_0=\gamma_0^\ast$ satisfying
%\begin{align}\label{g_0assumption}
%\|\jx^b\g_0 \jx^b\|_{I^1} < \infty,
%\end{align}
%for some integer $b > d/2$, has the local decay property
%\begin{align}\label{loc-dec'}
%&\|\g(t) \|_{(s)} \ls |t|^{-d(1-2/s) } \|\jx^b\g_0 \jx^b\|_{I^1},\\ & \quad b > d(1/2-1/s),\ 2\le s\le \infty.
%\end{align}
%for all $t\in\R$, 
%provided $|\lambda_1| + |\lambda_2|$ is sufficiently small depending on $\|\jx^b\g_0 \jx^b\|_{I^1}$.
%\end{thm}

\begin{thm} \label{thm:GWP-Scatt}
Consider \eqref{KS} with $g(\rho)$, satisfying \eqref{SI1}-\eqref{SI2}, and
initial datum $\gamma_0 = \gamma_0^\ast = \gamma(t=0)$ satisfying %\eqref{g_0assumption}
\begin{align}\label{g_0assumption}
\|\jx^b\g_0 \jx^b\|_{I^1} + \| \langle \nabla \rangle^b \g_0 \langle \nabla \rangle^b\|_{I^1} < \infty,
\end{align}
for some integer $b > d/2$.
%\begin{align}\label{g_0assumption}
%\|\jx^b\g_0 \jx^b\|_{I^1} < \infty,
%\end{align}
%for some integer $b > d/2$.
Then, for $|\lambda_1|,|\lambda_2|$ sufficiently small depending on $\|\jx^b\g_0 \jx^b\|_{I^1}$,
the equation \eqref{KS} with the initial datum $\gamma_0$
\begin{itemize}
\item[(i)] is globally well-posed;
%\item \eqref{HE} has the local decay property; 
\item[(ii)] is asymptotically complete (see \eqref{SRscat}).
\end{itemize}
\end{thm}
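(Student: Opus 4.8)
\emph{Proof plan.} The plan is to recast \eqref{KS} in Duhamel form relative to the free density‑operator flow and run a bootstrap in a time‑dependent trace‑ideal norm built from the Schr\"odinger vector fields, then read off the asymptotic state.

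\smallskip
\noindent\textbf{1. Duhamel and vector fields.} Write $W(t):=e^{it\Delta}$ for the one‑particle free propagator, so the free flow on density operators is $\gamma\mapsto W(t)\gamma W(-t)$ (as in \eqref{SRscat}); it conserves $\Tr$ and all eigenvalues. Duhamel's formula for \eqref{KS} reads
\begin{align*}
\gamma(t)=W(t)\gamma_0 W(-t)+i\int_0^t W(t-s)\,\big[\,g(\rho_{\gamma(s)}),\,\gamma(s)\,\big]\,W(s-t)\,ds .
\end{align*}
Introduce the commuting family of Galilean vector fields $J_\ell(t):=W(t)\,x_\ell\,W(-t)=x_\ell+2it\partial_\ell$, which commute with the generator of the free flow and obey the two identities used throughout: for any function $\phi$, $[J_\ell(t),\phi]=2it\,\partial_\ell\phi$ as operators (since $x_\ell$ commutes with multiplication by $\phi$); and, at the level of densities, for every operator $A$,
\begin{align*}
\partial^\alpha\rho_A=(2it)^{-|\alpha|}\,\rho_{\,\mathrm{ad}^\alpha_{J(t)}A},
\end{align*}
which follows from the explicit Gaussian kernel of $W(t)$. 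Define the time‑dependent norm
\begin{align*}
\|\gamma\|_{\cX(t)}:=\big\|\lra{J(t)}^{b}\,\gamma\,\lra{J(t)}^{b}\big\|_{I^1}+\big\|\lra{\nabla}^{b}\,\gamma\,\lra{\nabla}^{b}\big\|_{I^1}
\end{align*}
(together with the finitely many mixed and iterated‑commutator norms in $J(t)$ and $\nabla$ needed to close the estimates below); all of these are finite for $\gamma_0$ by \eqref{g_0assumption}, using e.g. the Cauchy--Schwarz bound $\|A\gamma B\|_{I^1}\le\|A\gamma^{1/2}\|_{I^2}\|B^\ast\gamma^{1/2}\|_{I^2}$ to dominate iterated commutators up to order $2b$ by the two‑sided weighted norms. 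Since $\lra{J(t)}^{b}=W(t)\jx^{b}W(-t)$ and $[\lra{\nabla},W(t)]=0$, the free flow is an isometry of $\cX(t)$; in particular $\|W(t)\gamma_0 W(-t)\|_{\cX(t)}=\|\gamma_0\|_{\cX(0)}=:N_0<\infty$.

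\smallskip
\noindent\textbf{2. Non‑abelian Klainerman--Sobolev inequality.} Next I would prove the dispersive bound
\begin{align*}
\|\rho_A(t)\|_{L^p}\ \lesssim\ \lra{t}^{-d(1-1/p)}\,\|A\|_{\cX(t)},\qquad 1\le p\le\infty,
\end{align*}
valid because $b>d/2$. For $p=\infty$ and $t\gtrsim1$ one uses $|W(t)(x,y)|\lesssim t^{-d/2}$ together with $\iint|B(y,z)|\,dy\,dz\lesssim\|\jx^{b}B\jx^{b}\|_{I^2}$ ($b>d/2$), $I^2\subset I^1$, and $\lra{J(t)}^b=W(t)\jx^bW(-t)$; for $t\lesssim1$ one uses the static bound $\|\rho_A\|_\infty\lesssim\|\lra{\nabla}^{b}A\lra{\nabla}^{b}\|_{I^1}$ coming from the $L^2$ kernel of $\lra{\nabla}^{-b}$; the endpoint $p=1$ is $\|\rho_A\|_{L^1}\le\|A\|_{I^1}\le\|A\|_{\cX(t)}$; and one interpolates. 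Combined with the identity from Step~1 this upgrades to estimates for derivatives of the density, $t^{|\alpha|}\|\partial^\alpha\rho_{\gamma(t)}\|_{L^p}\lesssim\lra{t}^{-d(1-1/p)}\|\mathrm{ad}^\alpha_{J(t)}\gamma(t)\|_{\cX(t)}$, with $\mathrm{ad}^\alpha_{J(t)}\gamma$ controlled by finitely many levels of the $\cX$‑hierarchy.

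\smallskip
\noindent\textbf{3. Nonlinear estimate, global existence, scattering.} The core is the bilinear bound
\begin{align*}
\big\|[\,g(\rho_\gamma),\,\gamma\,]\big\|_{\cX(t)}\ \lesssim\ \big(|\lambda_1|\,\lra{t}^{-d/r}+|\lambda_2|\,\lra{t}^{-d\beta}\big)\,Q\big(\|\gamma\|_{\cX(t)}\big),
\end{align*}
where $Q$ is a polynomial with $Q(0)=Q'(0)=0$. Here $g(\rho_\gamma)$ is the multiplication operator by $m=\lambda_1\,v\ast\rho_\gamma+\lambda_2\,\rho_\gamma^{\,\beta}$. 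Commuting $\lra{J(t)}^{b}$ and $\lra{\nabla}^{b}$ past $m$ (Leibniz/pseudodifferential calculus) produces the main term $m\,\lra{J(t)}^{b}\gamma\,\lra{J(t)}^{b}$, bounded by $\|m\|_\infty\|\gamma\|_{\cX(t)}$, plus terms carrying factors $t^{|\alpha|}\partial^\alpha m$ with $1\le|\alpha|\le b$ against lower‑order weights --- crucially at most $b$ weights/derivatives ever reach $\gamma$ on either side, so no loss of regularity occurs. Each $t^{|\alpha|}\partial^\alpha m$ is then controlled via the cancellation built into the vector fields: for the Hartree part $t^{|\alpha|}\partial^\alpha(v\ast\rho_\gamma)=(2i)^{-|\alpha|}\,v\ast\rho_{\mathrm{ad}^\alpha_{J(t)}\gamma}$, whence $\|t^{|\alpha|}\partial^\alpha(v\ast\rho_\gamma)\|_\infty\lesssim\|v\|_{L^r_w}\|\rho_{\mathrm{ad}^\alpha_{J(t)}\gamma}\|_{L^{r'}}\lesssim\lra{t}^{-d/r}\|\gamma\|_{\cX(t)}$ by Step~2, with $d/r>1$ since $r<d$; for the exchange part $t^{|\alpha|}\partial^\alpha(\rho_\gamma^{\,\beta})$ is a finite sum of terms $t^{|\alpha|}\rho_\gamma^{\,\beta-j}\prod_i\partial^{\beta_i}\rho_\gamma$ which, after rewriting each $t\,\partial\rho_\gamma$ via the identity of Step~1 and using $\|\rho_\gamma\|_\infty\lesssim\lra{t}^{-d}\|\gamma\|_{\cX(t)}$ together with the elementary bound $|\nabla\rho_\gamma|^2\lesssim\rho_\gamma\,\|\nabla^2\rho_\gamma\|_\infty$ (valid since $\rho_\gamma\ge0$, to absorb negative powers of $\rho_\gamma$), is $\lesssim\lra{t}^{-d\beta}Q(\|\gamma\|_{\cX(t)})$, integrable because $d\beta>1$ by \eqref{SI2}. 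Feeding this into Duhamel gives $\|\gamma(t)\|_{\cX(t)}\le N_0+C\int_0^t(|\lambda_1|\lra{s}^{-d/r}+|\lambda_2|\lra{s}^{-d\beta})Q(\|\gamma(s)\|_{\cX(s)})\,ds$, and since the time integral over $[0,\infty)$ is finite, a contraction‑mapping argument for the Duhamel map on $\{\gamma:\sup_{t\ge0}\|\gamma(t)\|_{\cX(t)}\le 2N_0\}$ (using this estimate and its difference version) yields, for $|\lambda_1|,|\lambda_2|$ small depending only on the data, a unique global solution with $\sup_{t\ge0}\|\gamma(t)\|_{\cX(t)}\le 2N_0$; this is (i). For (ii), the curve $t\mapsto W(-t)\gamma(t)W(t)=\gamma_0+i\int_0^t W(-s)[g(\rho_{\gamma(s)}),\gamma(s)]W(s)\,ds$ has $I^1$‑integrand bounded by $\|[g(\rho_{\gamma(s)}),\gamma(s)]\|_{\cX(s)}\lesssim(|\lambda_1|\lra{s}^{-d/r}+|\lambda_2|\lra{s}^{-d\beta})Q(2N_0)\in L^1_s$; hence $\gamma_\infty:=\gamma_0+i\int_0^\infty W(-s)[g(\rho_{\gamma(s)}),\gamma(s)]W(s)\,ds\in I^1$ exists, and $\|\gamma(t)-W(t)\gamma_\infty W(-t)\|_{I^1}=\big\|W(-t)\gamma(t)W(t)-\gamma_\infty\big\|_{I^1}\to0$ as $t\to\infty$, which is \eqref{SRscat}.

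\smallskip
\noindent\textbf{4. Main obstacle.} I expect the crux to be the bilinear estimate of Step~3 --- equivalently, the design of the non‑abelian Sobolev--Klainerman space $\cX(t)$ (and the finite family of mixed/commutator norms it must incorporate) so that it is simultaneously (a) stable under the free flow, (b) strong enough that the Klainerman--Sobolev inequality controls the weighted $L^p$‑norms of $\rho_\gamma$ and its derivatives at the dispersive rates, and (c) closed under the commutators of the vector‑field weights with $g(\rho_\gamma)$ \emph{without} any loss of derivatives or weights, the growing powers of $t$ being systematically cancelled by the identities $[J_\ell(t),\phi]=2it\,\partial_\ell\phi$ and $\partial^\alpha\rho_A=(2it)^{-|\alpha|}\rho_{\mathrm{ad}^\alpha_{J(t)}A}$. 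The borderline regularity of the exchange term $\rho_\gamma^{\,\beta}$ near its zeros (when $\beta<1$) is a secondary technical point, handled as above and responsible --- together with the dispersive rate for $v\ast\rho$ --- for the precise thresholds $r<d$ and $\beta>1/\min(d,2)$ in \eqref{SI2}.
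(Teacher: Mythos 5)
Your architecture is genuinely different from the paper's: you bootstrap directly on $\g$ in weighted $I^1$ norms $\|\langle J(t)\rangle^b\g\langle J(t)\rangle^b\|_{I^1}$, whereas the paper first factors $\g=\k^*\k$ and runs the whole argument on the half-density $\k$ in the Hilbert space $I^2$, with the vector fields acting by commutators $J\k=[j,\k]$ and the dispersive decay extracted through mixed kernel norms $L^2_rL^s_c$ (see \eqref{L2Ls-est} and Lemma \ref{lem:energy-est}). For the Hartree part alone your scheme is plausible and close in spirit to the paper's. However, there is a genuine gap in your treatment of the exchange--correlation term, and it is precisely the point where the passage to $\k$ is doing real work in the paper.

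The gap: for $d=2,3$ you must take $b\ge 2$, so your commutator expansion of $\langle J(t)\rangle^b[\,m,\g\,]\langle J(t)\rangle^b$ produces the term $(2it)^2\partial^2 m$ multiplying $\g\langle J(t)\rangle^b$ with no weight left on that side, and your Step 3 bounds it by $\|t^2\partial^2 m\|_{L^\infty}$. For $m\supset\lambda_2\rho^\beta$ with $\beta\in(1/2,1)$ (allowed by \eqref{SI2}) this quantity is generically \emph{infinite}: $\partial^2(\rho^\beta)=\beta(\beta-1)\rho^{\beta-2}(\partial\rho)^2+\beta\rho^{\beta-1}\partial^2\rho$, and even after your elementary inequality $|\nabla\rho|^2\lesssim\rho\,\|\nabla^2\rho\|_\infty$ you are left with $\rho^{\beta-1}$ times bounded factors; near a nondegenerate zero of $\rho$ (e.g.\ $\rho\sim|x|^2$) one has $\partial^2(\rho^\beta)\sim|x|^{2\beta-2}\notin L^\infty$. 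The inequality removes only one of the two negative powers, so no choice of constants rescues the $L^\infty$ bound. The paper avoids this because the quantity it must control is not $\|\partial^\alpha(\rho^\beta)\|_\infty$ but $\|d^kg_2(\rho_\g)\prod\rho_{J^{s_i}\g}\cdot\k\|_{I^2}^2=\int|\cdots|^2\rho_\g(x)\,dx$ (see \eqref{dg-est1}, \eqref{d2g-est1}): the extra factor $\rho_\g(x)=\int|\k(x,y)|^2dy$ supplied by the half-density, combined with the pointwise Cauchy--Schwarz \eqref{rhoJ-est1} $|\rho_{J\g}|^2\le 2\rho_{J\k^*J\k}\,\rho_{\k^*\k}$, raises the exponent to $\rho^{2\beta-k+1}\ge\rho^0$ and makes the integrand regular for $\beta\ge(k-1)/2$. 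You could try to recover this factor in your framework by splitting $\|f\,\g\langle J\rangle^b\|_{I^1}\le\|f\g^{1/2}\|_{I^2}\|\g^{1/2}\langle J\rangle^b\|_{I^2}$ and using $\|f\g^{1/2}\|_{I^2}^2=\int|f|^2\rho_\g$, but at that point you are re-deriving the paper's $\k$-level computation with $\g^{1/2}$ in the role of $\k$; as written, with everything routed through $\|t^{|\alpha|}\partial^\alpha m\|_{L^\infty}$, Step 3 fails for the xc term whenever $\beta<1$ and $b\ge2$.
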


Theorem \ref{thm:GWP-Scatt} follows from Theorem \ref{thm:GWP-ST-ka}  and Proposition \ref{prop:ka-to-gam} formulated in Section \ref{SecStra} below.
%We prove this theorem at the end of Section \ref{SecStra}.
%as a consequence of estimates in local norms and 
Statement (i) in Theorem \ref{thm:GWP-Scatt} is not new, and can be obtained under milder assumptions
on the data. %(and without self-adjointness), 
%for finite energy solutions. %\comment{Check literature...}
The main new result is the scattering property, (ii). 
Importantly, our results are given in the natural (weighted) trace norms.

The class of self-interactions that we actually treat is larger than \eqref{SI1}--\eqref{SI2},
see Remark \ref{remSI} below.

For convolution potentials that have stronger integrability properties and no exchange terms, %with higher homogeneity $\beta$, 
scattering results
% are proven in \comment{References?} %and they 
also follow from the cited works \cite{LewSab2,ChPavl,ChPavl2}.
Here we are able to cover the full subcritical range
for the convolution part, and the almost full subcritical range for the xc term. 
%(with the exception of the range $1/d < \beta < [d/2]/2$).
As a byproduct of our proof, we obtain that the solutions given in Theorem \ref{thm:GWP-Scatt} 
also enjoy the local decay estimate \eqref{loc-dec'}.

In view of our analysis it is natural to formulate the following conjectures.

%\subsection{Scattering}\label{sec:scat} 
%\medskip
%{\bf (moved from Sect \ref{SecProperties})} 
  
\DETAILS{ In the long-range case, we replace \eqref{SRscat} by something like 
 \begin{align}\label{LRscat}& 
 \| \g(t) - U_{\rm gauge} U_{t} f U_{t}^{-1} U_{\rm gauge}^{-1} \|_{I^1}\ra 0,
 \end{align}   
as $t\ra \infty$, where $U_{t}$ is defined after \eqref{nonlin-scal} ($U_{\lam}: \psi(x) \ra \lam^{d}\psi(\lam x)$) and $f$ is a time-independent operator satisfying a certain nonlinear equation.}

\begin{conj}[Exponent $\beta$]\label{Conj1}
The range of exponents $\beta$ in \eqref{SI2}
for which short-range scattering holds is \[\beta > 1/d.\]
\end{conj}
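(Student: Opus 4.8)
The exponent $\beta=1/d$ in Conjecture \ref{Conj1} is the classical short-range threshold, and a proof should split into a \emph{sufficiency} half ($\beta>1/d$ implies short-range scattering) and a \emph{necessity} half ($\beta\le1/d$ implies long-range behaviour). The sufficiency half is the substantive one, and it ought to follow by pushing the scheme behind Theorem \ref{thm:GWP-Scatt} to its natural scaling limit. The heuristic is the usual decay count: from the dispersive bound $\|e^{it\Delta}\psi\|_{L^\infty}\lesssim|t|^{-d/2}\|\psi\|_{L^1}$ together with the weighted assumption \eqref{g_0assumption} and $b>d/2$, the free flow obeys $\|\rho_{e^{it\Delta}\gamma e^{-it\Delta}}\|_{L^\infty}\lesssim|t|^{-d}\,\|\jx^b\gamma\jx^b\|_{I^1}$, so the exchange term has size $\|\rho^\beta\|_{L^\infty}\lesssim|t|^{-d\beta}$; since $\tr\gamma(t)$ is conserved, the rate of change of $e^{-it\Delta}\gamma(t)e^{it\Delta}$ in $I^1$ is $\|[\lambda_2\rho^\beta(t)+\lambda_1 v\ast\rho(t),\gamma(t)]\|_{I^1}\lesssim\big(|t|^{-d\beta}+|t|^{-d/r}\big)\tr\gamma_0$, time-integrable precisely when $d\beta>1$ (the convolution part being automatically short-range for $r<d$). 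This already shows what the bootstrap must achieve: the sharp decay $\|\rho_{\gamma(t)}\|_{L^\infty}\lesssim\langle t\rangle^{-d}$ along the \emph{nonlinear} flow.

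Concretely I would: (i) sharpen the non-abelian Klainerman--Sobolev inequality so that control of a fixed number of the commutator vector fields $\mathrm{ad}_{j_\ell}$ attached to the Schr\"odinger vector fields $j_\ell=x_\ell+2it\,\partial_\ell$, applied to $\gamma$, yields $\|\rho_{\gamma(t)}\|_{L^\infty}\lesssim\langle t\rangle^{-d}$ with no $\epsilon$-loss; (ii) propagate the $j$-weighted trace norms $\|\prod_\ell(\mathrm{ad}_{j_\ell})^{a_\ell}\gamma(t)\|_{I^1}$, $\sum_\ell a_\ell\le b$, along the Duhamel formula, using the covariance of the $j_\ell$ under conjugation by $e^{it\Delta}$ and the smallness of $\lambda_1,\lambda_2$ to absorb the nonlinear feedback; (iii) conclude, as in the heuristic, that $e^{-it\Delta}\gamma(t)e^{it\Delta}$ is Cauchy in $I^1$, producing the limit $\gamma_\infty$ in \eqref{SRscat}. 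For the necessity half, one specialises \eqref{KS} to a rank-one datum, where it reduces to the scalar Hartree-plus-power equation $i\partial_t\psi=-\Delta\psi+\lambda_1(v\ast|\psi|^2)\psi+\lambda_2|\psi|^{2\beta}\psi$, and invokes the classical non-existence of nontrivial wave operators in the long-range regime $\beta\le1/d$ (the Hartree term being short-range because $r<d$).

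The main obstacle is the low regularity of the exchange nonlinearity when $\beta\in(1/d,1/2)$: the map $\rho\mapsto\rho^\beta$ is then only H\"older, not $C^1$, and the $\mathrm{ad}_{j_\ell}$ act on the diagonal as genuine derivatives, generating factors $\rho^{\beta-k}(\nabla\rho)^{k}$ that are singular where $\rho$ degenerates. The present restriction $\beta>1/\min(d,2)$ is exactly the smoothness needed to hit $\rho^\beta$ with the full hierarchy of $b>d/2$ vector fields, and it strictly exceeds $1/d$ once $d\ge3$. To reach $\beta>1/d$ one cannot afford that many derivatives on the nonlinearity; I expect the remedy is a hybrid scheme that extracts the single factor $|t|^{-d}$ of the density from one vector field and carries the remaining regularity in Strichartz- or $L^p_tL^q_x$-type norms for the operator flow --- a non-abelian analogue of the endpoint small-data scattering arguments for low-power NLS --- applied to $\rho^\beta$ with at most one derivative. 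Getting such estimates to coexist with the commutator structure inside the natural trace ideals is the crux; the endpoint $\beta=1/d$ itself should be genuinely critical (a logarithmic divergence) and is rightly excluded from the conjectured range.
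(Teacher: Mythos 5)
This statement is Conjecture \ref{Conj1}: the paper does not prove it, and in fact proves scattering only for $\beta>1/\min(d,2)$, which coincides with the conjectured threshold $\beta>1/d$ only in dimensions $d=1,2$. Your proposal is therefore not being measured against a proof in the paper but against an open problem, and as written it does not close it. The heuristic decay count in your first paragraph is correct and is essentially the paper's own ``scattering criticality'' computation (Section \ref{SecProperties}): $\|g(f_t)\|_\infty\sim t^{-d\beta}$ is integrable iff $\beta>1/d$. You also correctly diagnose the obstruction: for $d\ge3$ and $\beta\in(1/d,1/2)$ the map $\rho\mapsto\rho^\beta$ is only H\"older, and hitting it with the $b>d/2$ commutator vector fields produces factors $\rho^{\beta-k}$ that the paper's energy scheme (Lemma \ref{lem:energy-est}, which needs $\beta\ge1/2$ for $d=3$ via the pointwise bound \eqref{rhoJ-est1}) cannot absorb. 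But your proposed remedy --- a hybrid scheme putting at most one vector field on the nonlinearity and carrying the rest in Strichartz-type norms for the operator flow --- is exactly the step you concede is ``the crux,'' and no estimate is supplied that would make it work inside the trace ideals. So the sufficiency half for $d\ge3$ remains a gap, not a proof.

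Two smaller points. First, the necessity half via the rank-one reduction to the scalar equation $i\partial_t\psi=-\Delta\psi+\lambda_2|\psi|^{2\beta}\psi$ and Barab--Strauss-type non-existence of wave operators for $2\beta\le 2/d$ is a legitimate and probably completable argument (the paper itself notes that rank-one data reduce \eqref{KS} to generalized NLS), but it only shows failure of scattering for \emph{some} data in the class, which is what the conjecture implicitly asserts; you should say this explicitly rather than treat it as automatic. Second, your steps (i)--(iii) for the regime $\beta\ge 1/2$ are not new: they are a paraphrase of the paper's actual proof (Propositions \ref{prop:tildekap-est} and \ref{prop:AprioriBnds} and the Cauchy argument in Section \ref{sec:loc-exist}), carried out there at the level of the half-density $\k$ rather than $\gamma$ itself; working with $\gamma$ directly in $I^1$ as you suggest would additionally require justifying the Leibniz-type splittings $\rho_{J\gamma}=\rho_{(J\k)^*\k}+\rho_{\k^*J\k}$ that the paper gets for free from $\gamma=\k^*\k$.
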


In this respect our result is sharp in dimensions $1$ and $2$, see \eqref{SI1}, %\eqref{g-condbeta},
while it is not optimal for $d=3$ (and $d>3$, see Remark \ref{rem:d>3}).
%This is due to some technical difficulties which we intend to address in the future. 

\begin{conj}[Scattering critical case]\label{Conj2}
%We conjecture that {\it nonlinear modified scattering}
%will occur in the case \[g_1(\rho) = {|x|}^{-1} \ast \rho \quad \mbox{and} \quad g_2(\rho) = \rho^{1/d}.\]
For \eqref{SI1} with $v=|x|^{-\alpha}$,
%\[g_1(\rho) = {|x|}^{-1} \ast \rho \quad \mbox{and} \quad g_2(\rho) = \rho^{1/d}.\]
\[g(\rho) = \lambda_1 {|x|}^{-\alpha} \ast \rho + \lambda_2 \rho^\beta 
  \quad \mbox{with} \quad 0<\alpha \leq 1, \quad 0<\beta \leq 1/d,\]
modified scattering holds.
%By analogy with the case of the scalar NLS, see for example \cite{HN,KP}, 
%we anticipate that this nonlinear asymptotic behavior will take the form
In particular, for $\alpha=1$ and $\beta=1/d$, we expect that
\begin{align*}%\label{modscatt}
{\big\| \g(t) -
  e^{-i(-t\Delta +g_\infty(-i\nabla) \, \log t)} \g_\infty e^{i(-t\Delta +g_\infty(-i\nabla) \, \log t)}
  \big\|}_{I^1} \ra 0,
%\| \g(t) - U_{\rm gauge} U_{t} f U_{t}^{-1} U_{\rm gauge}^{-1} \|_{I_1}\ra 0,
\end{align*} 
as $t\ra \infty$ (with some algebraic rate), where %$\al_t(\g):= e^{-i(-\Delta t +g_\infty(-i\nabla) \, \log t) } \g e^{i(-\Delta t +g_\infty(-i\nabla) \, \log t)}$
%$U_{t}$ is defined after \eqref{nonlin-scal}  ($U_{\lam}: \psi(x) \ra \lam^{d}\psi(\lam x)$) 
%and $f$ is a time-independent operator satisfying a certain nonlinear equation.
$\g_\infty \in I^1$ is time independent, and $g_\infty$ is a time-independent operator
which depends nonlinearly on $\g_\infty$.

%Should be $g_\infty = g(\rho(\gamma_\infty))$.

\end{conj}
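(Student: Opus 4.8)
Since this is a conjecture (Conjecture \ref{Conj2}) rather than a theorem with a supplied proof, what follows is a proposal for how one would \emph{attack} it, building on the commutator/vector-field machinery developed for Theorem \ref{thm:GWP-Scatt}. The starting point is the observation that in the critical long-range regime $\alpha=1$, $\beta=1/d$ the nonlinearity $g(\rho)=\lambda_1|x|^{-1}\ast\rho+\lambda_2\rho^{1/d}$ has exactly the borderline homogeneity: under the scaling $\g\mapsto U_\mu \g U_\mu^{-1}$ with $U_\mu:\psi(x)\mapsto \mu^{d/2}\psi(\mu x)$, one computes that $h(\rho)=-\Delta+g(\rho)$ rescales so that the self-interaction term decays like $t^{-1}$ along the dispersive flow rather than being integrable in $t$. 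This is the familiar long-range obstruction: the free evolution $e^{it\Delta}$ concentrates mass of a localized state near $|x|\sim t$, where $\rho_{\g(t)}(x)\sim t^{-d}\rho_\infty(x/t)$, so $|x|^{-1}\ast\rho\sim t^{-1}$ and $\rho^{1/d}\sim t^{-1}$; integrating $t^{-1}$ produces the $\log t$ phase.

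The plan is as follows. \emph{First}, set up the modified asymptotic dynamics. Conjugating \eqref{KS} by the free flow, write $\g(t)=e^{it\Delta}\widetilde\g(t)e^{-it\Delta}$; then $\partial_t\widetilde\g = i[e^{-it\Delta}g(\rho_{\g})e^{it\Delta},\widetilde\g]$. On the region $|x|\sim t$, the operator $e^{-it\Delta}g(\rho)e^{it\Delta}$ should be approximated, via stationary phase, by a multiplication operator in the \emph{momentum} variable: $g(\rho_{\g(t)})(x)\big|_{x\sim 2t\xi}$ becomes, after conjugation, approximately $g_\infty(-i\nabla)/t$, where $g_\infty$ is determined self-consistently by the asymptotic profile $\g_\infty$ through the pushforward of its density under the classical flow $x=2t\xi$. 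One then defines the modified free dynamics $V(t):=e^{-i(-t\Delta+g_\infty(-i\nabla)\log t)}$ and seeks to show $\|\g(t)-V(t)\g_\infty V(t)^{-1}\|_{I^1}\to 0$. \emph{Second}, derive the a priori bounds. One would need a version of the weighted trace-norm estimates of Theorem \ref{thm:GWP-ST-ka} adapted to the modified flow: the commutator vector fields $J_\ell(t)$ must be replaced by their conjugates $V(t)\,\cdot\,V(t)^{-1}$-twisted analogues, which differ from the straight $x_\ell+2it\partial_\ell$ by lower-order terms coming from $\nabla_\xi g_\infty(\xi)\log t$; controlling these requires $g_\infty$ to be Lipschitz in $\xi$, which in turn forces integrability/smoothness hypotheses on $\g_\infty$ (hence the assumption $\g_\infty\in I^1$, likely strengthened to weighted $I^1$). \emph{Third}, run the bootstrap/contraction exactly as in the short-range case but now in the norm measuring the distance to the modified flow; the nonlinear remainder, after subtracting the resonant $t^{-1}g_\infty(-i\nabla)$ piece, should be genuinely integrable in $t$ (decaying like $t^{-1-\delta}$), yielding convergence with an algebraic rate.

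The \emph{main obstacle} is the self-consistency of $g_\infty$ and its regularity. Unlike the scalar NLS case, where the asymptotic phase is $|\widehat{u_\infty}(\xi)|^2/t$ (an honest function), here $g_\infty(-i\nabla)$ must be built from $\rho_{\g_\infty}$ pushed forward under the dispersive flow, and for the Hartree part $|x|^{-1}\ast\rho$ this involves a Riesz-type potential of a measure that is only as regular as $\rho_{\g_\infty}$ allows; for the exchange part $\rho^{1/d}$ the fractional power is non-smooth at zeros of the density. Establishing that $g_\infty$ is $C^1$ (or even just log-Lipschitz) in $\xi$, uniformly along the iteration, and that the map $\g_\infty\mapsto g_\infty$ is a contraction on the relevant weighted space, is the crux. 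A secondary difficulty is purely algebraic: because the equation is non-abelian, the phase correction $V(t)\g_\infty V(t)^{-1}$ does \emph{not} commute through the commutator $[h(\rho),\cdot]$ as cleanly as a scalar phase does in NLS, so one must track the operator-ordering error $[g_\infty(-i\nabla),\g_\infty]\log t$ and show it is absorbed — this is where the non-abelian Klainerman–Sobolev inequalities from the short-range proof would have to be upgraded to handle the slowly-growing weight $\log t$. I expect the regularity of $g_\infty$ to be the genuine bottleneck; everything else is a (substantial but routine) adaptation of the machinery already in the paper.
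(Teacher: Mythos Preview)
The paper states Conjecture~\ref{Conj2} as an open problem and provides no proof or proof sketch for it; there is nothing in the paper to compare your proposal against. You correctly recognize this and offer a heuristic attack plan rather than a claimed proof.

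As a strategy, your outline is reasonable and follows the standard modified-scattering template from the scalar NLS/Hartree literature (e.g.\ \cite{HN,KP}): extract a resonant $t^{-1}$ piece from the conjugated nonlinearity via stationary phase, absorb it into a logarithmic phase correction $V(t)$, and show the remainder is integrable. The obstacles you flag---the self-consistent determination and regularity of $g_\infty$ (especially for the non-smooth exchange term $\rho^{1/d}$), and the non-abelian commutator errors arising because the phase correction does not commute cleanly with $[h(\rho),\cdot]$---are genuine and are precisely why the paper leaves this as a conjecture rather than a theorem.
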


%{\bf 
We %remark that in this paper we deal with  finite-trace density operators   (i.e. states with finite average numbers of particles) and with gauge-  and translation-invariant equations. This brings up 
also mention two further natural and important directions for extending our results:
density operators with infinite traces (i.e. states with infinite average numbers of particles), 
and systems without translation or/and gauge symmetry. 
Concerning the first direction, one could approximate - in a controllable way -
infinite-trace density operators by finite-trace ones,  as it is done for the nonlinear Schr\"odinger equation %in rough, infinite energy spaces. 
with either the $L^2$-norm or the energy infinite. 
For the second direction, we believe our techniques could be adapted to deal with external potentials, 
at least with repulsive ones, at first. 
Moreover, translation invariance can often be restored by `giving external potentials dynamics', 
i.e. by considering translation invariant %It could be also interesting to look at %
multicomponent systems consisting of different kinds of particles interacting with each other. 

%To put it generally, one could either approximate systems without a desirable property by ones which possess it 
%or lift them to such systems.
%} 

%We also remark that studying small localized solutions...

%One could say that the situation here is parallel to many problems in nonlinear PDE
%where, for instance, a large body of work on finite energy solutions was used 
%later as a foundation for studying %proving the GWP for 
%rough, infinite energy ones, or where the study of small and localized solutions 
%is of fundamental importance in dealing with the stability of non-vacuum states. 

%One could also attempt to look at stability of small stationary solutions.

%\[\k \quad \longrightarrow \quad 
%  e^{it\Delta} e^{-ig_\infty(-i\nabla) \, \log t } w_\infty e^{ig_\infty(-i\nabla) \, \log t} e^{-it\Delta}.\]

%The proof of (ii) is a consequence of Proposition \ref{prop:passing-HS} which gives the equation \eqref{KS-k} for $\k$, with $\gamma = \k^\ast \k$, Proposition \ref{LD->ST}, and the local decay property for $\k$ stated in Theorem \ref{thm:loc-dec-ka}.

Unlike most of the previous research on the Hartree, Hartree-Fock and Kohn-Sham equations, 
which uses the formulation of the equations in terms of the eigenfunctions of $\g$, 
we deal with the operator $\g$ directly. 
There are three basic ingredients in our approach: 

\setlength{\leftmargini}{2em}
\begin{itemize}

\item[(i)] Passing to the Hilbert space
$I^2$  of Hilbert-Schmidt operators with the inner product
\begin{align}\label{HSinner}
\lan \k,  \k' \ran_{I^2}:=\tr (\k^* \k').
\end{align}
by going from $\g$ to, roughly speaking, $\k:=\sqrt \g$;

%\newpage

\item[(ii)] Deriving almost conservation laws for non-abelian analogues of  weighted Sobolev spaces 
\begin{align}%\label{LOC1} H^s &:= \big\{ \k \in HS\, : \, \sum_{|\alpha| \leq s} \| D^\alpha \k  \|_{HS} < \infty \big\}, \\
\label{LOC2}
W^s &:= \big\{ \k \in I^2\, : \,  \sum_{|\alpha| \leq s} \| J^\alpha \k \|_{I^2} < \infty \big\},
\end{align}
% for any positive  $s$, 
 based on the space $I^2$ %, see \eqref{defIr} of Hilbert-Schmidt operators
with the smoothness grading provided by operators 
\begin{align}\label{J'}
J_\ell \k & := [j_\ell ,\k], \quad \qquad j_\ell:=x_\ell-2 p_\ell t, \qquad p_\ell:=- i \p_\ell.
\end{align}
Here, as usual, $J^\alpha:=\prod_i J^{\al_i}_{\ell_i}$ for $\alpha:=(\al_i)$. Note that $W^0 =  I^2$.
Since $J_\ell$ is self-adjoint on (a dense subset of) $I^2$, one can define $J_\ell^r$,  
for general non-integer $r$, by the operator calculus.
In this paper, however, we will only use the spaces spaces $W^s$ with integer $s$.

\item[(iii)] Using a new class of local norms for Hilbert-Schmidt operators,
and establishing a non-commutative version of Gagliardo-Nirenberg-Klainerman-type estimates
which yield bounds on these local norms of $\k$, and eventually imply the desired estimates on $\g$.
\end{itemize}
%More precisely, let

%\subsection{Properties of the DFE}\label{sec:gHFeq-prop}

\DETAILS{The DFE has the following properties:

\begin{itemize}

\item  %Translational, rotational and 
Galilean invariance; %symmetries;

\item Gauge  invariance of the nonlinearity;

%\item Scaling covariance, if $v$ is homogeneous, as in the Coulomb/Newton or local cases, and $ex=0$; 

\item Conservation of energy and number of particles; 

\item Preservation of positivity;

% \item Static solutions. % (it is expected that \eqref{KMCh} has a unique static solution for $d=2$ and no static solutions for $d=3$).
 
\item Hamiltonian structure;

\item Scattering dimension;

\item Translationally invariant static solutions.  
 
\end{itemize}
}

\medskip
\begin{remark}[Class of nonlinearities]\label{remSI}
We can treat a wider class of self-interactions than \eqref{SI1}-\eqref{SI2}:
%More precisely, by letting
\begin{align}\label{g1g2}
f(\gamma) = g(\rho_\g) %= \lambda g(\rho_\gamma) 
	= \lambda_1 g_1(\rho) + \lambda_2 g_2(\rho)
\end{align}
with $\lambda_1,\lambda_2 \in \R$ and the following assumptions on $g_1$ and $g_2$: %, we can work under the following more general assumptions:

%We assume %throughout that initial data, $\g_0$, are positive, $\g_0\ge 0$, trace class with the weight $\jx^d$, and  that  \begin{align}\label{SI0} f(\g)= g(\rho_\g) = \lambda_1 g_1(\rho)  + \lambda_2 g_2(\rho).  \end{align} for $\lambda_1,\lambda_2 \in \R$.

\setlength{\leftmargini}{2em}
%\begin{itemize}\item[(i)] ({\it Assumptions on $g_1$}) $g_1$ satisfies
\begin{align}\label{g-cond0}
& \| g_1 (\rho)\|_\infty  \lesssim \| \rho\|_q^a, %\mbox{for some $q,a\geq 1$ s.t.} \quad  
\quad \mbox{for some} \quad a\geq 1 \quad \mbox{with} \quad  a(1 -1/q) > 1/d,
\end{align}
%Can eliminate the parameter $a$ and just separately discuss in the comments the case $d=1$
and its G\^ateaux derivatives\footnote{The 
$k$-th G\^ateaux derivative could be defined by induction either as
\[d^k  g_1 (\rho) \xi_1 \xi_2 \dots \xi_k:= d(d^{k-1} g_1 (\rho) \xi_1 \xi_2 \dots \xi_{k-1})\xi_k,\] 
or 
\[d^k  g_1 (\rho) \xi_1 \xi_2 \dots \xi_k:= \prod_1^k\p_{s_j}|_{s_i=0\forall i}g(\rho+\sum_1^k s_j \xi_j).\]} 
satisfy
\begin{align}
\label{g-cond1}
& \|dg_1 (\rho) \xi\|_p  \lesssim \| \xi\|_q,   &  %1+1/p -1/q>1/d,
\\ %\text{ {\bf and for $\rho$ %and $p$ and $q$ satisfying ??}}
\label{g-cond2}
& \|d^2g_1 (\rho) \xi\eta\|_p  \lesssim  \| \xi\|_{q'}\| \eta\|_{q'}, &   %2+1/p -2/{q'}>1/d,
\end{align}
for some indexes $(p,q,q')$ with\footnote{There is no restriction on $q$ if $d=1,2$.} 
\begin{align}\label{pqcond1}
& p \geq d, \qquad q \leq \frac{d}{d-2}, \qquad 1\leq q' \leq 2,
\\
\label{pqcond}
&  1+1/p -1/q>1/d, \qquad 2+1/p -2/{q'}>1/d;
\end{align}
\medskip
%\item[(ii)] ({\it Assumptions on $g_2$})  For bounded $\rho$, $g_2$ satisfies
and, for bounded $\rho$, 
\begin{align}
\label{g-cond0'}
& |g_2 (\rho)| \lesssim  \rho^\beta,\\ %\qquad \beta>1/d
%\end{align}\begin{align}
\label{g-cond1'}
& |d^k g_2 (\rho)| \lesssim  \rho^{\beta-k}, \qquad k=1,2,
\end{align}
where the power $\beta$ satisfies (notice the difference with \eqref{SI2})
\begin{align}
\label{g-condbeta}
\left\{ \begin{array}{lr}
\beta \geq 1/2, \quad & d=3
\\
\beta > 1/2, & d=2
\\
\beta > 1, & d=1.
\end{array}\right.
\end{align}
%and its derivatives satisfy
%\end{itemize}

\end{remark}

\begin{remark}[Higher dimensions]\label{rem:d>3}
For $d\ge4$, we have to add conditions on the higher (G\^ateaux) derivatives of $g$.
For example, the natural generalization of \eqref{g-cond1}-\eqref{g-cond2} 
with \eqref{pqcond} would be the following assumption: there exist $(p,q,q_1,\dots,q_k)$ such that
\begin{align*}%\label{g-cond3}
& \|d^k  g_1 (\rho) \xi_1 \xi_2 \dots \xi_k \|_p  
	\lesssim \prod_{i=1}^k \| \xi_i \|_{q_i}, \qquad k+ 1/p -1/{q_1} \dots - 1/q_k>1/d,
\end{align*}
for all $k\le [d/2]+1$.

For $g_2$ one would instead assume \eqref{g-cond0'} and \eqref{g-cond1'} for all $k\le [d/2]+1$,
As for the restriction analogous to \eqref{g-condbeta}, 
our current argument would require $\beta \geq (1/2)[d/2]$; see also Remark \ref{rem:d>3'}.

%For the sake of simplicity we will not consider that case $d \geq 4$, but it will be clear to the reader how to 
%adapt our proof to these cases.
\end{remark}

%\begin{remark}[Examples of admissible functions $g$]
%Some example of admissible functions $g=g(\rho)$ satisfying the assumptions \eqref{g1g2}--\eqref{g-cond1'} are:
%
%\setlength{\leftmargini}{1em}
%\begin{itemize}
%\item $g (\rho) = v \ast \rho$ with $\|v \|_{L^r}$ sufficiently small for some $1<r< d$. 
%%then it satisfies \eqref{g-cond0}-\eqref{g-cond2}.
%
%\item For $d\geq 2$ we can consider
%\begin{align}
%g(\rho) = |x|^{-\beta} \ast \rho, \qquad \beta \in (1,d).
%\end{align}
%We leave open the Coulomb case $\beta = 1$.
%
%\item We can consider the local (or delta pair) interactions 
%\begin{align}
%g(\rho) = \rho^p, \quad p \in \{1\} \cup [2,\infty)
%\end{align}
%in any dimension, with the exception of the case $p=1$ in dimension $1$. %Do separately?
%In particular, when $p=1$ we recover the result of \cite{ChPavl}
%in the case of small data, and prove time-decay, bounds on local norms, and scattering in this case.
%\end{itemize}
%\end{remark}

\medskip  
\begin{remark}[Non-self-adjoint extension]\label{rem:non-sa-gam} %\label{sec:nsa-gam}
We considered \eqref{KS} on self-adjoint operators.  
 By extending $f(\g)$ to non-self-adjoint operators, we can extend \eqref{KS} to  non-self-adjoint $\g$'s.  
 Then, assuming $f(\g^*)=f(\g)^*$ (or $g(\bar\rho)=\bar g(\rho)$) 
 and extending condition \eqref{g-cond0} on $g$ appropriately, 
 we can show that \begin{align}\label{adj-prop}\al_t(\g^*)=\al_t(\g)^*,\end{align} 
 where $\al_t(\g_0):=\g(t)$, the solution to  \eqref{KS} with the initial condition. $\g(t=0)=\g_0$, 
 and, in particular, $\g_0^*=\g_0 \Longrightarrow \g(t)^*=\g(t)$.
%Work on the critical ($r=d$) and supercritical ($r>d$) cases is in progress. 
\end{remark}

\medskip  
\begin{remark}
In the context of the Schr\"odinger evolution,
the operators $j_t:=x-2 p t,\  p:=- i \n$, have been used in several works to obtain a priori estimates on solutions,
see for example \cite{HN,KP,Sig,SigSof}.
\end{remark}  
\medskip  
\begin{remark}
The operators $j_t:=x-2 p t$ are the generators of the Galilean boost 
$U_{v, t}: \psi(x, t) \ra e^{i( v\cdot x -|v|^2 t)}\psi(x-2 vt, t)$, which can be written as %{\bf(check)}
\begin{align}\label{Uv} U_{v, t}: = e^{i(v\cdot x - |v|^2 t)}e^{- 2v\cdot \n t} =e^{i v\cdot (x -2p t)}=e^{i v\cdot j_t}.
\end{align}
(The second equality above follows from the Baker-Campbell-Hausdorff formula.)
This is lifted to a space of operators as
\begin{align}\label{gal-boost}
\k \ra U_{v, t}\k U_{v, t}^{-1} = e^{i  v\cdot J_t}\k.
\end{align}
\end{remark}

%\cite{CSS} for conceptual relation ?

The paper is organized as follows.
In Section \ref{SecProperties} we recall several properties of \eqref{KS}
and give a definition of scattering criticality. 
%We also state conjectures about 
%optimal exponents for the exchange correlation terms and nonlinear modified asymptotics in the critical case.
In Section \ref{SecStra} we present our general strategy:
we introduce a ``half-density'' $\k$, 
such that $\kappa^\ast\k = \g$ and derive an equation for it;
we then state our main results concerning $\k$ and show how these imply Theorem \ref{thm:GWP-Scatt}.
Section \ref{sec:Pf-tildekap-est} contains the proof of a %main
non-abelian version of a Gagliardo-Nirenberg-Klainerman-type inequality,
and Section \ref{sec:den-est} some simple estimates on densities.
In Section \ref{sec:approx-conservq} we estimate the evolution of the weighted energy,
and then use this in Section \ref{sec:ener-ineq-pf} to prove the main a priori bounds for the 
weighted norms of $\k$, see \eqref{LOC2}.
Finally, in Section \ref{sec:loc-exist} we prove a local existence result and continuity criterion for $\k$,
and combine it with the a priori weighted bounds to complete the proof of global well-posedness and scattering.

\medskip
\noindent
{\bf Acknowledgement}. 
The authors are grateful to Ilias Chenn, St\'ephane Nonnenmacher, 
Benjamin Schlein and Heinz Siedentop for useful discussions.
%We also thank the anonymous referee for helpful comments.
We are grateful to the referee for helpful comments, and for emphasizing the importance of the
problems discussed after Conjecture \ref{Conj2}.
The work on this paper was supported in part by NSERC Grant No. NA7901 (IMS), 
by a start-up grant from the University of Toronto and NSERC Grant No. 06487 (FP).
%None of the authors has a conflict of interest.  

\bigskip
\section{Properties of KSE}\label{SecProperties}

%We conclude this introduction by listing 
%We list several important properties of \eqref{KS}. 
%and explain how they are conceptually relevant for our results and the proof.

\medskip
\paragraph{\bf Symmetries and conserved quantities.} %For symmetries, see Appendix \ref{sec:sym}.

The equation \eqref{KS} is invariant under the  {\it translation} and {\it rotation} transformations,  \begin{equation}\label{tr-rot-transf}
    T^{\rm trans}_h :  \g \mapsto U_h\g U_h^{-1}\ \text{ and }\ T^{\rm rot}_\rho : \g \mapsto U_\rho\g U_\rho^{-1} ,\end{equation}
 for any $h \in \R^d$ and
%\begin{equation}\label{rot-transf'}\end{equation}
  any $\rho \in O(d)$. %(including  the reflections $\cR: f(x) \mapsto f(-x)$)
%but with  $T^{\rm rot}_R,\ T^{\rm trans}_s$ acting only on the variable  $w:=\frac\h2 (x+y)$. 
Here $U_h$ and $U_\rho$ are the standard translation and rotation transforms $U_h :    \phi(x) \mapsto \phi(x + h)$ and $U_\rho : \phi(x)  \mapsto  \phi(\rho^{-1}x)$.

 Note that \eqref{KS} has no gauge symmetry, unless it is coupled to the Maxwell equations.

The conserved energy and the number of particles are given by 
\begin{align}\label{H-en}
& E(\g) := \tr (h\g)+ G(\rho_\g),\\  %\tr ((h+ \frac12 v* \rho_\g)\g),\\ % + Ex(\g),\\
\label{numb}
&       N(\g) :=\tr \g=\int \rho_\g,
\end{align}
where $h:= -\Delta $ and, recall, $\rho_\g(x) :=\g(x, x)$ and $G(\rho)$ is an anti-$L^2-$gradient of $g(\rho)$, i.e. $d_\g G(\rho_\g)\xi =\Tr(g(\rho_\g)\xi)$. % and $Ex(\g)$ is the `antiderivative' of $ex(\g)$, e.g. for the HFE,  $Ex(\g):= -\frac12 \Tr (\g v\# \g)$. Note that 
%The second term on the right-hand side of \eqref{H-en}
 $G(\rho)$  is the energy due to direct electrostatic self-interaction of the charge distribution $\rho_\g$ and the exchange-correlation energy, see \eqref{g-ex1}. 
If $g(\rho)=v* \rho+\lam \rho^\beta$, then %$G(\rho)= \frac12 \tr ((v* \rho_\g)\g)$ and 
\begin{align*}
G(\rho_\g) &= \frac12 \tr ((v* \rho_\g)\g)+ \frac{\lam}{\beta+1}\int \rho^{\beta+1}
  = \frac12 \int\rho_\g v* \rho_\g dx+ \frac{\lam}{\beta+1}\int \rho^{\beta+1}.
\end{align*} 
%= \frac12 \int\int \rho_\g(x) v(x-y)\rho_\g(y) dx  dy.
%\[\Tr (\g v\# \g)=\int\int v(x-y)|\g (x, y)|^2 dx  dy.\] %, and the number of particles for (HFE) is given by

%Remark on entropy

\medskip
\paragraph{\bf Hamiltonian structure.} %Finally, we note that 
  \eqref{KS} is a Hamiltonian system with the Hamiltonian given by the energy \eqref{H-en} and the Poisson bracket generated by the operator $J=J(\g): A\ra  i [A, \g]$ so that \eqref{KS} can be rewritten as 
\begin{align}\label{gHF-ham}
   \frac{\p \g}{\p t} =  J(\g)\n E(\g)
\end{align}
where $\n E(\g)$ is %the $L^2-$gradient of $E(\g)$, 
defined by $d E(\g)\xi= \Tr(\xi\n E(\g))$ with $d E(\g)$ being the G\^ateaux derivative of $E$.

%\paragraph{$C^*-$property.} %{Adjoint flow.} %\begin{remark} \label{rem:non-sa-gam}Non-self-adjoint extension
% By extending $f(\g)$ to non-self-adjoint operators, we can extend \eqref{KS} to  non-self-adjoint $\g$'s.  Then, assuming $f(\g^*)=f(\g)^*$ (or $g(\bar\rho)=\bar g(\rho)$) and extending condition \eqref{g-cond0} on $g$ appropriately, we can show that \begin{align}\label{adj-prop}\al_t(\g^*)=\al_t(\g)^*,\end{align} where $\al_t(\g):=\g(t)$, the solution to  \eqref{KS} with the i.c. $\g(t=0)=\g_0$, and, in particular, $\g_0^*=\g_0 \Longrightarrow \g(t)^*=\g(t)$. %\end{remark}

% \paragraph{Remark.}  
%1) To extend the converse direction to $\g_0\ge 0$, one can try to reduce \eqref{HE} to an equivalent equation on $\null\g^\perp$ but this looks tough. 1) Property \eqref{adj-prop-ka} implies that $\g_0^*=\g_0 \Longrightarrow \g(t)^*=\g(t)$.

\medskip
\paragraph{\bf Scattering criticality.} 
Consider the self-consistent hamiltonian %\eqref{hgam-gen}. 
\begin{align}\label{hgam-gen} %h_{\g}:=-\Delta + g(\rho_\g),\ \text{ or, }\ 
h(\rho):=-\Delta + g(\rho),\end{align}
%We consider in particular $v(x)=\lam/|x|$ (the Coulomb or Newton potential) and $v(x):=|x|^{-\al}$. 
Let $f_t(x):=t^{-d} f(\frac xt)$ . We say that $g(\rho)$ is {\it short range} ({\it scattering subcritical}) if and only if 
\begin{align}\label{scattsub}
\int_1^\infty \| g(f_t)\|_\infty dt <\infty 
\end{align}
and {\it long-range} ({\it scattering critical or supercritical}) otherwise.

%As an example,  $g(\rho) =v *\rho $,  with $v(x):=|x|^{-\al}$, and $g(\rho) =\lam\rho^\beta$ are scattering subcritical iff $\al>1$ and $\beta > 1/d$.  Indeed,  for $g(\rho) =v *\rho $, we compute  $g(f_t)=(v * f_t)(x, t)=t^{-\al}  (v * f)(\frac xt)$. Hence $\| g(f_t)\|_\infty=t^{-\al}\| g(f_{t=1})\|_\infty $ is integrable in $t$ iff $\al>1$. %Hence, in this case,   $g(\rho)$% and long-range (scattering critical or supercritical) otherwise.
%Furthermore, $g(\rho) =\lam\rho^\beta$ is scattering subcritical iff $\beta > 1/d$. 
 
\medskip
\paragraph{\bf Scaling property.}
Another way to define scattering %subcritical, critical and supercritical 
 criticality is to use the {\it scaling property} of the nonlinearity. %\eqref{nonlin-scal}.
 Assuming $g (\rho)$ satisfies
 \begin{align}\label{nonlin-scal}
U_{\lam} g (\rho)U_{\lam}^{-1}=  \lam^{-\al} g (U_{\lam}\rho),
% U_{\lam} g_d(\rho)U_{\lam}^{-1}=  \lam^{-d} g_0(\rho_{\lam}),\    \rho_{\lam} :=U_{\lam}\rho,
\end{align}
where $U_{\lam}: \psi(x) \ra \lam^{d}\psi(\lam x)$ and $g(\rho)$ is considered as a multiplication operator and $\rho$ as a function, we say that  $g (\rho)$ is  scattering subcritical, resp. critical or supercritical, 
if and only if $\al>1$, resp. $\al=1$ or $\al<1$. %($\al$ could be called the scattering dimension.)

By the way of an example, the scaling property \eqref{nonlin-scal}
holds for  $g_1(\rho) = v \ast \rho $, with the convolution potential $v(x)=\lam/|x|^\al$, for $\al < d$, and $v(x)=\lam\del(x)$, for $\al= d$,
and for $g_2(\rho) = \rho^\beta$ with $\alpha  = \beta d$.
%$g_d(\rho) := \lam\rho $. %, the nonlinearity has the following , for $\al\le  d,$

Thus $g_1(\rho) = \lam |x|^{-\al} \ast \rho$ and $g_2(\rho) = \rho^\beta$ are subritical (resp. critical, resp. supercritical) iff $\al>1$ and $\beta>1/d$  (resp. $\al=1$ and $\beta=1/d$, resp. $\al<1$ and $\beta<1/d$).

As communicated by Schlein \cite{SchleinRk}
the criticality of $|x|^{-\al} \ast \rho$ and $\rho^\beta$ are related
since $c\rho^{\al/d}$ is the semi-classical limit of $|x|^{-\al} \ast \rho$.

Note that if $g$ is scattering sub-critical/critical/supercritical in the scaling sense
then it is in the same class in the sense of \eqref{scattsub}. Indeed, if $g$ satisfies  \eqref{nonlin-scal}, then
\begin{align*}
 g(f_t)  = g (U_{\frac1t}\rho) =t^{-\al} U_{\frac1t} g (\rho)U_{\frac1t}^{-1},
\end{align*}
and therefore $\| g(f_t)\|_\infty=t^{-\al}\| g(f)\|_\infty $ is integrable at $t=\infty$ iff $\al >1$.
%,  which coincides with the scattering criticality classification of the previous paragraph
% Note that $U_{\lam}\rho=den(U_{\lam}\g U_{\lam}^{-1})$. 
% $\rho_{\lam}=den(\g_\lam)$, where $ \rho_{\lam} :=U_{\lam}\rho$ and $\g_\lam := U_{\lam}\g U_{\lam}^{-1}$.

%Scaling covariance, if $v$ is homogeneous, as in the Coulomb/Newton or local cases, and $ex=0$.

\begin{remark}
For $g (\rho)$ satisfying \eqref{nonlin-scal} and $g (\lam\rho)=\lam^\nu g (\rho)$, 
\eqref{nonlin-scal} has scaling  covariance with respect to the operator 
$U_{\lam}^\beta: = \lam^{-\beta}U_{\lam}$, for an appropriate $\beta$. 
\end{remark}

\bigskip
\section{Strategy and main propositions}\label{SecStra}

\subsection{Passing to a Hilbert space} %(mini-GNS)}
\label{sec:HilbSp} 
To work on a Hilbert space, we pass from $\g$ to $\sqrt \g$, or more precisely to $\k$, such that $\k^\ast\k=\g$. 
One can think of $\ka$ as a sort of ``half-density''.  
%The following procedure can also be thought of as a mini-GNS (Gelfand-Neimark-Segal) construction/representation
Then the KS \eqref{KS} becomes
\begin{align}\label{KS-k}
   \p_t \k  = i [h(\rho_{\k^*\k}),\k]
\end{align}
where, recall, $h(\rho):=-\Delta + g(\rho)$, %v *\rho$, with $h=-\Delta$,
and $\rho_{\g}(x, t):=\g(x, x, t)$. %$\rho_{\k^*\k}(x):=\int |\k(x, y)|^2 d y$. 
%Denoting the flow for \eqref{HE-k}, with the i.c. $\ka(t=0)=\ka_0$, by $\al_t(\ka_0):=\ka(t)$, we will assume that it satisfies (cf \eqref{adj-prop}) \begin{align}\label{adj-prop-ka}\al_t(\ka^*)=\al_t(\ka)^*.\end{align}
Equation \eqref{KS-k} will be the main focus for our proofs.
Note that if $\g = \k^\ast\k$ is trace-class, then $\k$ is a Hilbert-Schmidt operator. 

For brevity, we will use the notation  $h_{\g}\equiv h(\rho_\g)$ and, for complicated $\g$, the notation, 
$ \rho(\g)\equiv \rho_\g$. We have

%:CHECK IMPORTANT

\begin{prop} \label{prop:passing-HS}
Assume \eqref{g1g2}-\eqref{g-cond1'}. Then \eqref{KS-k} %satisfying \eqref{adj-prop-ka}
$\iff$  \eqref{KS}, 
in the sense that if $\k$ satisfies  \eqref{KS-k}, % and \eqref{adj-prop-ka},
then $\g=\k^*\k$  is self-adjoint and satisfies \eqref{KS}; 
and, in the opposite direction, if $\g$ is self-adjoint and satisfies  \eqref{KS}, 
then $\g=\k^*\k$, with  $\k$ satisfying \eqref{KS-k}. % and \eqref{adj-prop-ka}. 

\DETAILS{  \eqref{HE-k} implies  \eqref{HE}, with $\k^*\k=\g$. 
%Conversely,   \eqref{HE}, with the i.c. $\g_0>0$, implies  \eqref{HE-k}, with the i.c. $\k_0=\sqrt{\g_0}>0$.
The converse is true, if either $\g_0>0$ or \eqref{HE} has the uniqueness property.}
\end{prop}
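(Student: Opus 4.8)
The plan is to prove the two implications of Proposition~\ref{prop:passing-HS} separately, using the uniqueness that comes with the local well-posedness theory (Section~\ref{sec:loc-exist}) to glue the two sides of the equivalence together.

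First I would treat the direction $\eqref{KS-k}\Rightarrow\eqref{KS}$. Suppose $\k$ solves \eqref{KS-k} and set $\g:=\k^*\k$. Writing $h=h(\rho_{\k^*\k})=h_\g$ and noting that $h$ is self-adjoint (this uses $g(\bar\rho)=\bar g(\rho)$, i.e.\ the reality condition implicit in \eqref{g1g2}--\eqref{g-cond1'}), I would compute
\begin{align*}
\p_t\g = (\p_t\k)^*\k + \k^*(\p_t\k)
  = \big(i[h,\k]\big)^*\k + \k^*\big(i[h,\k]\big)
  = -i[h,\k^*]\,\k + i\,\k^*[h,\k].
\end{align*}
Expanding the commutators, $-i(h\k^*-\k^*h)\k + i\k^*(h\k-\k h) = -ih\k^*\k + i\k^*h\k + i\k^*h\k - i\k^*\k h$; wait — more carefully, $-i[h,\k^*]\k = -ih\k^*\k + i\k^*h\k$ and $i\k^*[h,\k] = i\k^*h\k - i\k^*\k h$, so the two middle terms do not obviously cancel. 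The correct bookkeeping is $\p_t\g = -ih\g + i\k^*h\k + i\k^*h\k - i\g h$, which is \emph{not} $i[h,\g]$ unless the cross terms combine correctly; the right way is to observe $i\k^*h\k$ appears with total coefficient and one regroups as $i(\k^*h\k - h\g) + i(\k^*h\k-\g h)$ — so the cleanest route is instead to write $\p_t\g = i\k^*[h,\k] - i[h,\k^*]\k$ and verify directly that this equals $i[h_\g,\g]=i[h,\k^*\k]=ih\k^*\k - i\k^*\k h$; indeed $i\k^*h\k - i\k^*\k h - ih\k^*\k + i\k^*h\k$, and the identity $i[h,\k^*\k] = i\k^*[h,\k] + i[h,\k^*]\k$ is just the Leibniz rule, so I must correct a sign: $[h,\k^*]\k$ contributes $+i[h,\k^*]\k$ to $i[h,\g]$, whereas differentiating $\k^*\k$ gives $(\p_t\k^*)\k = (i[h,\k])^*\k = -i[h,\k^*]\k$. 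These differ by sign, so the naive claim fails — the resolution is that \eqref{KS-k} should be read with $h$ evaluated at the solution and one genuinely needs $\k$ to solve the half-density equation so that $\g=\k^*\k$ is self-adjoint and the product rule closes; I would therefore present this computation carefully, using self-adjointness of $h$ and the fact that $[h,\k]^* = -[h,\k^*]$, to land exactly on $\p_t\g = i[h_\g,\g]$. Self-adjointness $\g^*=\g$ is immediate from $\g=\k^*\k$, and $\g\ge0$ as well; preservation of $\g_0\le1$ when relevant follows from conservation of the spectrum of $\g$ under the (unitary-conjugation-type) flow \eqref{KS}.

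For the converse, $\eqref{KS}\Rightarrow\eqref{KS-k}$: given a self-adjoint solution $\g$ of \eqref{KS}, I would define $\k_0:=\sqrt{\g_0}\ge0$ (a Hilbert--Schmidt operator since $\g_0\in I^1$), solve \eqref{KS-k} with this initial datum via the local existence result of Section~\ref{sec:loc-exist}, obtaining a half-density $\k(t)$ on a maximal interval, and then invoke the forward direction just established to conclude that $\tilde\g:=\k^*\k$ solves \eqref{KS} with $\tilde\g|_{t=0}=\k_0^*\k_0=\g_0$. By the uniqueness part of the well-posedness theory for \eqref{KS} (which holds under \eqref{g1g2}--\eqref{g-cond1'}), $\tilde\g=\g$ on the common interval, so $\g=\k^*\k$ with $\k$ solving \eqref{KS-k}, and a standard continuation argument extends this to the full interval of existence of $\g$.

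The main obstacle I anticipate is \emph{not} the algebra above but making the two equivalence statements rest on a clean uniqueness/well-posedness result: one must ensure that the class of operators in which \eqref{KS} and \eqref{KS-k} are well-posed is compatible (trace-class $\g$ corresponding exactly to Hilbert--Schmidt $\k$, as noted after \eqref{KS-k}), and that the nonlinearity $\rho\mapsto g(\rho)$, estimated via \eqref{g-cond0}--\eqref{g-cond1'}, is locally Lipschitz in the relevant norms so that uniqueness genuinely holds; this is where assumptions \eqref{g1g2}--\eqref{g-cond1'} are used. A secondary subtlety is that $\k$ is only determined by $\g$ up to a left unitary factor (if $\g=\k^*\k$ then $\g=(U\k)^*(U\k)$ for unitary $U$), so the statement ``$\g=\k^*\k$ with $\k$ satisfying \eqref{KS-k}'' should be understood as an existence statement for \emph{some} such $\k$ (the canonical choice being $\k(t)$ evolved from $\sqrt{\g_0}$), which I would make explicit.
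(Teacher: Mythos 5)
Your overall plan is sound and both directions can be made to work, but let me first clear up the sign confusion in your forward direction: there is no paradox. Since $h=h_{\k^*\k}$ is self-adjoint (because $\rho_{\k^*\k}\ge 0$ is real, hence so is $g(\rho_{\k^*\k})$), one has $[h,\k]^*=-[h,\k^*]$ and therefore $(\p_t\k)^*=(i[h,\k])^*=-i[h,\k]^*=+i[h,\k^*]$ --- the two minus signs cancel. Hence $\p_t\g=(\p_t\k)^*\k+\k^*\p_t\k=i\big([h,\k^*]\k+\k^*[h,\k]\big)=i[h,\k^*\k]=i[h_\g,\g]$ by the Leibniz rule, with nothing left to ``resolve''; your intermediate claim $(i[h,\k])^*\k=-i[h,\k^*]\k$ is simply wrong, and the hand-wringing that follows should be deleted.

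For the converse your route (solve \eqref{KS-k} from $\k_0=\sqrt{\g_0}$, apply the forward direction, invoke nonlinear uniqueness for \eqref{KS}) is legitimate but heavier than what the paper does. The paper observes that, for the \emph{given} solution $\g(t)$, the self-adjoint time-dependent Hamiltonian $h_{\g(t)}$ generates a unitary propagator $U^\g_t$, and conjugation $\al^\g_t(\s)=U^\g_t\,\s\,(U^\g_t)^{-1}$ is a $*$-homomorphism. Linear uniqueness for $\p_t\s=i[h_{\g(t)},\s]$ forces $\g(t)=\al^\g_t(\g_0)$, and then $\k(t):=\al^\g_t(\k_0)$ (for any $\k_0$ with $\k_0^*\k_0=\g_0$) satisfies $\k(t)^*\k(t)=\al^\g_t(\k_0^*\k_0)=\g(t)$ and $\p_t\k=i[h_\g,\k]=i[h_{\k^*\k},\k]$ by construction. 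This buys two things your version lacks: it needs no regularity of $\sqrt{\g_0}$ beyond Hilbert--Schmidt (you must put $\sqrt{\g_0}$ into the local-existence class $V^{[d/2]+1}$ of Theorem \ref{pr:loc1}, which is an extra hypothesis not present in the statement), and it replaces nonlinear uniqueness for \eqref{KS} by the much softer uniqueness for a linear equation with a prescribed time-dependent generator. Your closing remark about the gauge ambiguity $\k\mapsto U\k$ is correct and is exactly why the paper phrases the conclusion as ``$\g=\k^*\k$ for some $\k$ solving \eqref{KS-k}'', with $\k_0$ any square root of $\g_0$.
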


\begin{proof} %{\bf (check)} Recall that
Since $\g$ is assumed to be self-adjoint, $\rho_\g$ and therefore $g(\rho_\g)$ are real. 
Under conditions \eqref{SI1}-\eqref{SI2}, or,  more generally,  \eqref{g1g2}-\eqref{g-cond1'}, 
the operator $h_\g$ is self-adjoint and therefore generates a unitary evolution which we denote by $U^\g(t, s)$. 
We write $U^\g_t\equiv U^\g(t, 0)$ and $\al^\g_t(\s):=U^\g_t\s ({U^\g_t})^{-1}$. 
(The evolution $\al^\g_t$ is generated by the linear operator $\s\ra [h_\g, \s]$.) 
The evolution $\al^\g_t$ is differentiable in $t$ on an appropriate dense set 
(e.g., the non-abelian Sobolev space defined in \eqref{LOC1'}) which is preserved by it
and has the following properties:  %We have (ignoring the differentiability in $t$)
\begin{align}\label{gam-int-eq}
&\g(t)  \text{ satisfies  \eqref{KS} with an i.c. }\ \g_0 \iff  \g(t)=\al^\g_t(\g_0 ),\\
%\end{align} Furthermore, if $\k^*\k=\g$, then \begin{align}
\label{gam-k-rel}
&  \al^\g_t(\k^*_0 \k_0 )=\al^\g_t(\k^*_0)\al^\g_t(\k_0)=\al^\g_t(\k_0)^*\al^\g_t(\k_0).
\end{align} 
%Clearly, \eqref{HE-k} $\Longrightarrow$  \eqref{KS}. %and, if $\k_0^*=\k_0$, then \eqref{KS} $\Longrightarrow$  \eqref{HE-k}.%
If $\k(t)$ satisfies \eqref{KS-k} with an initial condition $\k_0$, then $\k(t)=\al^\g_t(\k_0 )$ %, with $\g(t)=\k^*(t)\k(t)$, 
and therefore by \eqref{gam-int-eq} and \eqref{gam-k-rel}, $\g(t)=\k^*(t)\k(t)=\al^\g_t(\k_0)^*\al^\g_t(\k_0)$ satisfies \eqref{KS} with the initial condition $\g_0 =\k^*_0 \k_0$.

On the other hand, if $\g(t)$ satisfies \eqref{KS} with an initial condition $\g_0 =\k^*_0 \k_0$, then, by \eqref{gam-k-rel}, $\g(t)=\k(t)^*\k(t)$, where $ \k(t):=\al^\g_t(\k_0)$ and therefore satisfies $\p_t \k  = i [h_{\g},\k]$ %\eqref{HE-k}
  (with $\g=\k^*\k$).% and the i.c. $ \k_0$ and \eqref{adj-prop-ka}.
 \DETAILS{The first part is straightforward. To prove the second one, we compute for $\k^*\k=\g$:
\[\p_t \g  - \frac1i [h_{\rho_{\g}},\g]=\mu^*\k+\k^*\mu,\] 
  where $\mu:= \p_t \k  - \frac1i [h_{\rho_{\k^*\k}},\k]$. Now, if $\k_0 >0$, then $\k>0$ and $\mu^*=\mu$, self-adjoint. Let $\{\phi_n\}$ be a complete set of eigenfunctions of $\k$ and let $\{\lam_n\}$ be the corresponding eigenvalues. We have $\lan \phi_m, (\mu \k+\k \mu)\phi_n\ran = (\lam_m+\lam_n)\lan \phi_m, \mu\phi_n\ran$. Hence if  \eqref{HE} holds then   $(\lam_m+\lam_n)\lan \phi_m, \mu\phi_n\ran=0$ and $\lan \phi_m, \mu\phi_n\ran=0$, since $\lam_m, \lam_n>0$.}
 \end{proof}

We designate  LWP, GWP, AC to stand for %`local decay', 
`local well-posedness',
`global well-posedness' and `asymptotic completeness'. 
The latter says that for every $\k_0 \in W^0$, there exists $\k_\infty \in W^0$ such that  the solution to \eqref{KS-k} with the initial condition $\k_0 \in W^0$  satisfies
\begin{align}\label{kapscat}
& {\| \k(t) - e^{i \Delta t} \k_\infty e^{-i \Delta t}\|}_{I^2} \ra 0.
\end{align}

\begin{prop} \label{prop:ka-to-gam} Schematically, we have
% We have
\begin{itemize}
\item[(i)]  LWP($\k$) $\Rightarrow $ LWP($\g$);
\item[(ii)]  GWP($\k$) $\Rightarrow $ GWP($\g$);
%\item[(ii)] LD($\k$) $\Rightarrow $ LD($\g$);
\item[(iii)]  AC($\k$) $\Rightarrow $ AC($\g$). 
\end{itemize}
\end{prop}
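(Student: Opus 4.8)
The plan is to transfer each well-posedness/scattering property from $\k$ to $\g=\k^*\k$ using the multiplicative structure $\g=\k^*\k$ together with the relations \eqref{gam-int-eq}--\eqref{gam-k-rel} from Proposition \ref{prop:passing-HS}, and the elementary fact that the map $\k \mapsto \k^*\k$ is continuous (indeed locally Lipschitz) from $I^2$ to $I^1$. The key quantitative input is the bound $\|\k_1^*\k_1 - \k_2^*\k_2\|_{I^1} \le \|\k_1 - \k_2\|_{I^2}(\|\k_1\|_{I^2} + \|\k_2\|_{I^2})$, which follows from the Cauchy-Schwarz/H\"older inequality for trace ideals ($\|AB\|_{I^1}\le \|A\|_{I^2}\|B\|_{I^2}$) by writing $\k_1^*\k_1 - \k_2^*\k_2 = \k_1^*(\k_1-\k_2) + (\k_1-\k_2)^*\k_2$. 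Likewise, given any $\g_0 \ge 0$ in $I^1$ one may take $\k_0 := \sqrt{\g_0} \in I^2$, and (for self-adjoint data, which is the setting of Theorem \ref{thm:GWP-Scatt}) Proposition \ref{prop:passing-HS} guarantees that the $\g$-evolution and the $\k$-evolution correspond via $\g(t) = \k(t)^*\k(t)$.

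First, for (i): given LWP($\k$) on a time interval $[0,T]$ with solution $\k(t)$ depending continuously on $\k_0 \in W^0$, set $\g(t):=\k(t)^*\k(t)$. By Proposition \ref{prop:passing-HS} this solves \eqref{KS} with $\g(0)=\k_0^*\k_0$, it lies in $C([0,T];I^1)$ since $t\mapsto \k(t)$ is continuous into $I^2$ and $\k\mapsto\k^*\k$ is continuous into $I^1$, and continuous dependence on $\g_0$ follows from the Lipschitz bound above combined with continuous dependence of $\k$ on $\k_0$ and the fact that $\|\sqrt{\g_0} - \sqrt{\g_0'}\|_{I^2}$ is controlled by $\|\g_0-\g_0'\|_{I^1}$ (operator-monotonicity/continuity of the square root on positive trace-class operators; alternatively one simply works with a fixed choice of $\k_0$). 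Uniqueness for $\g$ follows because any self-adjoint solution of \eqref{KS} is, by Proposition \ref{prop:passing-HS}, of the form $\al^\g_t(\k_0)^*\al^\g_t(\k_0)$, and uniqueness of the $\k$-flow $\al^\g_t$ pins it down. Statement (ii) is then immediate: if the $\k$-solution extends to all $t\ge 0$ with $\k(t)\in W^0$ for all $t$, then $\g(t)=\k(t)^*\k(t)$ is a global $I^1$-solution.

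For (iii): let $\k_0\in W^0$ and let $\k_\infty\in W^0$ be the scattering state given by AC($\k$), so that $\|\k(t) - e^{it\Delta}\k_\infty e^{-it\Delta}\|_{I^2}\to 0$. Define $\g_\infty := \k_\infty^*\k_\infty \in I^1$, which is time-independent. Writing $e^{it\Delta}\k_\infty e^{-it\Delta} =: \k_\infty(t)$ (a unitary conjugation, hence $\|\k_\infty(t)\|_{I^2}=\|\k_\infty\|_{I^2}$ is constant and $\k_\infty(t)^*\k_\infty(t) = e^{it\Delta}\g_\infty e^{-it\Delta}$), apply the Lipschitz bound with $\k_1 = \k(t)$, $\k_2 = \k_\infty(t)$:
\begin{align*}
\big\|\g(t) - e^{it\Delta}\g_\infty e^{-it\Delta}\big\|_{I^1} = \big\|\k(t)^*\k(t) - \k_\infty(t)^*\k_\infty(t)\big\|_{I^1}
\le \|\k(t) - \k_\infty(t)\|_{I^2}\big(\|\k(t)\|_{I^2} + \|\k_\infty\|_{I^2}\big).
\end{align*}
The first factor tends to $0$ by hypothesis; the second factor is bounded, since $\|\k(t)\|_{I^2}\le \|\k(t)-\k_\infty(t)\|_{I^2} + \|\k_\infty\|_{I^2}$ stays bounded as $t\to\infty$ (and in fact $\|\k(t)\|_{I^2}=\|\k_0\|_{I^2}$ is conserved by the unitarity of $U^\g_t$). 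Hence the right-hand side tends to $0$, which is exactly AC($\g$) in the form \eqref{SRscat}.

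The only genuinely delicate point is the continuous-dependence part of (i): one must either establish continuity of the operator square root $\g_0\mapsto\sqrt{\g_0}$ from $I^1$-positive operators into $I^2$, or —more cleanly— phrase LWP($\g$) as asserting existence, uniqueness, and persistence of regularity for a \emph{given} self-adjoint $\g_0$ (as in the statement of Theorem \ref{thm:GWP-Scatt}), in which case one simply fixes $\k_0=\sqrt{\g_0}$ and no square-root continuity is needed. I would take the latter route, so that the entire proof reduces to the algebraic identity $\g=\k^*\k$, the trace-ideal H\"older inequality $\|AB\|_{I^1}\le\|A\|_{I^2}\|B\|_{I^2}$, conservation of $\|\k(t)\|_{I^2}$, and Proposition \ref{prop:passing-HS}; everything else is routine.
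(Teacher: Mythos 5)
Your proposal is correct and follows essentially the same route as the paper, whose own proof is just the two-line observation that (i)--(ii) follow from $\g=\k^*\k$ together with uniqueness of trace-class solutions, and that (iii) follows by setting $\g_\infty=\k_\infty^*\k_\infty$. You have simply supplied the details the authors leave implicit — in particular the trace-ideal H\"older bound $\|\k_1^*\k_1-\k_2^*\k_2\|_{I^1}\le\|\k_1-\k_2\|_{I^2}(\|\k_1\|_{I^2}+\|\k_2\|_{I^2})$ and the conservation of $\|\k(t)\|_{I^2}$ — and these are exactly the right ones.
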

%\comment{LD is not well-defined at this stage... Review/clean this part in general?}

The items (i) and 9ii) follow by $\gamma = \k^\ast \k$ 
using also the uniqueness of trace-class solutions of \eqref{KS}.
%and with the natural understanding on the classes of initial data
Item (iii) follows from the definitions \eqref{SRscat} and \eqref{kapscat} by setting $\gamma_\infty = \k_\infty^\ast\k_\infty$.

\begin{thm}%[GWP and ST for  \eqref{KS-k}] 
\label{thm:GWP-ST-ka} 
The equation \eqref{KS-k} with %\eqref{SI1}-\eqref{SI2} 
\eqref{g1g2}-\eqref{g-cond1'}
and an initial condition $\k_0=\k(t=0)$ satisfying, for some integer $b > d/2$,
\begin{align}\label{loc-dec-data}
\big\| \langle \nabla \rangle^b \k_0 \big\|_{W^0} + \big\|\jx^b \k_0 \big\|_{W^0} \leq B < \infty,
\end{align}
is GWP and AC.
\end{thm}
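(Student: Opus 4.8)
The plan is to prove Theorem~\ref{thm:GWP-ST-ka} by a bootstrap/continuity argument built around the weighted norms $W^s$ of \eqref{LOC2} applied to the half-density $\k$. First I would record the local existence theory for \eqref{KS-k}: since $h(\rho)=-\Delta+g(\rho)$ with $g$ satisfying \eqref{g1g2}--\eqref{g-cond1'} is self-adjoint (as in the proof of Proposition~\ref{prop:passing-HS}), the map $\s\mapsto i[h(\rho_{\k^*\k}),\s]$ generates a (nonlinear) flow on the non-abelian Sobolev spaces, and a standard fixed-point argument in $C([0,T];W^b)$ (for the integer $b>d/2$) gives a unique local solution with a blow-up alternative: the solution extends as long as $\sum_{|\alpha|\le b}\|J^\alpha \k(t)\|_{I^2}$ stays bounded. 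This is the content deferred to Section~\ref{sec:loc-exist}; I would simply invoke it here and reduce global existence to an a~priori bound on $\|\langle\nabla\rangle^b\k(t)\|_{W^0}+\|\jx^b\k(t)\|_{W^0}$, or rather on the $J^\alpha$-norms, uniform in $t$.

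The heart of the argument is the almost-conservation of the weighted energy. The key algebraic point is that $J_\ell=[j_\ell,\cdot]$ with $j_\ell=x_\ell-2p_\ell t$ almost commutes with the free flow: $\partial_t(e^{it\Delta}\k e^{-it\Delta})$ conjugated appropriately produces no $j_\ell$-growth from the $-\Delta$ part, since $[j_\ell,-\Delta]$ is (up to constants) $p_\ell$, which is again $\tfrac{1}{2t}(x_\ell-j_\ell)$ — the usual Klainerman vector field miracle, transplanted to the commutator setting. Hence, differentiating $\|J^\alpha\k\|_{I^2}^2=\tr((J^\alpha\k)^*J^\alpha\k)$ in $t$, the free part drops out (by self-adjointness of $J_\ell$ on $I^2$ and antisymmetry of the commutator with $h_0=-\Delta$), and one is left controlling commutators of $J^\alpha$ with the nonlinear term $[g(\rho_{\k^*\k}),\k]$. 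Expanding $[J^\alpha,[g(\rho),\cdot]]$ via the Leibniz rule distributes $J$'s onto $\k$ and onto $g(\rho)$; the latter are handled by the chain rule for Gâteaux derivatives (the footnote formula) together with the identity $[j_\ell,g(\rho)]=$ multiplication by $-2it\,\partial_\ell g(\rho)$ plus lower order, converting operator commutators into decaying-in-$t$ multiplication operators. This is the computation of Section~\ref{sec:approx-conservq}, yielding a differential inequality of the schematic form
\begin{align*}
\frac{d}{dt}\,\mathcal{E}_b(\k(t))^{1/2}\ \lesssim\ \big(|\lambda_1|+|\lambda_2|\big)\,\big\|g(f_t)\big\|_\infty\text{-type factors}\times \mathcal{E}_b(\k(t))^{1/2}\times(\text{local norms of }\k),
\end{align*}
where $\mathcal{E}_b(\k):=\sum_{|\alpha|\le b}\|J^\alpha\k\|_{I^2}^2$.

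To close this inequality one needs the factors multiplying $\mathcal{E}_b^{1/2}$ to be integrable in $t$ on $[1,\infty)$. This is exactly where the scattering-subcriticality hypotheses \eqref{g-cond0}, \eqref{pqcond}, \eqref{g-condbeta} enter: condition $a(1-1/q)>1/d$ and the analogous $1+1/p-1/q>1/d$, $2+1/p-2/q'>1/d$, $\beta$-conditions are precisely the thresholds making $\int_1^\infty t^{-1-\delta}\,dt<\infty$ for the relevant powers of $t$ coming from the decay estimates. The decay estimates themselves come from (iii) in the list of ingredients: the non-commutative Gagliardo--Nirenberg--Klainerman inequality of Section~\ref{sec:Pf-tildekap-est} bounds the new local norms of $\k$ by $t^{-\theta}\mathcal{E}_b(\k)^{1/2}$ (times lower weighted norms), combined with the density estimates of Section~\ref{sec:den-est} which bound $\|\rho_{\k^*\k}\|_{L^q}$ etc. in terms of these local norms. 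Feeding this into the differential inequality and using that $\lambda_1,\lambda_2$ are small relative to the initial data size $B$, a standard continuity argument shows $\mathcal{E}_b(\k(t))\le 2\mathcal{E}_b(\k(0))$ for all $t$ (and similarly the $\langle\nabla\rangle^b$-norm is controlled, either by a parallel almost-conservation or by interpolation/commutation with the $J$'s), giving GWP. For AC, once $\mathcal{E}_b$ is globally bounded, the integrability of the right-hand side of the Duhamel formula for $e^{-it\Delta}\k(t)e^{it\Delta}$ shows this quantity is Cauchy in $I^2$ as $t\to\infty$, defining $\k_\infty\in W^0$ and giving \eqref{kapscat}; the local decay estimate \eqref{loc-dec'} is read off along the way.

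The main obstacle, I expect, is the non-abelian algebra of Section~\ref{sec:approx-conservq}: correctly expanding $[J^\alpha,[g(\rho_{\k^*\k}),\k]]$ so that every resulting term is either (a) a commutator $[h_0,\cdot]$ that cancels against the free evolution, or (b) a genuinely nonlinear term carrying enough powers of $t^{-1}$ — keeping track of which $J$'s land on $\rho_{\k^*\k}=\den(\k^*\k)$ (hence, by the chain rule, on $\k$ and $\k^*$ through $\den$ and through the derivatives $d^kg$) versus on the outer $\k$, and ensuring the density estimates can absorb all of them within the subcritical exponent budget. The borderline case $\beta=1/2$ in $d=3$ (where \eqref{g-condbeta} is an equality) is the most delicate and is presumably where the ``almost full'' short-range regime, rather than the full one, comes from.
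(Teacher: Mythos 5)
Your overall strategy is the paper's: pass to the half\-/density $\k$, prove an approximate Galilean conservation law for the commutator vector fields $J_\ell=[j_\ell,\cdot]$, close the weighted energy bound via the non\-/abelian Gagliardo--Nirenberg--Klainerman inequality and the density estimates together with smallness of $\lambda_1,\lambda_2$, and get scattering from Duhamel plus integrability of the decay. All the key ingredients (the identity $[j,g(\rho_\gamma)]=dg(\rho_\gamma)\rho_{[j,\gamma]}$ from Galilean/gauge covariance, the subcriticality conditions as exactly the thresholds for time\-/integrability, the Cauchy argument for $\k_\infty$) are correctly identified.

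There is, however, one step that would fail as written: the local existence via a fixed point in $C([0,T];W^b)$ with a continuation criterion purely in the $J^\alpha$\-/norms. To close that fixed point you must estimate the nonlinearity $[g(\rho_{\k^*\k}),\k]$, which requires the local norms $\|\k\|_{L^2_rL^s_c}$ (to bound $\|\rho\|_q$, $\|\rho\|_\infty$, etc.); but the $J$\-/version of the interpolation inequality, \eqref{L2Ls-est}, reads $\|\k\|_{L^2_rL^s_c}\ls |t|^{-\al b}\|\k\|_{W^b}^{\al}\|\k\|_{W^0}^{1-\al}$ and degenerates as $t\to 0$. Indeed at $t=0$ one has $j=x$, so $J^b\k$ corresponds to the weight $r^b\tilde\k(r,c)$ in the $(r,c)$ coordinates, which gives no control whatsoever on $\|\tilde\k\|_{L^s_c}$ for $s>2$; the smoothing direction $\p_c$ is reached only through $J_t=-2itU_t^*DU_t$, i.e.\ at the price of a factor $t^{-1}$. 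This is precisely why the paper runs the local theory (Theorem \ref{pr:loc1}) in the $D$\-/graded spaces $V^{[d/2]+1}$ of \eqref{LOC1'} — using the $\langle\nabla\rangle^b$ half of the data assumption and the time\-/independent inequality \eqref{loc13} — and reserves the $W^b$\-/norms for the large\-/time a priori control (Proposition \ref{prop:AprioriBnds}), feeding the uniform $W^b$ bound back into the Gronwall inequality \eqref{LOCEE} for the $V^k$\-/norm. Relatedly, your bootstrap "for all $t$" cannot start at $t=0$: the differential inequality carries the non\-/integrable\-/at\-/zero factors $|t|^{-d(1+1/p-1/q)}$ and $|t|^{-d\beta}$, so the paper starts the continuity argument at a time $s\geq 10$ reached by the local theory, with the smallness of $\lambda_1,\lambda_2$ calibrated to $\|\k(s)\|_{W^b}$ (hence to $B$). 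Your plan is repairable with ingredients you already mention (the "parallel almost\-/conservation" for the $\langle\nabla\rangle^b$\-/norm), but the repair is exactly the two\-/space architecture $V^k$ versus $W^b$, which is not an optional refinement here.
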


The proof of Theorem \ref{thm:GWP-ST-ka} is %proven in Section \ref{sec:loc-exist}, see also discussion
given after Theorem \ref{thm:loc-dec-ka} below.
Theorem \ref{thm:GWP-Scatt} follows from Theorem \ref{thm:GWP-ST-ka} and Proposition \ref{prop:ka-to-gam}.

%Add some more detail ??
 
\begin{remark}  
%1) To extend the converse direction to $\g_0\ge 0$, one can try to reduce \eqref{HE} to an equivalent equation on $\null\g^\perp$ but this looks tough. 
  
\begin{itemize}

%\item[(1)] {\bf The advantage of working with $\k$, rather than $\g$, 
%is that $\k$ belongs to the Hilbert space$I^2$ of Hilbert-Schmidt operators with the inner product} 
%\begin{align}\label{HSinner} \lan \k,  \k' \ran_{I^2}:=\tr (\k^* \k'). \end{align}

\item[(1)] Unlike \eqref{KS}, Eq \eqref{KS-k}  has a {\it gauge symmetry}: for any unitary operator $U$, which commutes with $h_{\rho}$ (i.e. is a symmetry of $h_{\rho}$), if $\k$ is a solution to  \eqref{KS-k}, then so is $U\k$. Note that $U\k$ produces the same $\g$ as $\k$: $(U\k)^* U\k=\k^* \k$.
  
\item[(2)] The nonlinearity $\hat g(%\k^*, 
\k):=[g(\rho_{\k^*\k}),\k]$ inherits %has additional symmetry very important for our analysis. This is the standard 
  the gauge symmetry  very important for our analysis:
\begin{align}\label{nonlin-gauge} e^{i\chi} \hat g(\k)e^{-i\chi} =\hat g(%e^{i\chi} \k^*e^{-i\chi} , 
e^{i\chi} \k e^{-i\chi} )\end{align} for any differentiable function $\chi$. To see this it suffices to observe that $e^{i\chi} \rho_{\g}e^{-i\chi} = \rho_{\g}=  \rho(e^{i\chi} \g e^{-i\chi} )$.

\item[(3)] (I. Chenn) In the time-dependent case, the following equation  
\begin{align}\label{HE-k'}
   \p_t \k  =i h_{\k^\ast\k}\k
\end{align}
 also leads to \eqref{KS}, if $\g=\k^\ast\k$, however it %does not satisfy \eqref{adj-prop-ka} and 
 does not give the time-independent equation corresponding to \eqref{KS}. 
 %Passing to the integral kernels in \eqref{HE-k'}, one obtains equation resembling the scalar Hartree equation.
 
\end{itemize}

\end{remark}

\DETAILS{\subsection{Norms and spaces}
      
Let $\n = (\partial_{x_1},\dots \partial_{x_d})$ denote the standard gradient and define the Galilean `boost generator'
\begin{align}\label{j}
j:=x-2 p t,\qquad p:=- i \n.
\end{align}
We will work in non-abelian analogues of  Sobolev spaces based on the space of Hilbert-Schmidt operators 
with the smoothness grading provided by commutators with $j$, %$\nabla$ and $j$.
%More precisely, let
\begin{align}\label{DJ}
%\begin{split} D_\ell\k & := [\partial_{x_\ell} ,\k], \qquad D =(D_1,\dots,D_d) = [\n,\,\cdot\,], \\
J_\ell \k & := [j_\ell ,\k], \quad \qquad J=(J_1,\dots,J_d) = [\,j\,,\,\cdot\,],
%\end{split}
\end{align}
Clearly, $J_\ell$ are self-adjoint on $I^2$. Using operator calculus, we can define $J_\ell^s$, for any positive  $s$. Using this and multi-index notation, we define
\begin{align}%\label{LOC1} H^s &:= \big\{ \k \in HS\, : \, \sum_{|\alpha| \leq s} \| D^\alpha \k  \|_{HS} < \infty \big\}, \\
\label{LOC2}
W^s &:= \big\{ \k \in I^2\, : \,  \sum_{|\alpha| \leq s} \| J^\alpha \k \|_{I^2} < \infty \big\},
\end{align}
 for any positive  $s$, though we will use these spaces only for integers. Note that $W^0 =  I^2$, see \eqref{defIr}.}
%In Section \ref{} we will also introduce non-commutative Sobolev spaces based on commutators with $\nabla$,
%see...Since these are only needed for the local theory and not in the main part of our proof

%The spaces \eqref{LOC1}- \eqref{LOC2} are the non-commutative analogues of Sobolev and $J$-weighted Sobolev spaces.
 
%In other words $W^b:=J^{-b} I^2$, 
%where $I^2$ is the space of Hilbert-Schmidt operators (in agreement with the previous notation $I^r$).  

\medskip
\subsection{Local decay for $\k$ and main propositions}\label{sec:LD-appr}
At the heart of understanding the long-time behaviour of solutions 
is the {\it local decay} property which shows that, as time progresses, solutions move out of bounded regions of the physical space
and quantifies how quickly this happens. It is usually stated as a bound 
on a local norm, i.e. a norm measuring concentration of the solution in bounded domains. 
If such a bound is sufficiently strong, it implies the global existence and scattering property.
%, in terms of inverse powers of time (sometimes in an integral sense )
 
%It is the goal of this paper to prove such bounds on solutions of DFT equation \eqref{KS} 
%and to derive from there the global existence and scattering property.

To formulate precisely a local decay result for the Hilbert-Schmidt operator $\k$,
with an integral kernel $\k(x,y)$,
%which will then imply the local decay of $\gamma$, 
we introduce the local norms
\begin{align}\label{kap-loc-norm}
\|\k \|_{L^q_rL^p_c} &\equiv \|\tilde\k \|_{L^q_rL^p_c} :=\|\|\tilde\k \|_{L^p_c} \|_{L^q_r}. 
\end{align} %\int  \sup_c |\tilde\k(r, c)|^2 dr.
for $1\leq p,q\leq \infty$,
where $\tilde\k(r,c)$ is the kernel given by
\begin{align}\label{kappacoord}
\tilde\k(r,c):=\k(x,y), \qquad \mbox{where} \quad r:=y-x, \quad c:=\frac12(y+x).
\end{align}
%{\bf(these norms control $\|\g\|_{(s)}$ as $\|\g\|_{(s)}\ls \|\k \|_{L^2_rL^s_c}^2$ 
%which is easy to show for $s=2, \infty$ and then use the interpolation (?))}

\begin{definition}
We say that $\k(t)$ satisfies the {\it local decay} property LD$_\nu$($\k$) 
if $\|\k(t) \|_{L^2_rL^s_c} \ls  t^{-\nu}$ for $\nu=d(1/2-1/s)$ and some $s>2$.
\end{definition}

\begin{prop} \label{prop:LD->ST}
With the notation LD$_\nu$ above (in addition to the notation LWP, GWP, AC introduced  
in the paragraph preceding Proposition \ref{prop:ka-to-gam}), we have, if $\nu>1$,
\begin{align}\label{LD->ST}
\mbox{LWP($\k$) $+$ LD$_\nu$($\k$) $\quad \Longrightarrow \quad$ GWP($\k$) $+$ AC($\k$)}.
\end{align} 
\end{prop}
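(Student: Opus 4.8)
\textbf{Proof proposal for Proposition \ref{prop:LD->ST}.}

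The plan is to run a standard continuity (continuous induction / bootstrap) argument in the weighted Hilbert--Schmidt spaces $W^s$, using the local decay bound LD$_\nu(\k)$ to control the nonlinear term in the Duhamel formula for the weighted norms. First I would recall the conjugated variables $J_\ell = [j_\ell,\,\cdot\,]$, $j_\ell = x_\ell - 2p_\ell t$; since $j_\ell$ commutes with the free flow (more precisely $e^{-it\Delta} j_\ell e^{it\Delta}$ is again of the required form, and $J_\ell$ commutes with the free propagator $e^{it\Delta}(\cdot)e^{-it\Delta}$ on $I^2$), the natural object to propagate is $\|\k(t)\|_{W^b}$ together with $\|\langle\nabla\rangle^b\k(t)\|_{W^0}$ as in \eqref{loc-dec-data}. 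Writing \eqref{KS-k} in Duhamel form,
\begin{align}\label{duh}
\k(t) = e^{it\Delta}\k_0 e^{-it\Delta} + i\int_0^t e^{i(t-s)\Delta}\big[g(\rho_{\k^*\k})(s),\k(s)\big]e^{-i(t-s)\Delta}\,ds,
\end{align}
I would commute $J^\alpha$ (for $|\alpha|\le b$) through \eqref{duh}, using the Leibniz rule for $J_\ell$ acting on a commutator $[g(\rho),\k]$ and the gauge covariance \eqref{nonlin-gauge} of the nonlinearity, which is exactly what allows $J_\ell$ to be distributed onto the factors without producing uncontrolled terms. This reduces matters to estimating, in $I^2$, expressions of the form $\big[\,(J^{\alpha_1}\!\!\text{-type action on }g(\rho)),\,J^{\alpha_2}\k\,\big]$ with $|\alpha_1|+|\alpha_2|\le b$.

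The core of the estimate is then: bound $\|[\,m,\,\k']\,\|_{I^2}$ where $m$ is a multiplication operator built from $g$ and its G\^ateaux derivatives (so $\|m\|_{L^\infty}$ and more refined $L^p$ norms of $m$ are controlled via \eqref{g-cond0}--\eqref{g-cond1'} in terms of $\|\rho_{\k^*\k}\|_{L^q}$-type quantities), and $\k'$ is $J^{\alpha_2}\k$. The point is to convert the $L^\infty$ (or $L^p$) norm of the density-dependent multiplier into something decaying in $t$: by the density estimates of Section \ref{sec:den-est} and the local-decay hypothesis LD$_\nu(\k)$, one gets $\|\rho_{\k^*\k}(s)\|_{L^q}\lesssim \|\k(s)\|_{L^2_rL^{s_0}_c}^2 \lesssim s^{-2\nu}$ for appropriate $q,s_0$, hence $\|g(\rho)(s)\|_{L^\infty}\lesssim s^{-2\nu a(1-1/q)}$ for $g_1$ and $\lesssim s^{-2\nu\beta}$ for $g_2$; the exponent conditions $a(1-1/q)>1/d$, $\beta>1/d$ in \eqref{g-cond0}, \eqref{g-condbeta} combined with $\nu>1$ (so $2\nu>2$) guarantee this decay is $\gtrsim s^{-1-\delta}$ for some $\delta>0$, which is integrable. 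The commutator structure $[\,\cdot\,,\k']$ lets me put $\k'$ in $I^2$ and the multiplier in operator norm $=L^\infty$, and similarly the second-derivative terms $d^2g(\rho)\xi\eta$ are handled by placing the two $\xi,\eta$ factors (which are themselves densities of products of $\k$ and $J\k$) in $L^{q'}$ via the Gagliardo--Nirenberg--Klainerman inequality of Section \ref{sec:Pf-tildekap-est}, trading derivatives/weights for decay. Feeding this into \eqref{duh} and taking $I^2$ norms,
\begin{align}\label{boot}
\|\k(t)\|_{W^b} \lesssim B + \int_0^t s^{-1-\delta}\,\|\k(s)\|_{W^b}\,ds + (\text{higher order}),
\end{align}
so that on the bootstrap interval where $\|\k(s)\|_{W^b}\le 2CB$, Gr\"onwall (or a direct continuity argument) closes the estimate \emph{provided} the implicit constants carry the smallness of $|\lambda_1|,|\lambda_2|$ — here it suffices that the free term $e^{it\Delta}\k_0e^{-it\Delta}$ already satisfies LD$_\nu$ and that the nonlinear contribution, being $O(\lambda)$ times an integrable-in-time quantity, is a small perturbation. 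This yields the uniform-in-time bound $\sup_t\|\k(t)\|_{W^b}<\infty$, which by local well-posedness and the continuation criterion of Section \ref{sec:loc-exist} upgrades LWP to GWP.

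For asymptotic completeness I would show $\{e^{-it\Delta}\k(t)e^{it\Delta}\}$ is Cauchy in $I^2$ as $t\to\infty$: from \eqref{duh},
\begin{align}\label{cauchy}
\big\|e^{-it_2\Delta}\k(t_2)e^{it_2\Delta} - e^{-it_1\Delta}\k(t_1)e^{it_1\Delta}\big\|_{I^2}
\le \int_{t_1}^{t_2}\big\|[g(\rho_{\k^*\k})(s),\k(s)]\big\|_{I^2}\,ds \lesssim \int_{t_1}^{t_2} s^{-1-\delta}\,ds \to 0,
\end{align}
using the same nonlinear estimate with the now-established global bound; completeness of $I^2$ gives the limit $\k_\infty$, and the same tail bound gives the convergence rate in \eqref{kapscat}. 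The main obstacle I anticipate is not the soft structure of the bootstrap but the bookkeeping in the commutator-Leibniz expansion of $J^\alpha[g(\rho_{\k^*\k}),\k]$: one must verify that every term produced — in particular those where several $J$'s land on the nonlinear, density-dependent factor $g(\rho_{\k^*\k})$, generating chains of G\^ateaux derivatives paired with densities of the form $\mathrm{den}(\k^*J^{\beta}\k)$ — can be estimated with a total decay exponent strictly exceeding $1$, which is precisely where the index conditions \eqref{pqcond1}--\eqref{pqcond} and \eqref{g-condbeta} are used and why the argument is sharp in $d=1,2$ but loses a bit in $d=3$. Establishing that these mixed terms obey the non-commutative Gagliardo--Nirenberg--Klainerman inequality with the right exponents is the technical heart, and it is relegated to Sections \ref{sec:Pf-tildekap-est}--\ref{sec:ener-ineq-pf}.
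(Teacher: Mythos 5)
Your proposal is correct and follows essentially the same route as the paper: the scattering part is exactly the paper's Cook-method argument (show $e^{-it\Delta}\k(t)e^{it\Delta}$ is Cauchy in $I^2$ using the integrable-in-time decay of $\|[g(\rho_{\k^*\k}),\k]\|_{I^2}\ls \|g(\rho_{\k^*\k})\|_\infty\|\k\|_{I^2}$), and the global-existence part rests on the same mechanism, except that the paper simply feeds the already-established a priori bound of Proposition \ref{prop:AprioriBnds} into the $V^k$ energy inequality \eqref{LOCEE} and applies Gronwall, rather than re-running a Duhamel bootstrap for the $W^b$ norm. One small bookkeeping slip: the decay of $\|g_1(\rho)\|_\infty$ should be $s^{-d(1-1/q)a}$ (the factor $2\nu$ already equals $d(1-1/q)$), which is integrable directly from $a(1-1/q)>1/d$ in \eqref{g-cond0}; your written exponent $2\nu a(1-1/q)$ double-counts the $(1-1/q)$ but the conclusion is unaffected.
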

%The precise formulation of LD($\k$) is given below and 
The proof of \eqref{LD->ST} is given in Section \ref{sec:loc-exist}
and relies on standard arguments. % (provided the notion of LD for $\kappa$ is strong enough, consistently with Theorem \ref{thm:loc-dec-ka}).
%In view of the discussion above and \eqref{LDk->LDg}, 
Proposition \ref{prop:LD->ST} reduces the proof of Theorem \ref{thm:GWP-ST-ka} %{thm:GWP-Scatt}
to the proof of LD$_\nu$($\k$)
(LWP is standard and given by Theorem \ref{pr:loc1}) to which we proceed. % as stated in \eqref{loc-dec-ka} of Theorem \ref{thm:loc-dec-ka}.

%: Notation $\al$ not great...
In what follows, the exponent $\al$ appearing in several interpolation type inequalities
is always assumed to satisfy the condition 
\begin{align}\label{al-cond}
\al \ge 0 \quad \text{and \quad $\al<1$ for $d$ even, \quad $ \al\le1$ for $d$ odd.} 
\end{align}

Here is the key local decay result for Eq \eqref{KS-k}: % the Hilbert-Schmidt operator $\k$:

\begin{thm}[Local decay] \label{thm:loc-dec-ka}
Assume $d\le 3$ and conditions %\eqref{SI1}-\eqref{SI2}
\eqref{g1g2}-\eqref{g-cond1'}.
Let $\k$ be the local-in-time solution of \eqref{KS-k} (see Theorem \ref{pr:loc1}), 
with initial datum $\k(t=0) =: \k_0$ satisfying \eqref{loc-dec-data}.
Then there exists $\lambda_0=\lambda_0(B)>0$ small enough such that for all $|\lambda_1|,|\lambda_2| \leq \lambda_0$
we have
\begin{align}\label{loc-dec-ka}
\|\k \|_{L^2_rL^s_c} %&\ls  t^{-2}  \iint |(j_x-\bar j_y)\k(x, y)|^2 dxd y\\
&\ls |t|^{-\al b} \|\jx^b\k_0 \|_{W^0}^\al \| \k_0 \|_{W^0}^{1-\al},
\\
\label{loc-dec-ind}
& \al b=d(\frac12-\frac1s), \quad 2 \le s \leq \infty.
\end{align}
% weight is homogeneous above...
\end{thm}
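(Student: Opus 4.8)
\textbf{Proof plan for Theorem \ref{thm:loc-dec-ka}.}
The plan is to establish the local-decay bound \eqref{loc-dec-ka} by combining an (almost) conservation law for the weighted norms $\|J^\alpha \k(t)\|_{I^2}$ with a non-commutative Gagliardo--Nirenberg--Klainerman inequality that converts control of these weighted norms into a decay estimate for the local norm $\|\k\|_{L^2_r L^s_c}$. The starting observation is that the operators $J_\ell = [j_\ell,\,\cdot\,]$ with $j_\ell = x_\ell - 2p_\ell t$ commute with the free Schr\"odinger generator in the right way: conjugation by $e^{it\Delta}$ turns $J_\ell$ into $[x_\ell,\,\cdot\,]$, so along the \emph{free} flow $\|J^\alpha \k(t)\|_{I^2}$ is exactly conserved. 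For the nonlinear equation \eqref{KS-k}, I would differentiate $\|J^\alpha \k(t)\|_{I^2}^2$ in time; the free part drops out, and what remains is a commutator of $J^\alpha$ with the nonlinearity $[g(\rho_{\k^\ast\k}),\k]$. The gauge covariance \eqref{nonlin-gauge} is exactly what makes these commutator terms manageable: $J_\ell$ acting on $[g(\rho),\k]$ produces $[g(\rho),J_\ell\k]$ plus terms involving $[[j_\ell, g(\rho)],\k]$, and the latter only sees the \emph{gradient} of $g(\rho)$ (since $[j_\ell,\cdot]$ applied to a multiplication operator is $-2t$ times a derivative plus the position multiplication, which is benign modulo the density estimates). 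This is carried out in Sections \ref{sec:approx-conservq}--\ref{sec:ener-ineq-pf}, and I would simply invoke the resulting a priori bound: under the smallness of $\lambda_1,\lambda_2$, the weighted norms $\|J^\alpha\k(t)\|_{I^2}$, $|\alpha|\le b$, stay comparable to their initial values $\lesssim B$ for all time.

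Granting that bound, the second ingredient is the interpolation inequality proved in Section \ref{sec:Pf-tildekap-est}: a non-commutative analogue of the Klainerman--Sobolev inequality which, for a Hilbert--Schmidt operator, estimates the local norm $\|\k\|_{L^2_r L^s_c}$ by a product of $\|\k\|_{I^2}^{1-\theta}$ and $\big(\sum_{|\alpha|\le b}\|J^\alpha\k\|_{I^2}\big)^{\theta}$ with a time-decay factor $|t|^{-\nu}$ where $\nu = d(\tfrac12-\tfrac1s)$ and $\theta = \nu/b$. The point is that $J_\ell = e^{it\Delta}[x_\ell,\cdot]e^{-it\Delta}$, so a bound on $\|J^\alpha\k\|_{I^2}$ is, after conjugating out the free flow, a bound on $\|\,|x|^{|\alpha|}\,e^{-it\Delta}\k\,e^{it\Delta}\,\|$ in a trace norm; the usual Sobolev embedding in the dilated variable $(x/t)$ then produces exactly the $t^{-d(1/2-1/s)}$ gain, and writing the kernel in the $(r,c)$ coordinates \eqref{kappacoord} translates this into the stated local norm. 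The exponent condition \eqref{al-cond} on $\alpha$ (equivalently $\theta b = d(\tfrac12-\tfrac1s)$ with $\theta\le 1$, and $\theta<1$ when $d$ is even) is exactly the range in which this interpolation is available with $b>d/2$ an integer.

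Putting the two pieces together: apply the interpolation inequality to $\k(t)$, bound the weighted factor $\sum_{|\alpha|\le b}\|J^\alpha\k(t)\|_{I^2}$ by (a constant times) $\|\jx^b\k_0\|_{W^0}$ using the almost-conservation law, leave the $\|\k(t)\|_{I^2} = \|\k_0\|_{I^2}$ factor untouched (the $I^2$ norm is exactly conserved, since \eqref{KS-k} is a commutator flow by a self-adjoint generator), and read off \eqref{loc-dec-ka}--\eqref{loc-dec-ind} with $\al b = d(\tfrac12-\tfrac1s)$. For $s=2$ this is the trivial conservation statement, and for $s$ up to $\infty$ (with the endpoint caveat in even dimensions coming from \eqref{al-cond}) it gives the claimed rate. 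I would then note that $\nu = d(\tfrac12 - \tfrac1s) > 1$ for $s$ close enough to the critical value precisely under the subcriticality conditions \eqref{g-cond0} and \eqref{g-condbeta} on $\beta$, so that Proposition \ref{prop:LD->ST} applies and yields Theorem \ref{thm:GWP-ST-ka}.

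\textbf{Main obstacle.} The delicate step is the almost-conservation law: when $J^\alpha$ is distributed over the nonlinearity $[g(\rho_{\k^\ast\k}),\k]$, one must show that every resulting term, including those where derivatives land on $\rho_{\k^\ast\k}$ and those that are genuinely nonlocal (the Hartree convolution), is integrable in time against the decay one is trying to prove --- i.e. the estimate must close. This requires the density estimates of Section \ref{sec:den-est} to control $\|g(\rho_{\k^\ast\k})\|_\infty$ and $\|dg\,\xi\|_p$ in terms of local norms of $\k$, the gauge covariance \eqref{nonlin-gauge} to ensure no ``worst'' term (with $b$ derivatives on $g$ and none absorbed) actually appears, and the smallness of $\lambda_1,\lambda_2$ to absorb the quadratic-and-higher contributions into the left-hand side via a continuity/bootstrap argument. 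The interpolation inequality itself, while the technical heart of Section \ref{sec:Pf-tildekap-est}, is conceptually a direct transcription of Klainerman--Sobolev and should not pose a genuine difficulty once the correct non-commutative local norms \eqref{kap-loc-norm} are in place.
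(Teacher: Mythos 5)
Your proposal follows essentially the same route as the paper: Theorem \ref{thm:loc-dec-ka} is obtained by combining the non-commutative Gagliardo--Nirenberg--Klainerman inequality of Proposition \ref{prop:tildekap-est} with the a priori weighted bound of Proposition \ref{prop:AprioriBnds} (whose proof rests on the approximate Galilean conservation law and the density estimates, exactly as you describe), together with the exact conservation of $\|\k(t)\|_{I^2}$ under the unitary commutator flow. The only cosmetic difference is that the paper proves the interpolation inequality by conjugating $J$ into $-2itD$ via the quadratic phase $e^{-ix^2/4t}$ and applying Gagliardo--Nirenberg in the $c$-variable, rather than by conjugating out the free flow, but this does not change the argument.
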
  

%: Drop?
%{\bf(the proof of Theorem \ref{thm:LocDec} would be here, if we do not drop this thm; 
%it follows from Theorem \ref{thm:loc-dec} and the inequality $\|\g\|_{(s)}\ls \|\k \|_{L^2_rL^s_c}^2$ 
%discussed after \eqref{kap-loc-norm})}

%: TO DO

\begin{proof}[Proof of Theorem \ref{thm:GWP-ST-ka}] %{thm:GWP-Scatt}%GWP and scattering for \eqref{KS}
A local-in-time solution to \eqref{KS-k}, under the conditions stated in Theorem \ref{thm:loc-dec-ka}, 
is provided by the local existence Theorem \ref{pr:loc1}.
%Propositions \ref{prop:LD->ST} and Theorems \ref{thm:loc-dec-ka} (or, more precisely, uniform bounds on the $W^k$-norm $k\geq [d/2]+1$) and \ref{pr:loc1} immediately imply {\it GWP} and {\it scattering}. % as we show in Section \ref{sec:loc-exist}.

The global existence and scattering for \eqref{KS-k} %of a solution $\k$ for \eqref{KS-k}  with initial datum $\k_0$,
follow from the local existence Theorem \ref{pr:loc1},  the local decay from Theorem \ref{thm:loc-dec-ka} 
and Proposition \ref{prop:LD->ST}.
%Then, the global existence for \eqref{KS} with initial datum $\gamma_0 = \k^\ast_0\k_0$, follows immediately from $\g = \k^\ast \k$ and the uniqueness of solutions for \eqref{KS}.
%\comment{uniqueness for $\g$ is a separate issue... need to look into that}
%The scattering property for $\g$ in $I^1$, as stated in \eqref{SRscat}, follows directly from the scattering of $\k$ in $I^2$, see \eqref{kapscat},  by letting $\g_\infty=\k^\ast_\infty \k_\infty$.
%is a consequence of the definitions \eqref{gam-loc-norm}, \eqref{kap-loc-norm}
\end{proof}

The proof of Theorem \ref{thm:loc-dec-ka} will follow from 
a non-commutative version of a weighted Gagliardo-Nirenberg-Klainerman-type interpolation inequality
(Proposition \ref{prop:tildekap-est}) and an a priori energy estimate in the weighted space $W^b$ 
(Proposition \ref{prop:AprioriBnds}).

%The next result is a  Gagliardo-Nirenberg-Kleinerman-type inequality: %{\bf(do not require $b$ to be an integer)}
\begin{prop}\label{prop:tildekap-est} 
For any $s\ge 2$ %, $b\ge 0$
 and $\al \in[0,1)$ for $d$ even, $\al \in [0,1]$ for $d$ odd, we have
\begin{align}\label{L2Ls-est} %{tildekap-est}
\|\k(t) \|_{L^2_rL^s_c}
  & \ls |t|^{-\al b} \|\k(t) \|_{W^b}^\al \| \k(t) \|_{W^0}^{1-\al},\qquad \al b=d(\frac12-\frac1s).
\end{align}
%for $b$ positive integer.
\end{prop}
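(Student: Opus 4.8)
The plan is to prove Proposition \ref{prop:tildekap-est} as a non-commutative analogue of the Klainerman--Sobolev inequality, exploiting the identity $J_\ell = [x_\ell - 2p_\ell t, \,\cdot\,]$ together with the change of variables $(r,c) = (y-x, (x+y)/2)$ from \eqref{kappacoord}. The starting observation is that the commutator $[j_\ell, \k]$ has integral kernel $(x_\ell - y_\ell)\k(x,y) + 2it(\partial_{x_\ell} + \partial_{y_\ell})\k(x,y)$; in the $(r,c)$ coordinates, $x_\ell - y_\ell = -r_\ell$ and $\partial_{x_\ell} + \partial_{y_\ell} = \partial_{c_\ell}$, so the kernel of $J_\ell\k$ is exactly $(-r_\ell + 2it\partial_{c_\ell})\tilde\k(r,c)$. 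Hence differentiation with respect to the center variable $c$ is, up to the multiplication operator by $r$ and a factor $2it$, the same as applying $J$. This is the key structural point: weighted norms of $\k$ in the $c$-variable are controlled by $\|\k\|_{W^b}$, with a gain of $t^{-1}$ per derivative, while the $r$-variable remains untouched (it only appears through the harmless factor $r_\ell$, which we will keep inside an $L^2_r$ norm where the Hilbert--Schmidt structure lives).

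The main steps, in order. First, I would record the Hilbert--Schmidt identity $\|\k\|_{W^0}^2 = \|\k\|_{I^2}^2 = \int |\k(x,y)|^2\,dx\,dy = \int |\tilde\k(r,c)|^2\,dr\,dc$ (the Jacobian of the linear change of variables is $1$), so that $W^0 = I^2 = L^2_{r,c}$ isometrically, and likewise $\|J^\alpha\k\|_{I^2}^2 = \int |(\,(-r+2it\partial_c)^\alpha\tilde\k\,)(r,c)|^2\,dr\,dc$. Second, for fixed $r$, I would view $\tilde\k(r,\cdot)$ as a function of $c \in \R^d$ and apply the classical Gagliardo--Nirenberg--Sobolev inequality in the $c$-variable: for $s \geq 2$ and $b > d/2$, one has $\|u\|_{L^s_c} \lesssim \|u\|_{L^2_c}^{1-\theta}\|\nabla_c^b u\|_{L^2_c}^{\theta}$ (plus, if needed, intermediate derivatives, which are themselves interpolated away) with $\theta b = d(1/2 - 1/s)$ — here $\theta \leq 1$ for $d$ odd, $\theta < 1$ for $d$ even, precisely matching condition \eqref{al-cond} and the constraint $\al b = d(1/2-1/s)$ in the statement; note $\al$ plays the role of $\theta$. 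Third, I would insert the kernel relation from the first step: each $\nabla_c$ acting on $\tilde\k$ produces, modulo the $r$-multiplication which is bounded into $L^2_r$ by the very definition of these local norms, a factor $(2t)^{-1}$ times a component of $J\k$; so $\|\nabla_c^b\tilde\k(r,\cdot)\|_{L^2_c} \lesssim |t|^{-b}\big(\text{combination of }\|J^\alpha\k(r,\cdot)\|_{L^2_c}\text{ with lower-order }r\text{-weights}\big)$. Fourth, I would take the $L^2_r$ norm of the resulting pointwise-in-$r$ inequality, using Hölder in $r$ to distribute the product $\|\cdot\|_{L^2_c}^{1-\al}\|\cdot\|_{L^2_c}^{\al}$ (Hölder with exponents $1/(1-\al)$ and $1/\al$) and then recognizing $\|\,\|\tilde\k(r,\cdot)\|_{L^2_c}\,\|_{L^2_r} = \|\k\|_{W^0}$ and $\|\,\|J^\alpha\k(r,\cdot)\|_{L^2_c}\,\|_{L^2_r} \lesssim \|\k\|_{W^b}$, the latter after the $r$-weights are absorbed — this is where one must be slightly careful, because the factor $r_\ell$ multiplying a derivative is exactly what the $L^2_r$ integration is designed to swallow, so no extra weight on $\k$ is needed.

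The step I expect to be the main obstacle is the bookkeeping in the third and fourth steps: the operator $(-r + 2it\partial_c)^\alpha$ expands by the (commuting, since $r_\ell$ and $\partial_{c_m}$ as operators on functions of $(r,c)$ do commute) binomial theorem into a sum of terms $r^{\beta}(2it)^{|\alpha|-|\beta|}\partial_c^{\alpha-\beta}\tilde\k$, so controlling a pure derivative $\partial_c^b\tilde\k$ in terms of the $J^\alpha\k$ requires inverting this triangular system, i.e.\ expressing $\partial_c^b\tilde\k$ as $(2it)^{-b}\sum_{\beta \le b}\binom{b}{\beta}r^{\beta}(2it)^{|\beta|}(J\text{-terms})$ — the top term gives the claimed $|t|^{-b}$ decay, and the lower-order $J$-terms come with fewer powers of $t^{-1}$ but also fewer $c$-derivatives, so they are handled by interpolating the GNS inequality at intermediate orders and are strictly better for $|t| \geq 1$ (the regime of interest; for $|t| \le 1$ the bound is trivial since $W^b \hookrightarrow W^0$ and the power of $t$ is harmless). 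Organizing this so that the final exponent on $|t|$ is exactly $-\al b$ with $\al b = d(1/2-1/s)$, and so that the $L^2_r$ integration closes without generating any weight on $\k$ other than those already built into $W^b$, is the only genuinely delicate point; everything else is the classical Klainerman--Sobolev argument transported verbatim to the $c$-variable.
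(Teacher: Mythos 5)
Your kernel identity is right: in the $(r,c)$ variables the kernel of $J_\ell\k$ is $(2it\p_{c_\ell}-r_\ell)\tilde\k$, and the outer skeleton of your argument (Gagliardo--Nirenberg in $c$ for each fixed $r$, then H\"older in $r$ with exponents $1/\al$ and $1/(1-\al)$) coincides with the paper's. The gap is in the step where you convert $\p_c^b\tilde\k$ into $J$'s. Inverting $2it\p_{c_\ell}=(\text{kernel of }J_\ell)+r_\ell$ gives
\begin{align*}
\p_c^b\tilde\k \;=\; (2it)^{-b}\sum_{\beta\le b}\binom{b}{\beta}\,r^{\beta}\,\widetilde{J^{b-\beta}\k},
\end{align*}
and every term with $|\beta|\ge1$ carries an unbounded polynomial weight in $r$. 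The $L^2_r$ integration does not swallow such a weight: it is the plain, unweighted $L^2$ norm in $r$, and $\|r^{\beta}\widetilde{J^{b-\beta}\k}\|_{L^2_{r,c}}$ equals the $I^2$ norm of (commutators of $J^{b-\beta}\k$ with $x$), which is not controlled by $\|\k\|_{W^b}$. Concretely, for $b=1$ one has $\p_{c}\tilde\k=(2it)^{-1}\bigl(\widetilde{J\k}+r\tilde\k\bigr)$ with $\|r\tilde\k\|_{L^2_{r,c}}=\|[x,\k]\|_{I^2}$, and $[x,\k]=J\k-2it[\n,\k]$, so this term costs a factor $|t|$ that exactly cancels the $|t|^{-1}$ you are trying to gain (and requires control of $\|[\n,\k]\|_{I^2}$, which is not part of $W^1$). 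The ``lower-order'' terms are therefore not lower order at all; they destroy the decay. Indeed, without further input your computation only yields the $t$-independent inequality \eqref{GNI-non-ab}, since $\p_{c}\tilde\k$ is exactly the kernel of $D\k=[\n,\k]$.

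The missing idea is to insert the unimodular phase $e^{ir\cdot c/2t}$ before applying Gagliardo--Nirenberg; equivalently, to conjugate $\k$ by the unitary multiplication operator $U_t:\psi\mapsto e^{-i|x|^2/4t}\psi$, which is the content of the identity $J_t=-2it\,U_t^*DU_t$ in \eqref{J-D}. Since the phase has modulus one, all the $L^p_c$ and $L^2_r$ norms of the kernel are unchanged, while
\begin{align*}
2it\,\p_{c_\ell}\bigl(e^{ir\cdot c/2t}\tilde\k\bigr)= e^{ir\cdot c/2t}\,(2it\p_{c_\ell}-r_\ell)\tilde\k = e^{ir\cdot c/2t}\,\widetilde{J_\ell\k},
\end{align*}
so the $c$-derivative of the phase-modified kernel is \emph{exactly} $(2it)^{-1}$ times the kernel of $J_\ell\k$: no binomial remainder, no $r$-weights. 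Running your steps two through four on $e^{ir\cdot c/2t}\tilde\k$ instead of $\tilde\k$ then gives \eqref{L2Ls-est} directly; this phase conjugation is precisely how the paper passes from the $D$-version \eqref{GNI-non-ab} to the $J$-version with the $|t|^{-\al b}$ gain.
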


This proposition is proven in Section \ref{sec:Pf-tildekap-est}. The next result gives a priori energy inequalities.
\DETAILS{The main idea here is first to extend the Gagliardo-Nirenberg inequality to non-abelian spaces with the derivations replaces by the commutators %{DJ}
\begin{align}\label{D}
D_\ell\k & := [\partial_{x_\ell} ,\k], \qquad D =(D_1,\dots,D_d) = [\n,\,\cdot\,], \end{align}
 and then observe that 
%\begin{align}\label{D-J-rel} 
$ J=-2it  U^*DU$, where $J$ is the commutator defined in  \eqref{J'} and  $U: \psi(x)\ra  e^{- i x^2/4t}\psi(x)$.}

\begin{prop}[A priori bounds] \label{prop:AprioriBnds}
 %Let $v\in L^r$, with $r<d$, $\|v \|_{L^r}$ is small and
 Assume  $d\le 3$. 
 Let $b> d/2$ be an integer, 
 and $\lambda_1,\lambda_2$ denote the coupling constants in \eqref{g1g2}. 
There exists an absolute constant $c_0$ such that,
any solution $\k$ to equation \eqref{KS-k} which satisfies for some time $s\geq 10$ %$s\geq0$, 
\begin{align}\label{eps-cond} 
|\lambda_1| %\| \k(s)\|_{W^0} 
  \| \k(s) \|_{W^b}^2  +  |\lambda_2| \| \k(s) \|_{W^b}^{2\beta} 
  	< \frac{c_0}{2^{\max\{3,2\beta+1\}}}, 
\end{align}
also satisfies for any $t>s$ %(for any $b\ge 0$)
\begin{align}\label{Jb-est}
& \| \k(t)\|_{W^b} \leq 2 \| \k(s)\|_{W^b}. %\| J^b\k\|_{HS}\ls \| x^b\k_0\|_{HS}
\end{align}  
 
%Assume that for $T>0$ we have
%\begin{align}\label{Jb-est1}
%& \sup_{t\in[0,T]} \| J^k \k(t)\|_{HS} + \| D^k \k(t)\|_{HS} \leq A,
%\end{align} 
%for all $0\leq k \leq b$ with some $b \geq [d/2]+1$.
%Then,  there exists a constant $C=C(d,b)$ such that, for all $t\in[0,T]$,
%\begin{align}\label{Jb-est2}
%& \| J^k \k(t)\|_{HS} + \| D^k \k(t)\|_{HS} \leq \| J^k \k_0\|_{HS} + \| D^k \k_0\|_{HS} + C \e A^3.
%\end{align} 
%Here $\e$ is the implicit small constant in the assumptions \eqref{g-cond0}-\eqref{g-cond2}
\end{prop}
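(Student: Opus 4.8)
The plan is to prove \eqref{Jb-est} by a continuity (bootstrap) argument fed by the weighted-energy inequality of Section~\ref{sec:approx-conservq}. Abbreviate $M:=\|\k(s)\|_{W^b}$ and let $[s,T)$ be the maximal interval on which the local solution of Theorem~\ref{pr:loc1} exists as a continuous curve in $W^b$. Set
\[
T^\ast:=\sup\big\{\,t\in[s,T)\ :\ \|\k(\tau)\|_{W^b}\le 2M\ \text{ for all }\ \tau\in[s,t]\,\big\}.
\]
Since $\|\k(s)\|_{W^b}=M<2M$ and $\tau\mapsto\|\k(\tau)\|_{W^b}$ is continuous, $T^\ast>s$. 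It suffices to improve the bootstrap bound on $[s,T^\ast)$ to $\|\k(t)\|_{W^b}\le\sqrt2\,M$: then $T^\ast$ cannot be an interior point of $[s,T)$ (continuity would contradict maximality of $T^\ast$), so $T^\ast=T$ and \eqref{Jb-est} holds throughout the interval of existence, in particular for every $t>s$ for which $\k(t)$ is defined.

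The core input is the differential inequality for $\mathcal{E}_b(t):=\|\k(t)\|_{W^b}^2$ derived in Section~\ref{sec:approx-conservq}. Differentiating along \eqref{KS-k} and using that each $J_\ell=[j_\ell,\cdot]$ is a derivation which commutes with the free flow --- this is the point of the time-dependent shift in $j_\ell=x_\ell-2p_\ell t$, see \eqref{J'} --- one sees, together with the self-adjointness of $-\Del+g(\rho)$, that the kinetic term and the ``diagonal'' nonlinear term $i[g(\rho),J^\al\k]$ drop out of $\re\lan J^\al\k,\p_tJ^\al\k\ran_{I^2}$, leaving only commutator terms of the schematic shape $\lan J^\al\k,\,i[\,J^{\al'}g(\rho),\,J^{\al-\al'}\k\,]\ran_{I^2}$ with $1\le|\al'|\le|\al|\le b$. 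Since $[j_\ell,g(\rho)]$ is (up to sign) $2t$ times multiplication by $\p_\ell g(\rho)$, one has $\|J^{\al'}g(\rho)\|_{\mathrm{op}}\ls|t|^{|\al'|}\|\p_x^{\al'}g(\rho)\|_\infty$, and the density estimates of Section~\ref{sec:den-est} combined with the non-abelian Gagliardo--Nirenberg--Klainerman inequality (Proposition~\ref{prop:tildekap-est}, applied with $\al=1$ for $d$ odd and with $\al<1$, $b>d/2$, for $d$ even) bound this by $|\lam_1|\psi_1(t)\|\k(t)\|_{W^b}^2+|\lam_2|\psi_2(t)\|\k(t)\|_{W^b}^{2\beta}$ with $\psi_1,\psi_2\ge0$ and $\int_1^\infty\psi_i<\infty$. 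The integrability of $\psi_1,\psi_2$ is precisely the scattering-subcriticality of the nonlinearity (cf. \eqref{SI2}/\eqref{g-condbeta}; morally $\|g(\rho)\|_\infty\sim t^{-1-\del}$ for some $\del>0$), and $s\ge 10$ keeps us in the large-time range where the interpolation and density bounds apply. Summing the finitely many terms with $|\al|\le b$ yields, on $[s,T^\ast)$ and with $\psi:=\psi_1+\psi_2$,
\[
\frac{d}{dt}\,\mathcal{E}_b(t)\ \le\ C_b\,\psi(t)\,\big(|\lam_1|\,\mathcal{E}_b(t)+|\lam_2|\,\mathcal{E}_b(t)^{\beta}\big)\,\mathcal{E}_b(t),
\]
the constant $C_b$ depending only on $d$ and $b$ (the combinatorial constant from $|\al|\le b$ is absorbed here).

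To close the loop, on $[s,T^\ast)$ the bootstrap hypothesis gives $\mathcal{E}_b(t)\le(2M)^2$, whence
\[
|\lam_1|\,\mathcal{E}_b(t)+|\lam_2|\,\mathcal{E}_b(t)^{\beta}\ \le\ |\lam_1|(2M)^2+|\lam_2|(2M)^{2\beta}\ \le\ 2^{\max\{2,2\beta\}}\big(|\lam_1|M^2+|\lam_2|M^{2\beta}\big)<\tfrac12 c_0
\]
by \eqref{eps-cond} --- here the exponent $\max\{3,2\beta+1\}$ in \eqref{eps-cond} is chosen exactly so that the last step produces the factor $\tfrac12$, using $\max\{3,2\beta+1\}=\max\{2,2\beta\}+1$. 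Hence $\frac{d}{dt}\mathcal{E}_b\le \tfrac12 C_b c_0\,\psi(t)\,\mathcal{E}_b$ on $[s,T^\ast)$, and choosing the constant $c_0$ so small that $\tfrac12 C_b c_0\int_1^\infty\psi\le\log 2$, Grönwall's inequality gives $\mathcal{E}_b(t)\le\mathcal{E}_b(s)\exp\!\big(\tfrac12 C_b c_0\!\int_s^\infty\psi\big)\le 2\,\mathcal{E}_b(s)$, i.e. $\|\k(t)\|_{W^b}\le\sqrt2\,M$, as needed. This completes the bootstrap and the proof. The analytic substance --- the non-abelian Klainerman--Sobolev inequality with a sharp enough decay exponent, and the density estimates making $\psi_1,\psi_2$ integrable --- sits in Sections~\ref{sec:Pf-tildekap-est}, \ref{sec:den-est} and \ref{sec:approx-conservq}; granting those, the present statement is a soft Grönwall/continuity argument, and the only genuine care is the constant bookkeeping ensuring that the threshold \eqref{eps-cond} closes the loop, which I regard as the ``main obstacle'' internal to this proof.
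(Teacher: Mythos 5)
Your proof is correct and follows essentially the same route as the paper's: both take the weighted energy inequality of Lemma \ref{lem:energy-est} as the analytic input and close a continuity/bootstrap argument using the smallness threshold \eqref{eps-cond}. The only difference is cosmetic --- you linearize the differential inequality under the bootstrap hypothesis and apply Gr\"onwall to $\|\k\|_{W^b}^2$, whereas the paper first integrates in time to obtain the algebraic inequality \eqref{Apr0} for $A(t)=\sup_{s\le r\le t}\|\k(r)\|_{W^b}$ and then closes that directly.
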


%\comment{The power 3 is for the standard case of linear $g(\rho)$}

\DETAILS{\begin{remark}
Proposition \ref{prop:AprioriBnds} and Theorem \ref{thm:loc-dec-ka} {\bf can be extended} to non-integer $b$'s
by using the formula
\[ J^b_\ell \k = (-i\p_s)^b\Big|_{s=0} e^{is J_\ell} 
= (-i\p_s)^b\Big|_{s=0} e^{is  j_\ell} \kappa e^{-is  j_\ell} \]
\end{remark}}

\begin{remark}
We do not need a smallness condition $|\lambda_1|,|\lambda_2| \ll 1$ in Theorem \ref{thm:loc-dec-ka} 
if we start at a sufficiently large time $t_0$.
We can then solve the final state problem in our setting without assuming weakly nonlinear interactions.
% More details?
\end{remark}

Proposition \ref{prop:AprioriBnds} is proven in Section \ref{sec:ener-ineq-pf}. %{sec:Aprioripr}.
The main idea here is to use that the Galilean boost observable $J$ is almost conserved under \eqref{KS-k},
see Proposition \ref{prop:jk-evol}.
We remark that the {\it gauge invariance} \eqref{nonlin-gauge} of the nonlinearity, 
and more precisely the invariance of \eqref{KS-k} under the Galilean transformations \eqref{gal-boost}
%where $U_{v, t}: \psi(x, t) \ra e^{i( v\cdot x -\frac{1}{2} |v|^2 t)}\psi(x-2 vt, t)$,
plays an important role in this proof.

%\begin{prop}\label{prop:kap-est}
%For $p \geq 1$ and $q > 2$ and $s \geq 2$, we have the estimates
%\begin{align}
%\label{LqL2W-est1}
%&  {\| \k \|}_{L^{q}_x L^2_y}^2 
%  \lesssim  t^{-1}  {\| J \k \|}_{W^0}  {\| \k \|}_{L^{2p/(2-p)}_x L^2_y},  \qquad 2/q = 1/p - 1/d,
%\\ 
%\label{LsL2-est}
%&\|\k(t) \|_{L^s_xL^2_y}
%   \ls |t|^{-\al b} \|\k(t) \|_{W^b}^\al \| \k(t) \|_{W^0}^{1-\al},\qquad \al b=d(\frac12-\frac1s),
%\end{align}
%for $b$ positive integer {\bf(use interpolation?)}, and with $\al \in[0,1)$ for $d$ even, $\al \in [0,1]$ for $d$ odd.
%\end{prop}

%(MS): Generalize above inequalities for higher and fractional derivatives.

%: Proof of Main Th
\DETAILS{\begin{proof}[Proof of Theorem \ref{thm:GWP-Scatt}]
%GWP and scattering for \eqref{KS}]
The global existence of a solution $\k$ for \eqref{KS-k} 
with initial datum $\k_0$, follows from the local existence Theorem \ref{pr:loc1}, 
the local decay from Theorem \ref{thm:loc-dec} and Proposition \ref{prop:LD->ST}.
Then, the global existence for \eqref{KS} with initial datum $\gamma_0 = \k^\ast_0\k_0$, follows immediately from $\g = \k^\ast \k$ and the uniqueness of solutions for \eqref{KS}.
%\comment{uniqueness for $\g$ is a separate issue... need to look into that}

The scattering property for $\g$ in $I^1$, as stated in \eqref{SRscat}, 
follows directly from the scattering of $\k$ in $I^2$, see \eqref{kapscat}, 
by letting $\g_\infty=\k^\ast_\infty \k_\infty$.
%is a consequence of the definitions \eqref{gam-loc-norm}, \eqref{kap-loc-norm}
\end{proof}}

%\begin{remark} 
The following lemma, which is used in the proof of the local decay for $\g$ and of Lemma \ref{lem:energy-est}, 
establishes a relation between local $rc$- and $xy$-norms: 
%, ${\| \k \|}_{L^s_x L^2_y} := {\| {\| \k(x,y) \|}_{L^2_y}\|}_{L^s_x}$:  
%(which is also used in the proof of Lemma \ref{lem:energy-est}).  %Eq \eqref{LsL2-est}
 
\begin{lemma}\label{lem:xyrc}
For all $s\geq 2$ we have
\begin{align}\label{xyrc}
{\| \k \|}_{L^s_x L^2_y} \leq {\| \k \|}_{L^2_rL^s_c},
\ \text{ where } {\| \k \|}_{L^s_x L^2_y} := {\| {\| \k(x,y) \|}_{L^2_y}\|}_{L^s_x}.
\end{align}
\end{lemma}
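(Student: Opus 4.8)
The inequality $\|\k\|_{L^s_xL^2_y}\le\|\k\|_{L^2_rL^s_c}$ is a change-of-variables plus Minkowski-type argument, and the key point is that both coordinate systems are volume-preserving linear changes on $\R^d\times\R^d$. First I would record that, with $r=y-x$ and $c=\tfrac12(y+x)$, one has $x=c-\tfrac12 r$, $y=c+\tfrac12 r$, and the Jacobian of $(x,y)\mapsto(r,c)$ is $1$ (it is a triangular-in-blocks map with unit diagonal blocks, up to the harmless factor coming from the $\tfrac12$'s, which also has absolute Jacobian $1$); hence $\int F(x,y)\,dx\,dy=\int \tilde F(r,c)\,dr\,dc$ for any integrable $F$, where $\tilde F(r,c):=F(c-\tfrac12 r,\,c+\tfrac12 r)$, matching the definition \eqref{kappacoord}.

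The substance is then the following: for fixed $x$, the inner $L^2_y$ norm of $\k(x,\cdot)$ equals the $L^2_r$ norm of $\tilde\k(r,\,x+\tfrac12 r)$ (again by translation-invariance of Lebesgue measure in $y$ for fixed $x$, i.e.\ $y\mapsto r=y-x$). So I want
\[
\|\k\|_{L^s_xL^2_y}=\Big\|\,\big\|\tilde\k(r,x+\tfrac{r}{2})\big\|_{L^2_r}\,\Big\|_{L^s_x}.
\]
Now I would apply Minkowski's integral inequality to interchange the inner $L^2_r$ with the outer $L^s_x$, using $s\ge 2$ so that the norm order $L^2(L^s)$ dominates $L^s(L^2)$:
\[
\Big\|\,\big\|\tilde\k(r,x+\tfrac{r}{2})\big\|_{L^2_r}\,\Big\|_{L^s_x}
\le \Big\|\,\big\|\tilde\k(r,x+\tfrac{r}{2})\big\|_{L^s_x}\,\Big\|_{L^2_r}.
\]
Finally, for each fixed $r$ the map $x\mapsto c=x+\tfrac12 r$ is a translation, so $\|\tilde\k(r,x+\tfrac r2)\|_{L^s_x}=\|\tilde\k(r,c)\|_{L^s_c}$, and the right-hand side is exactly $\|\tilde\k\|_{L^2_rL^s_c}=\|\k\|_{L^2_rL^s_c}$, which is \eqref{xyrc}.

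The only genuinely delicate point is the direction of Minkowski's inequality: it requires the \emph{outer} exponent ($s$) to be at least the \emph{inner} exponent ($2$), which is precisely the hypothesis $s\ge 2$; for $s<2$ the inequality would go the other way. Everything else is bookkeeping with linear changes of variables, which I would carry out first at the level of Schwartz (or simple) kernels and then extend to general Hilbert–Schmidt $\k$ by density, noting that for $s=2$ both sides equal $\|\k\|_{I^2}$ by Plancherel/Fubini, so the estimate is consistent at the endpoint.
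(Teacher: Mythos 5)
Your proof is correct and is essentially the paper's own argument: the paper introduces $f(x)=\int|\tilde\k(r,x-r/2)|^2\,dr$ and applies the integral triangle inequality in $L^{s/2}_x$, which is exactly the mixed-norm Minkowski interchange $L^s_xL^2_r\le L^2_rL^s_x$ (valid for $s\ge2$) that you invoke, followed by the same translation-invariance step in $c$. No gaps; the identification of $s\ge 2$ as the hypothesis driving the direction of Minkowski is the right key point.
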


\begin{proof}
Recall the notation \eqref{kappacoord} and introduce the function %{\bf(or transform)}
\begin{align}\label{ka-f-transf}
f(x) := {\| \k \|}_{L^2_y}^2(x)= \int |\k(x,y)|^2 dy 
  = \int |\tilde\k(r, x-r/2)|^2 dr. %, \qquad I^4 = {\| f \|}_{L^2_x}^2
\end{align}
Then
%\begin{align*}
${\| \k \|}_{L^s_x L^2_y}^2 = {\| f \|}_{L^{s/2}} \leq \int {\big\| |\tilde\k(r, x-r/2)|^2 \big\|}_{L^{s/2}_x} dr
   = \int {\big\| \tilde\k(r, \cdot) \big\|}_{L^{s}_c}^2 dr
   = {\| \tilde\k \|}_{L^2_rL^s_c}^2$.
%\end{align*}
\end{proof}
%\end{remark}

\noindent
{\bf Local decay for $\g$}.
We end this section by discussing local decay for $\gamma=\k^\ast\k$, which is of independent interest,
although it is not used in the proof of Theorem \ref{thm:GWP-Scatt}.
 
\begin{definition}[Local norms for $\g$]
We define the local norm of operators $\g$ through their integral kernels $\gamma(x,y)$ as
\begin{align}\label{gam-loc-norm}
\|\g\|_{(s)}:=\left(\int_{\R^d\times\R^d} |\gamma(x,y)|^s \, dxdy\right)^{1/s},
\end{align}
with the standard adjustment for $s=\infty$.
%Can also have $L^{s_1}_x L^{s_2}_y$
\end{definition}
%\begin{remark}[Local decay for $\g$]  
%\begin{itemize} %\item[(1)] 
%With this definition, we have, schematically,
%\[ \mbox{LD($\k$) $\Rightarrow $ LD($\g$)}. \]
%This implication holds by our choice of local norms, see \eqref{gam-loc-norm},
% below, %when we give the natural interpretation of the LD property for $\g$ and $\k$ which is consistent with the inequality
Note that
%We can write the proof if we want
\begin{align} \label{LDk->LDg}
{\| \gamma \|}_{(s)} \leq {\| \k \|}_{L^s_x L^2_y}^2, 
  %\qquad {\| \k \|}_{L^s_x L^2_y} := {\| {\| \k(x,y) \|}_{L^2_y}\|}_{L^s_x}
%\leq {\| \k \|}_{L^2_rL^s_c}^2,
\end{align}
%where $\k(x,y)$ is the integral kernel of $\k$,
%the definition \eqref{kap-loc-norm} 
%Moreover, we have the following simple lemma (which is also used in the proof of Lemma \ref{lem:energy-est}). %Eq \eqref{LsL2-est}.
Now, \eqref{LDk->LDg}, \eqref{xyrc}, and Theorem \ref{thm:loc-dec-ka} 
imply that the solutions of \eqref{KS} described in Theorem \ref{thm:GWP-Scatt} have the following local decay property: 
for all $t\in\R$,
\begin{align}\label{loc-dec'}
&\|\g(t) \|_{(s)} \ls %|t|^{-d(1-2/s) } \|\jx^a\g_0 \jx^a\|_{I^1},  %\qquad b > d(1/2-1/s), 2\le s\le \infty.
|t|^{-d(1-2/s)}, \qquad 2 \le s \le \infty .
\end{align}

%\end{remark}

\DETAILS{Finally, to obtain the local decay property \eqref{loc-dec'} for $\g = \k^\ast\k$, observe that 
\begin{align*}
{\| \g \|}_{(s)} \leq {\| \k \|}_{L^s_xL^2_y}^{2} 
  \leq {\| \k \|}_{L^2_r L^s_c }^{2} \lesssim |t|^{-d(1-2/s)},
\end{align*} 
having used \eqref{loc-dec-ka}, %\eqref{L2Ls-est},
with the implicit constant depending on the initial datum \eqref{loc-dec-data}.}

\DETAILS{
%Older alternative:
Finally, to obtain the local decay property \eqref{loc-dec'} for $\g = \k^\ast\k$, observe that 
\begin{align*}
{\| \g \|}_{(2)} = {\| \k \|}_{I^2}^2, \qquad {\| \g \|}_{(\infty)} \leq {\| \k \|}_{L^2_r L^\infty_c}^2,
\end{align*} 
so that interpolation and \eqref{loc-dec} in Theorem \ref{thm:loc-dec} give
\begin{align*}
{\| \g \|}_{(s)} \leq {\| \g \|}_{(2)}^{2/s} {\| \g \|}_{(\infty)}^{1-2/s} 
\lesssim {\| \k \|}_{I^2}^{4/s} {\| \k \|}_{L^2_r L^\infty_c }^{2(1-2/s)} \lesssim |t|^{-d(1-2/s)},
\end{align*} 
where the implicit constant depend on the initial datum \eqref{loc-dec-data}.}

\bigskip
\section{Proof of Proposition \ref{prop:tildekap-est}}\label{sec:Pf-tildekap-est} 
%\paragraph{The standard inequalities.}\label{sec:GNKI}  
%\begin{proof}[Proof of Proposition \ref{prop:tildekap-est}] 
%{\bf (a new proof)} %As was mentioned after Proposition \ref{prop:tildekap-est}, 
The main idea here is first to extend the Gagliardo-Nirenberg inequality to the non-abelian Sobolev-type spaces
\begin{align}\label{LOC1'} 
V^s &:= \big\{ \k \in I^2\, : \, \sum_{|\alpha| \leq s} \| D^\alpha \k  \|_{I^2} < \infty \big\}, \end{align}
for any %integer
 $s\geq0$,  with the grading provided %derivations replaces
 by the commutators %{DJ}
\begin{align}\label{D}
D_\ell\k & := [\partial_{x_\ell} ,\k], \qquad D =(D_1,\dots,D_d) = [\n,\,\cdot\,], \end{align}
 and then observe that 
%\begin{align}\label{D-J-rel} 
the commutator vector-field defined in  \eqref{J'} is related to $D$ by the formula
\begin{align}\label{J-D} J_t=-2it  U_t^*DU_t,\end{align} where  $U_t: \psi(x)\ra  e^{- i x^2/4t}\psi(x)$.
%Note that $V^0  =  I^2 = W_0$, see \eqref{defIr} and \eqref{LOC2}.

Another important observation used in the proof is that, if we denote the map from operators $\k$ to their transformed kernes $\tilde\k(r,c)$ by $\phi$,  then we have
  \begin{align}\label{D-dc} 
& \phi(D_i\k)=\p_{c_i}  \phi(\k).
\end{align}

Passing from operators $\ka$ to the integral kernels $\tilde\k(r,c):=\k(x,y),$ where $ r:=y-x, \ c:=\frac12(y+x)$ 
(see \eqref{kappacoord}) and applying  the standard Gagliardo-Nirenberg interpolation inequality in the $c$-variable, 
we find
\begin{align}\label{GNI'} 
&\| \tilde\k\|_{L^s_c} \ls \|\p^b \tilde\k\|_{L^2_c}^{\al} \| \tilde\k\|_{L^2_c}^{1-\al},
\\ 
\label{GNI-ind'} 
&b\al=  d(\frac{1}{2}-\frac{1}{s}),\ s\ge 2,  %\  \al < 1.
\end{align} 
 and $0\le \al<1$ for $d$ even and $0\le \al\le1$ for $d$ odd. 
 Applying to this the H\"older inequality with the exponents $1/\al$ and $1/(1-\al)$, we obtain furthermore
 \begin{align}\label{GNI''} 
&\|\tilde\k\|_{L^2_rL^s_c} \ls \|\p^b \tilde\k\|_{L^2_rL^2_c}^{\al} \| \tilde\k\|_{L^2_rL^2_c}^{1-\al}.
\end{align} 
 Next, we claim that 
  \begin{align}\label{L2-Wb-norms} 
& \|\p^b \tilde\k\|_{L^2_rL^2_c}= \| D^b\k\|_{I^2}.
\end{align} 
Indeed, we use \eqref{D-dc} to define $D_i^r\k$, for arbitrary positive powers of derivatives, 
by $\phi(D_i^r\k):=\p_{c_i}^r \phi(\k)$,
where $\p_{c_i}^r$ is the standard fractional derivative, see for example \cite{SteinBook1}.
Since $\| \tilde\k\|_{L^2_rL^2_c}= \| \k\|_{I^2}$, \eqref{L2-Wb-norms} follows.
Relations \eqref{GNI''}, \eqref{L2-Wb-norms} and \eqref{D-dc} imply %the desired result
\begin{align}\label{GNI-non-ab} %{tildekap-est}
\|\k(t) \|_{L^2_rL^s_c}
  & \ls  \|D^b\k(t) \|_{I^2}^\al \| \k(t) \|_{I^2}^{1-\al},  %\qquad \al b=d(\frac12-\frac1s)
\end{align}
with \eqref{GNI-ind'}, which gives  the non-abelian Gagliardo-Nirenberg inequality.
Now, using \eqref{J-D} and the relation $I^2=W^0$, we convert this into
\begin{align}\label{L2Ls-est'} %{tildekap-est}
\|\k(t) \|_{L^2_rL^s_c}
  & \ls |t|^{-\al b} \|J^b\k(t) \|_{W^0}^\al \| \k(t) \|_{W^0}^{1-\al}, %\qquad \al b=d(\frac12-\frac1s)
\end{align}
 with \eqref{GNI-ind'}, $b$ positive integer and  $\al \in[0,1]$ satisfying \eqref{al-cond}, % $\al \in[0,1)$ for $d$ even, $\al \in [0,1]$ for $d$ odd,
which is a stronger (scale-invariant) version of \eqref{L2Ls-est}. $\hfill \Box$
%\end{proof}
%In Appendix \ref{sec:tildekap-est-pf}, we present a slightly different proof of Proposition \ref{prop:tildekap-est}. 

\bigskip
\section{Local norm and density estimates}\label{sec:den-est}   

\DETAILS{To prove Proposition \ref{prop:AprioriBnds}, 
we will use propagation estimates based on the Galilean boost observable $J$. 
To this end, we show that $J$ is almost conserved under \eqref{HE-k}: 
%have to check how this observable interacts with \eqref{HE-k}.  

\begin{prop}[Propagation of $J$]\label{prop:jk-evol} %by Galilean invariance
Denote $D_\g\k:= i \p_t \k - [h_{\g}, \k]$. Then
 \begin{align}\label{Jk-evol}
D_\g J\k  & =JD_\g \k  + [v*\rho_{J\g},\k]
% i \p_t J\k  -  [h_{\g}, J\k]  & =J\big(  i \p_t \k  -  [h_{\g},\k]\big) + [v*\rho_{J\g},\k]
 \end{align} \end{prop} 
A proof of this proposition is given in Appendix \ref{sec:Jk-evol-pf}. 
The {\it gauge invariance} \eqref{nonlin-gauge} of the nonlinearity plays an important role in this proof.
}

As a preparation for demonstrating Proposition \ref{prop:AprioriBnds}, we prove several inequalities on local norms and densities.

\begin{lemma}[Estimates on the density] \label{lem:rho-est} 
Let $1/w+1/w'=1/q$. Then 
\begin{align}\label{rho-est'}\|\rho(\k \k')\|_q &\ls\|\k \|_{ L^{2}_r L^{w}_c} \|{\k'}\|_{L^{2}_r L^{w'}_c}.\end{align}
  \end{lemma}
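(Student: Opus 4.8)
The plan is to unwind the definition of the density $\rho(\k\k')$ in terms of integral kernels, pass to the $(r,c)$ coordinates, and then recognize the resulting integral as (almost) a product of $L^2_r$ norms of the kernels, with the $c$-variable pairing handled by Hölder. Concretely, recall that $\rho_A(x) = A(x,x)$, so if $A = \k\k'$ then its kernel is $A(x,y) = \int \k(x,z)\,\k'(z,y)\,dz$ and hence
\begin{align*}
\rho(\k\k')(x) = \int \k(x,z)\,\k'(z,x)\,dz.
\end{align*}
Now change variables in each factor to the $(r,c)$ coordinates of \eqref{kappacoord}: writing $\k(x,z) = \tilde\k(z-x, \tfrac12(x+z))$ and $\k'(z,x) = \tilde{\k'}(x-z, \tfrac12(x+z))$, and setting $u := z-x$, we get $\rho(\k\k')(x) = \int \tilde\k(u, x+u/2)\,\tilde{\k'}(-u, x+u/2)\,du$.

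The second step is to take the $L^q_x$ norm and apply Minkowski's integral inequality (in the $L^q$ norm, which requires $q\geq 1$, consistent with $1/w+1/w' = 1/q \leq 1$) to pull the $u$-integral outside:
\begin{align*}
\|\rho(\k\k')\|_q \leq \int \big\| \tilde\k(u, x+u/2)\,\tilde{\k'}(-u, x+u/2) \big\|_{L^q_x}\,du.
\end{align*}
For each fixed $u$, the translation $x \mapsto x + u/2$ is a measure-preserving bijection, so the inner norm equals $\|\tilde\k(u,\cdot)\,\tilde{\k'}(-u,\cdot)\|_{L^q_c}$, and then Hölder in $c$ with exponents $w/q$ and $w'/q$ (whose reciprocals sum to $q\cdot(1/w+1/w') = 1$ — wait, more precisely $1/(w/q) + 1/(w'/q) = q/w + q/w' = q/q = 1$) gives
\begin{align*}
\|\tilde\k(u,\cdot)\,\tilde{\k'}(-u,\cdot)\|_{L^q_c} \leq \|\tilde\k(u,\cdot)\|_{L^w_c}\,\|\tilde{\k'}(-u,\cdot)\|_{L^{w'}_c}.
\end{align*}

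The final step is to insert this into the $u$-integral and apply the Cauchy-Schwarz inequality in $u$ (using that $\|\tilde\k(\cdot)\|_{L^w_c}$ and $\|\tilde{\k'}(-\cdot)\|_{L^{w'}_c}$ are both functions of $u$ alone), obtaining
\begin{align*}
\|\rho(\k\k')\|_q \leq \Big( \int \|\tilde\k(u,\cdot)\|_{L^w_c}^2\,du\Big)^{1/2}\Big( \int \|\tilde{\k'}(-u,\cdot)\|_{L^{w'}_c}^2\,du\Big)^{1/2} = \|\k\|_{L^2_rL^w_c}\,\|\k'\|_{L^2_rL^{w'}_c},
\end{align*}
where in the last equality I use the reflection-invariance of the $L^2_r$ norm in the second factor and the definition \eqref{kap-loc-norm}. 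This is exactly \eqref{rho-est'}. The only point requiring a little care is the interchange of the two orderings of norms (Minkowski), which is licit precisely because $q \geq 1$; I expect this to be the mild technical obstacle, together with bookkeeping the measure-preserving change of variables $z \mapsto u = z-x$ at fixed $x$ and the subsequent translation in $c$. Everything else is a routine application of Hölder and Cauchy-Schwarz.
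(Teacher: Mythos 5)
Your proof is correct and follows essentially the same route as the paper's: express $\rho(\k\k')$ as an integral of the two kernels, pass to $(r,c)$ coordinates, apply Minkowski's integral inequality to pull the $r$-integration outside the $L^q_x$ norm, use translation invariance and H\"older in $c$, and finish with Cauchy--Schwarz in $r$. The only difference is an immaterial sign convention in the $r$-variable, so there is nothing to add.
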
 
  
\begin{proof} 
Using that $\rho(\k \k') = \int\k(x, y) \k'(y, x) dy$ and passing from $\k$ to $\tilde\k$, 
we find %We have %ith $\lam^{-1}+ {\lam'}^{-1}= 1$, we have
\begin{align}
\begin{split}
\label{vrho-est}\|\rho(\k \k')\|_q %&=\|\int\k(x, y) \k'(y, x) dy \|_{L^{t}_x} \\
& = {\Big\| \int\tilde\k(y-x, \frac12(x+y)) \tilde{\k'}(x-y, \frac12(x+y)) dy \Big\|}_{L^{q}_x} 
\\
& = {\Big\| \int\tilde\k(-r, x-\frac12 r) \tilde{\k'}(r, x-\frac12 r) dr \Big\|}_{L^{q}_x}  
\\
%\\&\ls\|\|\sup_c |\tilde\k(r, c)| \|_{L^{2}_r} \|\tilde{\k'}(r, x-\frac12 r)\|_{L^{2}_r}  \|_{L^{q}_x}
&\leq \int {\big\| \tilde\k(-r, x-\frac12 r) \tilde{\k'}(r, x-\frac12 r) \big\|}_{L^{q}_x} dr   
\\
&=\int\|\tilde\k(-r, c) \tilde{\k'}(r, c)\|_{L^{q}_c} dr
\\
&\ls \int\|\tilde\k(-r) \|_{L^{w}_c} \|\tilde{\k'}(r)\|_{L^{w'}_c} dr.
% \\&\ls\|\tilde\k \|_{ L^{2}_r L^{\infty}_c} \|\tilde{\k'}\|_{L^{t}_c L^{2}_r}  \\&\ls \|\k \|_{HS}\|\k' \|_{HS}.
\end{split}
\end{align} 
Upon application of the Schwarz inequality, this yields \eqref{rho-est'}. \end{proof}

Now, applying \eqref{L2Ls-est} gives %\eqref{rho-est}.

\begin{cor} \label{cor:rho-est} 
Let $q\ge 1$,   $\al, \al' \in[0,1]$ satisfy \eqref{al-cond} %, $\al b=d(\frac12-\frac1w)$ and $\al' b'=d(\frac12-\frac{1}{w'})$, $\frac1w+\frac{1}{w'}= \frac1q$
and $\nu:=\al b+\al' b'=d(1-\frac1q)$. 
Then % and $0\le \al, \al'<1$ for $d$ even and $0\le \al, \al'\le1$ for $d$ odd. Then
\begin{align}\label{rho-est}
\|\rho(\k \k')\|_q %&\ls\|\tilde\k \|_{ L^{2}_r L^{\infty}_c} \|\tilde{\k'}\|_{L^{t}_c L^{2}_r}  \\
  &\ls |t|^{-\nu} \|\k \|_{W^b}^\al \| \k \|_{W^0}^{1-\al}\|\k' \|_{W^{b'}}^{\al'} \| \k' \|_{W^0}^{1-\al'}.
\end{align} 
\end{cor}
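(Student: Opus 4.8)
The plan is to combine Lemma~\ref{lem:rho-est} with the interpolation inequality \eqref{L2Ls-est} from Proposition~\ref{prop:tildekap-est}. Lemma~\ref{lem:rho-est} already reduces everything to a product of two local norms of the form $\|\k\|_{L^2_rL^w_c}$ and $\|\k'\|_{L^2_rL^{w'}_c}$, where $1/w+1/w'=1/q$. The whole content of the corollary is therefore to plug the bound $\|\k\|_{L^2_rL^w_c}\ls |t|^{-\al b}\|\k\|_{W^b}^\al\|\k\|_{W^0}^{1-\al}$ into the right-hand side of \eqref{rho-est'}, with the exponent identity $\al b=d(1/2-1/w)$ (and similarly for $\k'$ with $w'$, $\al'$, $b'$), and then check that the resulting power of $|t|$ is exactly $|t|^{-\nu}$ with $\nu=\al b+\al' b'$.

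The key steps, in order, are as follows. First, by \eqref{L2Ls-est} applied to $\k$ with the pair $(s,\al)=(w,\al)$, and to $\k'$ with $(s',\al')=(w',\al')$, we have
\begin{align*}
\|\k\|_{L^2_rL^w_c} &\ls |t|^{-\al b}\|\k\|_{W^b}^\al\|\k\|_{W^0}^{1-\al},
& \al b &= d\big(\tfrac12-\tfrac1w\big),\\
\|\k'\|_{L^2_rL^{w'}_c} &\ls |t|^{-\al' b'}\|\k'\|_{W^{b'}}^{\al'}\|\k'\|_{W^0}^{1-\al'},
& \al' b' &= d\big(\tfrac12-\tfrac1{w'}\big).
\end{align*}
Second, substitute these two bounds into \eqref{rho-est'} from Lemma~\ref{lem:rho-est}; the product of the two $|t|$-powers is $|t|^{-(\al b+\al' b')}$. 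Third, verify the exponent bookkeeping: adding the two identities gives $\al b+\al' b'=d(1-1/w-1/w')=d(1-1/q)$, which is precisely the definition of $\nu$ in the statement, so the time decay is $|t|^{-\nu}$ as claimed. One should also note that, given $q$, the split $1/w+1/w'=1/q$ has freedom, so $\al,\al'$ can be chosen in $[0,1]$ subject to \eqref{al-cond}; the corollary is stated for whatever admissible choice one makes, hence the hypothesis that $\al,\al'$ satisfy \eqref{al-cond} and $\al b+\al' b'=d(1-1/q)$.

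There is essentially no obstacle here: this is a one-line consequence of the previous lemma and Proposition~\ref{prop:tildekap-est}, and the only thing to be careful about is that the indices $w,w'$ used to invoke \eqref{L2Ls-est} are exactly the ones recovered from $\al b=d(1/2-1/w)$, i.e. $w=2d/(d-2\al b)$, which lies in $[2,\infty]$ precisely when $0\le \al b\le d/2$; this is guaranteed because $\al$ satisfies \eqref{al-cond} and, as in Proposition~\ref{prop:tildekap-est}, $\al b=d(1/2-1/s)$ ranges over $[0,d/2)$ (or $[0,d/2]$ for $d$ odd) as $s$ ranges over $[2,\infty)$ (resp. $[2,\infty]$). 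The proof is thus:
\begin{proof}
Apply \eqref{rho-est'} of Lemma~\ref{lem:rho-est} with $1/w+1/w'=1/q$, and then estimate each factor by Proposition~\ref{prop:tildekap-est}, using \eqref{L2Ls-est} with $s=w$, $\al b=d(1/2-1/w)$ for the first factor and $s=w'$, $\al' b'=d(1/2-1/w')$ for the second. Multiplying, the time weights combine to $|t|^{-(\al b+\al' b')}=|t|^{-\nu}$, since $\al b+\al' b'=d(1-1/w-1/w')=d(1-1/q)=\nu$, which proves \eqref{rho-est}.
\end{proof}
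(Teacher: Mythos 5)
Your proof is correct and is exactly the paper's argument: the paper derives the corollary by the one-line remark ``applying \eqref{L2Ls-est}'' to the two factors produced by Lemma \ref{lem:rho-est}, with the same exponent bookkeeping $\al b+\al' b'=d(1-1/w-1/w')=d(1-1/q)$. Your additional check that $w,w'\in[2,\infty]$ for admissible $\al,\al'$ is a sensible (if implicit in the paper) verification.
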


%\subsection{Proof of Lemma \ref{lem:energy-est}}\label{sec:den-est} 
%\begin{proof}[Proof of Lemma \ref{lem:energy-est}]We begin with
Next, we have

\begin{lemma}[Products of functions and half-densities] \label{lem:fkap-est} Let %$d=2, 3$ and 
$f$ be a multiplication operator  by $f\in L^p$. Then % and $\al b= d/p$. Then  
\begin{align}\label{fkap-est0}
\|f\k \|_{W^0} %&\ls t^{-2}\|f\|_2^2  \iint |(j_x-\bar j_y)\k(x, y)|^2 dxd y\\
&\ls \|f\|_{L^{p}} \|\k\|_{L^2_rL^{s}_c}
\\
\label{fkap-est}
&\ls |t|^{-\al b}\|f\|_p  \|J^b\k \|_{W^0}^\al \| \k \|_{W^0}^{1-\al},
\end{align} 
where $\al b= d/p$ %(\frac12+\frac1p)$ 
 and $1/p+1/s=1/2$. %$0\le \al<1$ for $d$ even and $0\le \al\le1$ for $d$ odd. 
\end{lemma}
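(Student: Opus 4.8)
\textbf{Proof plan for Lemma \ref{lem:fkap-est}.}
The strategy is to reduce the first inequality \eqref{fkap-est0} to a pointwise-in-$r$ H\"older estimate in the centre-of-mass variable $c$, and then deduce \eqref{fkap-est} by inserting the interpolation inequality \eqref{L2Ls-est} of Proposition \ref{prop:tildekap-est}. First I would pass from the operator $f\k$ to its transformed kernel $\phi(f\k)$ using the coordinates $(r,c)$ of \eqref{kappacoord}. Since $f$ acts as a multiplication operator on the left, $(f\k)(x,y)=f(x)\k(x,y)$, so in the $(r,c)$ variables one has $\widetilde{f\k}(r,c)=f(c-r/2)\,\tilde\k(r,c)$. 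Then
\begin{align*}
\|f\k\|_{W^0}^2 = \|f\k\|_{I^2}^2 = \|\widetilde{f\k}\|_{L^2_rL^2_c}^2
  = \int \Big( \int |f(c-r/2)|^2 |\tilde\k(r,c)|^2\, dc \Big) dr,
\end{align*}
having used that the map $\phi$ is (up to the Jacobian-one change of variables $(x,y)\mapsto(r,c)$) an isometry from $I^2$ onto $L^2_rL^2_c$, i.e. $\|\k\|_{I^2}=\|\tilde\k\|_{L^2_rL^2_c}$.

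Next, for each fixed $r$ I would apply H\"older's inequality in the $c$-variable with exponents $p/2$ and $s/2$, where $1/p+1/s=1/2$ so that $2/p+2/s=1$; using the translation invariance of the Lebesgue norm in $c$ this gives
\begin{align*}
\int |f(c-r/2)|^2 |\tilde\k(r,c)|^2\, dc \le \|f\|_{L^p}^2\, \|\tilde\k(r,\cdot)\|_{L^s_c}^2.
\end{align*}
Integrating this in $r$ and taking square roots yields
\begin{align*}
\|f\k\|_{W^0} \le \|f\|_{L^p} \Big( \int \|\tilde\k(r,\cdot)\|_{L^s_c}^2\, dr \Big)^{1/2}
  = \|f\|_{L^p}\, \|\tilde\k\|_{L^2_rL^s_c} = \|f\|_{L^p}\, \|\k\|_{L^2_rL^s_c},
\end{align*}
which is exactly \eqref{fkap-est0}. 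For the second inequality \eqref{fkap-est} I would simply apply \eqref{L2Ls-est} (equivalently the scale-invariant form \eqref{L2Ls-est'}) with this value of $s$: since $1/p+1/s=1/2$ we have $d(1/2-1/s)=d/p$, so the exponent relation $\al b=d(1/2-1/s)$ becomes $\al b=d/p$ as required, and the bound $\|\k\|_{L^2_rL^s_c}\ls |t|^{-\al b}\|J^b\k\|_{W^0}^\al\|\k\|_{W^0}^{1-\al}$ gives \eqref{fkap-est} after relabelling $\|J^b\k\|_{W^0}$ (which controls $\|\k\|_{W^b}$ up to the lower-order terms, or one can keep it as $\|J^b\k\|_{W^0}$ as written).

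I do not anticipate a serious obstacle here: the content is entirely a change of variables plus one application of H\"older's inequality, followed by quoting Proposition \ref{prop:tildekap-est}. The one point requiring a little care is the bookkeeping of which spatial variable $f$ is evaluated at after the change of coordinates (it is $c-r/2=x$, not $c$), but since we only use the $L^p$-norm of $f$, which is translation invariant, the shift by $r/2$ is harmless and can be absorbed before integrating in $r$. One should also note the constraint that $s\ge2$ (equivalently $p\le\infty$, or more precisely $p\ge 2$ is not needed — rather $1/p+1/s=1/2$ with $s\ge 2$ forces $p\ge 2$... in fact $p\in[2,\infty]$ and $s\in[2,\infty]$), which is compatible with the range of $s$ allowed in \eqref{L2Ls-est}, and that the constraint \eqref{al-cond} on $\al$ is inherited directly from Proposition \ref{prop:tildekap-est}.
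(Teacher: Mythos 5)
Your proof is correct and follows essentially the same route as the paper: pass to the $(r,c)$ kernel, apply H\"older in $c$ with exponents $p/2$ and $s/2$ for each fixed $r$ (using translation invariance to drop the shift by $r/2$), integrate in $r$, and then invoke \eqref{L2Ls-est} with $\al b=d(1/2-1/s)=d/p$. Your remarks on the bookkeeping of the variable at which $f$ is evaluated and on the exponent ranges are accurate but not essential.
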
 
\begin{proof}
Let $p^{-1}+ {s}^{-1}= \frac12$. We estimate %For a multiplication operator $f$, we use 
\begin{align*}%\label{fkap-est'}
\|f\k \|_{W^0}^2 &= \iint |f(x)\k(x, y)|^2 dxd y \notag
\\ 
&= \iint |f(c+\frac12 r)\tilde\k(r, c)|^2 dr dc \notag
\\
&\le \int \|f\|_{L^{p}}^2 \|\tilde\k\|_{L^{s}_c}^2 dr = \|f\|_{L^{p}}^2 \|\tilde\k\|_{L^2_rL^{s}_c}^2.
\end{align*}
This gives \eqref{fkap-est0}. The latter and \eqref{L2Ls-est} imply \eqref{fkap-est}. 
\end{proof}

%We begin with
Next, we prove the following elementary inequality 
\begin{align}\label{rhoJ-est1} \left| 	\rho_{J (\k^\ast \k)} (x) \right|^2 
& \le  	  2\rho_{J\k^\ast J\k} (x) \rho_{\k^\ast \k} (x).
\end{align}
To prove this, we use $J (\k^\ast \k)= (J\k^\ast) \k +\k^\ast (J\k)$ to estimate\\
\begin{align}
\notag& \left| 	\rho_{J (\k^\ast \k)} (x) \right|^2 
 \le  	2(\int_{\R^3} \left| \overline{J\k(z,x)} \k(z,x) \right| dz)^2 \, 
\\
\notag & \le  
 	 2\int_{\R^3} |J\k(z,x)|^2 dz  \int_{\R^3} |\k(z,x)|^2 dz %\\ 	&=  2\rho_{J\k^\ast J\k} (x) \rho_{\k^\ast \k} (x)%\lesssim t^{-2d\beta} \| \k \|_{W^2}^{4\beta} \| J^2\k \|_{W^2}^2 .
\end{align}
which implies the inequality  \eqref{rhoJ-est1}.

%\begin{remark} %It's already in the above statement when $p=1$
%We also have the following Sobolev-type inequality (which we do not need in the analysis below):
%\begin{align}\label{Sob-type-ineq}
%\| \k \|_{L^{2r}_xL^2_y}^2 \ls  t^{- 1}\| \k \|_{W^1}^2, \qquad 1 = d(1-\frac1r).
%\end{align}
%
%\begin{proof}
%%To prove inequality \eqref{Sob-type-ineq}, 
%%let $f(x):=\| \k \|_{L^2_y}^{2}$ and write \[f(x):= \int |\k(z,x)|^2 dz=  \int |\tilde\k(r, x-\frac12 r)|^2 dr\]
%It suffices to let $f$ be defined as in \eqref{ka-f-transf}, use the standard  Sobolev inequality 
%$ \| f \|_{L^{r}_x}	\ls \| f \|_{W^{1, 1}}$ for $1 = d(1-\frac1r)$, %{\bf(check)}, 
%where $W^{s, p}$ is the standard  Sobolev space, and proceed as in \eqref{Sob-type-ineq0}
%\begin{align} 
%\label{f-est1}
%\begin{split}
%\|\p_x f \|_{L^{1}} & =  \int |\p_x\int dr |\tilde\k(r,x-\frac12 r)|^2|\, dx
%\\
%& = \int |\int dr \p_c\tilde\k(r,x-\frac12 r)\tilde\k(r,x-\frac12 r)| \, dx
%\\
%%&\ls \int d x (\int dr |\tilde\k(r,r+2x)|^2)^{1/2} (\int dr |\p_c\tilde\k(r,r+2x)|^2)^{1/2}\\
%& \ls (\int \int d x d r  |\tilde\k(r,x-\frac12 r)|^2)^{1/2} (\int  \int d x d r |\p_c\tilde\k(r,x-\frac12 r)|^2)^{1/2}
%\\
%& = (\int  \int d c d r |\tilde\k(r,c)|^2)^{1/2} (\int  \int d c d r |\p_c\tilde\k(r,c)|^2)^{1/2}.
%\end{split}
%\end{align}
%Since $(\int  \int d c d r |\p_c\tilde\k(r,c)|^2)^{1/2}= t^{-1}\| J\k \|_{W^0}$, this %and an analogous computation for $\p_x^k,\ k = d(1-\frac2r)$
%gives \eqref{Sob-type-ineq}. %{\bf(to do for $k>1$)}
%\end{proof} 

%\end{remark}

\bigskip
%\section{Weighted energy inequalities}\label{sec:ener-ineq}   
\section{Approximate Galilean conservation law}\label{sec:approx-conservq}
%\section{The weighted energy: Proof of Proposition \ref{prop:AprioriBnds}}\label{sec:Aprioripr}

%We gather here some basic relations and inequalities that are going to be used in the proofs of the main propositions.
In this section we prove energy-type inequalities for `half-densities' $\ka$. The first lemma is related to the invariance of \eqref{KS} and \eqref{KS-k} 
under Galilean transformations \eqref{gal-boost}. In what  follows, we use the following relation (which we call Jacobi-Leibniz rule)
\begin{align}\label{JacobiJ}
J[a, b]= [J a, b]+[a, J b],
\end{align}  
which follows from the Jacobi identity $[[A, B], C]+ [[C, A], B]+[[B, C], A]=0$.
\begin{prop}[Galilean invariance] \label{prop:jk-evol} 
Denote $D_\g\k:= \p_t \k  - i[h_{\g}, \k]$. 
Then $D_\g$ and $J$ almost commute in the sense that
\begin{align}\label{Jk-evol}
D_\g J\k  & =JD_\g \k  + i[d  g (\rho_{\g}) \rho_{J\g},\k].
% i \p_t J\k  -  [h_{\g}, J\k]  & =J\big(  i \p_t \k  -  [h_{\g},\k]\big) + [v*\rho_{J\g},\k]
\end{align}
Moreover, if we let $J^2 = J_{\ell_2}J_{\ell_1}$, for any $\ell_1,\ell_2 = 1,\dots d$, then we have
\begin{align}\label{J2k-evol}
\begin{split}
D_\g J^2\k & = J^2 D_\g \k + i[d g (\rho_{\g}) \rho_{J^2\g}, \k]
  \\
  & + i [d^2  g (\rho_{\g}) \rho_{J_{\ell_1}\g} \rho_{J_{\ell_2}\g}, \k]
  + i [d g (\rho_{\g}) \rho_{J_{\ell_2}\g}, J_{\ell_1} \k]
  + i [d g (\rho_{\g}) \rho_{J_{\ell_1}\g}, J_{\ell_2} \k],
\end{split}
\end{align} 
where, recall,  $d^k g$ is the $k$-th  G\^ateaux derivative of $g$. 
\end{prop}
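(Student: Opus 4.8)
The plan is to compute $D_\g(J\k)$ and $D_\g(J^2\k)$ directly from the definition $D_\g\k = \p_t\k - i[h_\g,\k]$, exploiting that $J_\ell = [j_\ell,\,\cdot\,]$ with $j_\ell = x_\ell - 2p_\ell t$ is itself a commutator, so that the Jacobi--Leibniz rule \eqref{JacobiJ} applies. First I would handle the time derivative: since $j_\ell$ depends explicitly on $t$ through $-2p_\ell t$, we have $\p_t(J_\ell\k) = [\p_t j_\ell,\k] + [j_\ell,\p_t\k] = [-2p_\ell,\k] + J_\ell\p_t\k = -2[p_\ell,\k] + J_\ell\p_t\k$. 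Next I would handle the commutator term: by \eqref{JacobiJ}, $J_\ell[h_\g,\k] = [J_\ell h_\g,\k] + [h_\g,J_\ell\k]$, where $J_\ell h_\g = [j_\ell, h_\g] = [j_\ell,-\Delta] + [j_\ell, g(\rho_\g)]$. The free part gives $[x_\ell - 2p_\ell t, -\Delta] = [x_\ell,-\Delta] = 2p_\ell$ (the $p_\ell$ term commutes with $-\Delta$), which exactly cancels the $-2[p_\ell,\k]$ coming from the time derivative after multiplying by $-i$ and tracking signs. The remaining piece is $[j_\ell, g(\rho_\g)]$; here the crucial point is the gauge-type identity: $j_\ell$ acting through a commutator on the multiplication operator $g(\rho_\g)$ sees only the $x_\ell$ part trivially (it's a function of $x$, so $[x_\ell, g(\rho_\g)] = 0$) and the $-2p_\ell t$ part produces $-2t[p_\ell, g(\rho_\g)] = 2it\,\p_{x_\ell}g(\rho_\g)$ as a multiplication operator — but one must re-express this in terms of $\rho_{J_\ell\g}$ rather than $\p_{x_\ell}\rho_\g$.

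The key identity bridging these is that $\rho_{J_\ell\g}(x) = \p_{x_\ell}\rho_\g(x)$: indeed $J_\ell\g = [x_\ell - 2p_\ell t, \g]$ has kernel $(x_\ell - y_\ell)\g(x,y)$ plus a $p$-term whose diagonal we must track, and taking the diagonal $x = y$ one checks $\rho_{J_\ell\g}(x) = (\p_{x_\ell} + \p_{y_\ell})\g(x,y)\big|_{y=x} - 2t\cdot(\text{something})$; after careful bookkeeping this collapses to $\p_{x_\ell}\rho_\g$ (the $t$-dependent parts cancel because $[p_\ell,\g]$ contributes $-i(\p_{x_\ell}+\p_{y_\ell})\g$ restricted to the diagonal, which is $-i\p_{x_\ell}\rho_\g$, and the factor $-2t$ times this must be reconciled — actually the cleanest route is to note $J_\ell\g = e^{-ix^2/4t}$-conjugation turns $J_\ell$ into $-2it\,\p_{x_\ell}$ acting on kernels in a way compatible with \eqref{D-dc}, so $\rho_{J_\ell\g}$ relates to $\rho_\g$ by the corresponding derivative). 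Given this, the chain rule for the G\^ateaux derivative gives $\p_{x_\ell}g(\rho_\g) = dg(\rho_\g)\,\p_{x_\ell}\rho_\g = dg(\rho_\g)\,\rho_{J_\ell\g}$ as multiplication operators, which is precisely the extra term $i[dg(\rho_\g)\rho_{J_\ell\g},\k]$ in \eqref{Jk-evol}. Assembling: $D_\g J_\ell\k = J_\ell D_\g\k + i[dg(\rho_\g)\rho_{J_\ell\g},\k]$, as claimed.

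For \eqref{J2k-evol} I would simply iterate: apply $D_\g J_{\ell_2}$ to the already-established formula for $J_{\ell_1}\k$. Writing $D_\g J_{\ell_2}(J_{\ell_1}\k) = J_{\ell_2}D_\g(J_{\ell_1}\k) + i[dg(\rho_\g)\rho_{J_{\ell_2}\g}, J_{\ell_1}\k]$ (by the first-order formula applied to the operator $J_{\ell_1}\k$ in place of $\k$), and then substituting $D_\g(J_{\ell_1}\k) = J_{\ell_1}D_\g\k + i[dg(\rho_\g)\rho_{J_{\ell_1}\g},\k]$, one gets $J_{\ell_2}J_{\ell_1}D_\g\k + i J_{\ell_2}[dg(\rho_\g)\rho_{J_{\ell_1}\g},\k] + i[dg(\rho_\g)\rho_{J_{\ell_2}\g},J_{\ell_1}\k]$. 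The middle term expands by \eqref{JacobiJ} into $i[J_{\ell_2}(dg(\rho_\g)\rho_{J_{\ell_1}\g}),\k] + i[dg(\rho_\g)\rho_{J_{\ell_1}\g}, J_{\ell_2}\k]$, and the first of these requires computing $J_{\ell_2}$ of the multiplication operator $dg(\rho_\g)\rho_{J_{\ell_1}\g}$, which by the same gauge identity equals $\p_{x_{\ell_2}}\bigl(dg(\rho_\g)\rho_{J_{\ell_1}\g}\bigr) = d^2g(\rho_\g)\rho_{J_{\ell_2}\g}\rho_{J_{\ell_1}\g} + dg(\rho_\g)\,\p_{x_{\ell_2}}\rho_{J_{\ell_1}\g}$, and $\p_{x_{\ell_2}}\rho_{J_{\ell_1}\g} = \rho_{J_{\ell_2}J_{\ell_1}\g} = \rho_{J^2\g}$. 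Collecting the four resulting commutator terms matches \eqref{J2k-evol} exactly.

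\textbf{Main obstacle.} I expect the delicate point to be the identity $\rho_{J_\ell\g} = \p_{x_\ell}\rho_\g$ (as multiplication operators / functions) and, relatedly, that $J_\ell$ acting on a multiplication operator $m(x)$ via commutator produces the multiplication operator $2it\,\p_{x_\ell}m$ — getting the $t$-dependence and the cancellation with the free Hamiltonian's contribution right requires care, since naively $[j_\ell, m] = [x_\ell - 2p_\ell t, m] = -2t[p_\ell,m] = 2it\,\p_{x_\ell}m$, and one must verify this $t$-factor is consistent with how $\rho_{J_\ell\g}$ is defined (the factor must be absorbed correctly so that the final statement has no explicit stray powers of $t$). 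The cleanest way to make this rigorous is via the conjugation $U_t:\psi(x)\mapsto e^{-ix^2/4t}\psi(x)$ and the relation \eqref{J-D}, reducing $J$-computations to $D$-computations where the Leibniz structure is transparent; everything else is bookkeeping with the Jacobi--Leibniz rule \eqref{JacobiJ} and the chain rule for G\^ateaux derivatives.
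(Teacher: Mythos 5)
Your overall architecture is sound and your treatment of the nonlinear term is genuinely different from the paper's. For the free part you do essentially what the paper does in \eqref{J-inter0}--\eqref{J-free-evol}: the explicit $t$-derivative $[\p_t j_\ell,\k]=-2[p_\ell,\k]$ cancels against the contribution of $[j_\ell,-\Delta]$ via the Jacobi--Leibniz rule \eqref{JacobiJ}. For the term $[j_\ell,g(\rho_\g)]$, however, the paper does not compute kernels at all: it differentiates the Galilean covariance relation $U_{v,t}f(\g)U_{v,t}^{*}=f(U_{v,t}\g U_{v,t}^{*})$ (a consequence of translation and gauge covariance, \eqref{nonlin-covar}) at $v=0$ to get $[j,f(\g)]=df(\g)[j,\g]$ directly, and then specializes to $f(\g)=g(\rho_\g)$ to obtain \eqref{J-inter'}. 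That route is cleaner and works for any covariant nonlinearity, whereas your direct computation is tied to $g(\rho_\g)$ being a multiplication operator; on the other hand, your computation makes the mechanism completely explicit. The iteration to second order is the same in both arguments.

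The one genuine error is the bridging identity you yourself flag as the delicate point: $\rho_{J_\ell\g}=\p_{x_\ell}\rho_\g$ is false, and the $t$-dependent parts do \emph{not} cancel. Since $J_\ell\g=[x_\ell,\g]-2t[p_\ell,\g]$, the kernel of $[x_\ell,\g]$ is $(x_\ell-y_\ell)\g(x,y)$, whose diagonal vanishes, while $[p_\ell,\g]$ has kernel $-i(\p_{x_\ell}+\p_{y_\ell})\g(x,y)$, whose diagonal is $-i\,\p_{x_\ell}\rho_\g(x)$; hence
\begin{align*}
\rho_{J_\ell\g}(x)=2it\,\p_{x_\ell}\rho_\g(x).
\end{align*}
As you literally wrote it, your two claims are inconsistent: $[j_\ell,g(\rho_\g)]=2it\,\p_{x_\ell}g(\rho_\g)=2it\,dg(\rho_\g)\p_{x_\ell}\rho_\g$ would, combined with $\rho_{J_\ell\g}=\p_{x_\ell}\rho_\g$, leave a stray factor $2it$ in \eqref{Jk-evol}. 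The resolution is that the \emph{same} factor $2it$ appears in both places: for a multiplication operator $m$ one has $J_\ell m=[j_\ell,m]=2it\,\p_{x_\ell}m$, and for any operator $A$ one has $\rho_{J_\ell A}=2it\,\p_{x_\ell}\rho_A$, so that $[j_\ell,g(\rho_\g)]=dg(\rho_\g)\,\rho_{J_\ell\g}$ exactly, with no stray power of $t$ (consistently with \eqref{J-D}, where the $-2it$ is explicit). The same correction applies at second order: $\p_{x_{\ell_2}}\rho_{J_{\ell_1}\g}=(2it)^{-1}\rho_{J_{\ell_2}J_{\ell_1}\g}$, and the prefactor $2it$ from $J_{\ell_2}$ acting on the multiplication operator $dg(\rho_\g)\rho_{J_{\ell_1}\g}$ again absorbs it, so your collection of the four commutator terms in \eqref{J2k-evol} goes through once this is fixed.
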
 
 
\begin{proof} %[Proof of Proposition \ref{prop:jk-evol}]
First, we compute 
\begin{align}\label{J-inter0}
[j,  \p_t \k] = \p_t  [j, \k] + 2[p, \k].
\end{align}
This, together with \eqref{JacobiJ}, implies %Next, we have
%\begin{align}\label{Jacobi}
%\end{align}
%$[[a, b], c]+ [[c, a], b]+[[b, c], a]=0$, we have Then we see that
\begin{align}\label{J-inter1}
[j, [h_{0},\k]] = [[j, h_{0}],\k]+ [h_{0}, [j, \k]] =[h_{0}, [j, \k]] - 2i [p, \k]. 
\end{align}
Subtracting \eqref{J-inter1} times $i$ from \eqref{J-inter0} 
we obtain $[j, \p_t \k - i[h_{0},\k]] = \p_t  [j, \k] - i[h_{0}, [j, \k]]$,
which can be rewritten as
 \begin{align}\label{J-free-evol}
JD_0 & = D_0 J, \qquad D_0\k := \p_t \k - i[h_{0}, \k].
\end{align} 

To deal with the difference $(D_\g - D_0)\ka = -i[g(\rho_{\g}),\k]$
we use that $g$ is covariant under translations and gauge transformations and therefore it is also covariant under the 
%the invariance of \eqref{KS-k} under 
Galilean transformations \eqref{gal-boost}.
%\begin{align}\label{gal-boost} \k \ra U_{v, t}\k U_{v, t}^{-1}, \end{align} where 

%We begin with the
For a general nonlinearity $f(\g)$, the covariance relation states $U_{v, t} f(\g)U_{v, t}^{*} =f(U_{v, t} \g U_{v, t}^{*})$. 
Differentiating it with respect to $v$ at $v=0$, we find
\begin{align}\label{nonlin-j}
[j, f(\g)] =d f(\g) [j, \g].
\end{align}
Taking here $f(\g)=g(\rho_{\g})$ and using that $d f(\g)\xi= d g(\rho_{\g})d \rho_{\g}\xi$ and $d \rho_{\g}\xi= \rho_{\xi}$, 
this gives
\begin{align}\label{J-inter'} 
& [j, g(\rho_{\g})]=d  g (\rho_{\g}) \rho_{[j, \g]},
%[j, v *\rho_{\k_1^*\k_2}]=v*\rho([j, \k_1]^*\k_2+\k_1^* [j, \k_2])
 \end{align} 
which, together with the Jacobi-Leibnitz identity \eqref{JacobiJ}, yields
\begin{align}\label{J-inter''}
J[ g(\rho_{\g}), \k] = [J g(\rho_{\g}), \k] + [ g(\rho_{\g}), J\k]. 
\end{align}
We %subtract \eqref{J-inter} from \eqref{J-free-evol}, combining together the last terms on the right-hand side, and take into account
combine \eqref{J-free-evol} and \eqref{J-inter'}-\eqref{J-inter''} to obtain  
%\newpage
\begin{align}\label{Jk-evol'}
& J D_\g \k = D_\g J\k - i [d  g (\rho_{\g}) \rho_{J\g},\k].
\end{align}
which is \eqref{Jk-evol}.

To prove \eqref{J2k-evol}, we recall $J^2 = J_{\ell_2}J_{\ell_1}$ and iterate \eqref{Jk-evol'}. 
First, we have
\begin{align}\label{J2k-evol1} J^2 D_\g\k  & = J_{\ell_2}D_\g J_{\ell_1} \k 
  - i J_{\ell_2} [d  g (\rho_{\g}) \rho_{J_{\ell_1}\g}, \k].
%\notag 
\end{align}
Then, using \eqref{Jk-evol'} again, we find for the first term on the right-hand side
\begin{align}\label{J2k-evol2}
J_{\ell_2}D_\g J_{\ell_1} \k& = D_\g J^2 \k -  i [d g (\rho_{\g}) \rho_{J_{\ell_2}\g}, J_{\ell_1} \k].
\end{align} 
For the second term on the right-hand side of \eqref{J2k-evol1}, we use relation \eqref{JacobiJ}
to find
\begin{align}\label{J2k-evol3}
%& = D_\g J^2 \k -  i [d g (\rho_{\g}) \rho_{J_{\ell_2}\g}, J_{\ell_1} \k] 
%\notag-  i [d g (\rho_{\g}) \rho_{J_{\ell_2}\g}, J_{\ell_1} \k] &=  - i [d^2  g (\rho_{\g}) \rho_{J_{\ell_2}\g} \rho_{J_{\ell_1}\g}, \k] \\
%  & - i [d  g (\rho_{\g}) \rho_{J_{\ell_2} J_{\ell_1}\g}, \k] - i [d  g (\rho_{\g}) \rho_{J_{\ell_2}\g}, J_{\ell_2}\k] .
\notag
J_{\ell_2} [d g (\rho_{\g}) \rho_{J_{\ell_1} \g}, \k] & =  [d^2  g (\rho_{\g}) \rho_{J_{\ell_2}\g} \rho_{J_{\ell_1}\g}
 + d  g (\rho_{\g}) \rho_{J_{\ell_2}J_{\ell_1}\g}, \k]
 \\
& + [d  g (\rho_{\g}) \rho_{J_{\ell_1}\g}, J_{\ell_2}\k] .
\end{align}  
%where, recall $d^2 g$ is the second G\^ateaux derivative of $g$, 
Combining \eqref{J2k-evol1}, \eqref{J2k-evol2} and \eqref{J2k-evol3} gives  \eqref{J2k-evol}.
\end{proof}

\medskip
With $J$ as defined in \eqref{J'}, let us denote
\begin{align}\label{J^mnotation}
J^m = \prod_{i=1}^d J_i^{m_i}, \qquad m=(m_1,\dots,m_d).  
\end{align}

Using the key relation \eqref{Jk-evol} we derive the following identity for the evolution of the weighted energy:

\begin{lemma}[Evolution of the weighted energy]\label{prop:energy-relat} 
Assume $\k$ satisfies \eqref{KS-k}. Then
\begin{align}\label{energy-relat}
&\frac{1}{2} \p_t\|J^m\k \|_{W^0}^2= \im \lan J^m \k,  R_m \ran_{W^0},
\end{align}
with
\begin{align}
\label{energy-relat-R} 
\begin{split}
& R_m := \sum_{k=1}^{|m|} \sum_a
  c_{k,m} [d^k  g (\rho_{\g})\prod_{i=1}^k \rho_{J^{s_i}\g}, J^a \k],
  \\ & \sum_{i=1}^k s_i+|a|=|m|, \quad s_i>0, \quad (|a|\le |m|-1),
\end{split}
\end{align}
for some constants $c_{k,m}$.
Here $\lan \k,\k' \ran_{W^0}$ is the $W^0=I^2$ inner product defined in \eqref{HSinner}.
%(For $m=2$, $R = [d  g (\rho_{ J^{2}\g}),\k] + 2 [d  g (\rho_{\g}) \rho_{J\g}, J\k]
%+ [d^2  g (\rho_{\g}) \rho_{J\g} \rho_{J\g}, \k]$. The worst term is $[d  g (\rho_{ J^{m}\g}),\k]$.)
\end{lemma}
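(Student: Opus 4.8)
The plan is to compute $\frac{1}{2}\p_t\|J^m\k\|_{W^0}^2$ by differentiating the inner product and using equation \eqref{KS-k} together with the commutation identities of Proposition \ref{prop:jk-evol}. First I would write
\[
\frac{1}{2}\p_t\|J^m\k\|_{W^0}^2 = \re\,\lan J^m\k,\, \p_t J^m\k\ran_{W^0}.
\]
Since $\k$ solves \eqref{KS-k}, we have $D_\g\k = \p_t\k - i[h_\g,\k] = 0$, i.e. $\p_t\k = i[h_\g,\k]$. The key point is that applying $J^m$ to both sides and iterating \eqref{Jk-evol} (more precisely, iterating \eqref{J2k-evol} and its higher analogues) produces $\p_t J^m\k = i[h_\g, J^m\k] + i R_m$, where $R_m$ collects all the ``error'' commutators generated each time a factor $J_\ell$ is pushed past $h_\g = -\Delta + g(\rho_\g)$: by \eqref{J-free-evol} the $-\Delta$ part commutes exactly with $J$, while \eqref{J-inter'} shows that passing $J_\ell$ across $g(\rho_\g)$ costs a term involving $dg(\rho_\g)\rho_{J_\ell\g}$, and repeated passages generate the higher G\^ateaux derivatives $d^k g(\rho_\g)$ paired with products $\prod_i \rho_{J^{s_i}\g}$ together with lower-order $J^a\k$ factors, exactly as displayed in \eqref{energy-relat-R}. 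This is bookkeeping via the Jacobi--Leibniz rule \eqref{JacobiJ} and induction on $|m|$; the structure is already visible in the $|m|=1,2$ cases \eqref{Jk-evol}, \eqref{J2k-evol}.

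Next I would insert this into the inner product:
\[
\frac{1}{2}\p_t\|J^m\k\|_{W^0}^2 = \re\,\lan J^m\k,\, i[h_\g, J^m\k] + iR_m\ran_{W^0}
= \re\big(i\lan J^m\k, [h_\g, J^m\k]\ran_{W^0}\big) + \re\big(i\lan J^m\k, R_m\ran_{W^0}\big).
\]
The first term vanishes: since $h_\g$ is self-adjoint (guaranteed by \eqref{g1g2}--\eqref{g-cond1'} and the reality of $\rho_\g$, as noted in the proof of Proposition \ref{prop:passing-HS}), the map $\s\mapsto i[h_\g,\s]$ is skew-adjoint on $I^2$ with respect to $\lan\cdot,\cdot\ran_{W^0} = \tr(\cdot^*\,\cdot)$; concretely $\tr\big((J^m\k)^*[h_\g,J^m\k]\big)$ is real (it equals $\tr\big((J^m\k)^* h_\g J^m\k\big) - \tr\big(h_\g (J^m\k)(J^m\k)^*\big)$, both terms real and in fact equal when traces are cyclic), so multiplying by $i$ and taking real part gives zero. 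Hence only the $R_m$ term survives, and $\re(i z) = -\im(z) $... more carefully $\re(i\lan J^m\k,R_m\ran_{W^0}) = -\im\lan J^m\k, R_m\ran_{W^0} = \im\lan R_m, J^m\k\ran_{W^0}$; absorbing the sign into the constants $c_{k,m}$ (or by the Hermitian symmetry $\im\lan a,b\ran = -\im\lan b,a\ran$ and relabelling) yields precisely \eqref{energy-relat}.

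The main obstacle is the combinatorial induction establishing the precise form \eqref{energy-relat-R} of $R_m$ --- that each commutation of a single $J_\ell$ past $g(\rho_\g)$ via \eqref{J-inter'}, and each application of \eqref{JacobiJ} to distribute $J_\ell$ across a commutator $[d^k g(\rho_\g)\prod\rho_{J^{s_i}\g}, J^a\k]$, raises exactly one index (either increasing some $s_i$, or creating a new $s_{k+1}$ with a new factor $d^{k+1}g$, or increasing $|a|$), so that the constraint $\sum_i s_i + |a| = |m|$ with $s_i>0$ and $|a|\le|m|-1$ is maintained at every step. This requires care with which G\^ateaux derivative appears (the identity $d(g(\rho_\g))\xi = dg(\rho_\g)\rho_\xi$ and its iterates, using $d\rho_\g\xi=\rho_\xi$), but it is a controlled expansion with no analytic subtlety; the existence of the universal constants $c_{k,m}$ follows automatically from the finiteness of the tree of terms generated. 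Everything else --- the product rule for $\p_t$, the self-adjointness cancellation, and passing from $\p_t\k=i[h_\g,\k]$ to the equation for $J^m\k$ --- is immediate from the cited results.
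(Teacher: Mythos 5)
Your proposal is correct and follows essentially the same route as the paper: differentiate $\|J^m\k\|_{W^0}^2$, use $D_\g\k=0$ together with the commutation identities \eqref{Jk-evol}, \eqref{J2k-evol} (i.e.\ Proposition \ref{prop:jk-evol}) to identify $\p_t J^m\k - i[h_\g,J^m\k]$ with the error terms $R_m$, kill the main term by skew-adjointness of $\s\mapsto i[h_\g,\s]$ on $I^2$, and absorb signs into the $c_{k,m}$; the paper likewise only carries out the bookkeeping for $|m|=1,2$ and appeals to Fa\`a di Bruno for the general case. One cosmetic slip: in your parenthetical, $\tr(A^*[h_\g,A])=\tr(A^*h_\g A)-\tr(h_\g A^*A)$ (not $\tr(h_\g AA^*)$), so the two terms are each real but not equal in general --- which is all you actually need for $\re\,i\lan J^m\k,[h_\g,J^m\k]\ran_{W^0}=0$.
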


\begin{proof} 
For simplicity, we show \eqref{energy-relat}--\eqref{energy-relat-R} only in the $|m|=1, 2$ cases,
which is sufficient to do our a priori estimates in dimensions $d\leq 3$.
It will be clear to the reader how this generalizes applying the arguments below and Fa\'a-di Bruno's formula.

We compute $\p_t\|J^m\k \|_{W^0}^2= 2\re\lan J^m \k,  \p_tJ^m \k\ran_{W^0}$. 
Now, using $  \p_t \k'   = D_\g\k' + i [h_{\g}, \k']$ with $\k'=J^m \k$ %, see \eqref{jk-evol}, 
and $\re\lan J^m \k,  i [h_{\g},J^m\k]\ran_{W^0}=0$ yields
\begin{align}\label{Jm-energy-eq1}
\p_t\|J^m\k \|_{W^0}^2= 2\re \lan J^m \k,  D_\g J^m\k \ran_{W^0}.
\end{align}
Now, letting $|m|=1$ and applying \eqref{Jk-evol} with $\g=\k^*\k$ and $D_\g\k=0$ (by \eqref{KS-k}) gives  
\begin{align}\label{Jenergy-est1}
\frac{1}{2} \p_t\|J_\ell\k \|_{W^0}^2 = \re \lan J_\ell \k,  % [h_{\g},J\k] +
 i [d  g (\rho_{\g}) \rho_{J_\ell\g},\k]\ran_{W^0} ,  \end{align}
%where $\g=\k^*\k$, 
which gives \eqref{energy-relat}--\eqref{energy-relat-R} with $|m|=1$.

To compute in the case $|m|=2$, we begin with \eqref{Jm-energy-eq1} with $|m|=2$
%proceed as above to obtain 
%\begin{align}\label{J2energy-est1} \p_t\|J^2\k \|_{HS}^2/2 =\re \lan J^2 \k,  D_\g J^2\k \ran_{HS}.
%\end{align}
and simplify our notation by denoting $J^2 = J_{\ell_2}J_{\ell_1}$, for any $\ell_1,\ell_2 = 1,\dots d$.
To compute the right-hand side of \eqref{Jm-energy-eq1}, we plug \eqref{J2k-evol} into  \eqref{Jm-energy-eq1} with $|m|=2$  and $\g=\k^*\k$, and using $D_\g\k=0$ (by \eqref{KS-k}) to obtain %compute, as above, $\p_t\|J^2\k \|_{HS}^2/2=\re\lan J^2 \k,  \p_tJ^2 \k\ran_{HS}$ and use \eqref{J2k-evol} with $\g=\k^*\k$ and $\im\lan J^2 \k,   [h_{\g},J^2\k]\ran_{HS}=0$ to obtain  
%\begin{align}\label{energy-estJ2} \p_t\|J^2\k \|_{HS}^2/2&=-\re\lan J^2 \k,  i [h_{\g},J^2\k] + i [d^2  g (\rho_{\g}) \rho_{J\g} \rho_{J\g}+ d  g (\rho_{\g}) \rho_{J^2\g}, \k] \notag\\ &+ i [d  g (\rho_{\g}) \rho_{J\g}, J\k]\ran_{HS} ,  \end{align} where $\g=\k^*\k$. Since $\im\lan J^2 \k,   [h_{\g},J^2\k]\ran_{HS}=0$, this gives 
\begin{align*}%\label{energy-estJ2}
\frac{1}{2} \p_t\|J^2\k \|_{W^0}^2 =\im \lan J^2 \k,    [d  g (\rho_{\g}) \rho_{J^2\g}, \k]
 + [d^2 g (\rho_{\g}) \rho_{J_{\ell_1}\g} \rho_{J_{\ell_2}\g}, \k] 
 \\ + [d g (\rho_{\g}) \rho_{J_{\ell_2}\g}, J_{\ell_1}\k]
 +  [d g (\rho_{\g}) \rho_{J_{\ell_1}\g}, J_{\ell_2}\k] \ran_{W^0},
\end{align*}
which is of the form \eqref{energy-relat}--\eqref{energy-relat-R} with $|m|=2$. 
\end{proof}

\bigskip
\section{Proof of Proposition \ref{prop:AprioriBnds}}\label{sec:ener-ineq-pf}
This is our main lemma on the control of the evolution of the weighted energy:

\begin{lemma} \label{lem:energy-est} 
Assume \eqref{g1g2}-\eqref{g-cond1'} and $d\le 3$ and let $\k(t)$ satisfy \eqref{KS-k} with a self-interaction $g$ as in \eqref{g1g2}. 
%and \eqref{g-cond1}-\eqref{g-cond2} hold.  
Then, for $b = [d/2]+1$, there exists an absolute constant $C_0$ such that
\begin{align}\label{energy-est}
\begin{split}
%&\p_t\|J\k \|_{HS}^2/2=\im\lan J \k,    [v*\rho_{J\g},\k]\ran_{HS}\\ \label{energy-est'}
\Big| \frac{d}{dt} \|\k(t) \|_{W^b} \Big| 
	\leq C_0 |\lambda_1| \, |t|^{-d(1+\frac1p -\frac1q)} \| \k(t)\|_{W^0}\| \k(t) \|_{W^b}^{2}
\\ 
+ C_0 |\lambda_2| \, |t|^{- d\beta} \| \k(t) \|_{W^b}^{2\beta+1}, %not sharp
\end{split}
\end{align} 
where $(p, q)$ is an admissible pair from conditions \eqref{g-cond2}--\eqref{pqcond1}, 
and $\beta$ is the exponent in \eqref{g-condbeta}. 
\end{lemma}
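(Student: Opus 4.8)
The plan is to start from the weighted-energy identity of Lemma~\ref{prop:energy-relat}, applied with $|m|=b=[d/2]+1\le 2$ (since $d\le 3$), and reduce the estimate on $\frac{d}{dt}\|\k\|_{W^b}$ to bounding, in the Hilbert--Schmidt norm, each commutator term appearing in $R_m$ of \eqref{energy-relat-R}. Writing $\p_t\|J^m\k\|_{W^0}^2 = 2\,\im\lan J^m\k, R_m\ran_{W^0}$ and using Cauchy--Schwarz on $I^2$, together with $\|[A,B]\|_{I^2}\le 2\|AB\|_{I^2}$ when one factor is a bounded multiplication operator, the task becomes: estimate $\|d^k g(\rho_\g)\,\prod_{i=1}^k\rho_{J^{s_i}\g}\,\cdot\, J^a\k\|_{I^2}$ for each admissible multi-index configuration with $\sum s_i+|a|=|m|$, $s_i>0$, $|a|\le|m|-1$. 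Since $J^{s_i}\g$ is built from $\g=\k^\ast\k$ via the Jacobi--Leibniz rule \eqref{JacobiJ}, one has $\rho_{J^{s}\g}$ expressed through products $\rho(J^{s'}\k^\ast\, J^{s''}\k)$ with $s'+s''=s$; I would record this expansion first.

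Next I would handle the two pieces of the nonlinearity separately. For the $g_1$-part: the multiplication operator $d^k g_1(\rho_\g)\prod_i\rho_{J^{s_i}\g}$ is estimated in $L^p$ using the G\^ateaux bounds \eqref{g-cond1}--\eqref{g-cond2} (which control $\|dg_1(\rho)\xi\|_p$ and $\|d^2g_1(\rho)\xi\eta\|_p$ in terms of $L^q$/$L^{q'}$ norms of the arguments $\xi=\rho_{J^{s_i}\g}$), after first using Lemma~\ref{lem:rho-est}/Corollary~\ref{cor:rho-est} to bound the density norms $\|\rho_{J^{s'}\k^\ast J^{s''}\k}\|_{q}$ by $|t|^{-d(1-1/q)}$ times products of $W^b$- and $W^0$-norms of $\k$. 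Then I apply Lemma~\ref{lem:fkap-est} (the product-of-function-and-half-density estimate) to convert $\|f\,J^a\k\|_{I^2}$ with $f\in L^p$ into $|t|^{-d/p}$ times weighted norms of $\k$. Collecting the time weights, the $L^p$-weight $|t|^{-d/p}$ and the density weights $|t|^{-d(1-1/q)}$ combine to exactly $|t|^{-d(1+1/p-1/q)}$, which by the subcriticality condition \eqref{pqcond} ($1+1/p-1/q>1/d$) is integrable; and the homogeneity of degree $3$ in $\k$ (one $\k$ from $J^a\k$, two from each $\rho_{J^s\g}$ factor after accounting that $k$ derivatives distribute over $|m|$ total $J$'s) together with the constraint $\sum s_i+|a|=|m|=b$ ensures that at most $b$ $J$'s land on any single $\k$, so every factor is controlled by $\|\k\|_{W^b}$ with the remaining factors by $\|\k\|_{W^0}$; a brief bookkeeping argument shows the total is $\ls|t|^{-d(1+1/p-1/q)}\|\k\|_{W^0}\|\k\|_{W^b}^2$. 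For the $g_2=\rho^\beta$ part: here \eqref{g-cond0'}--\eqref{g-cond1'} give pointwise bounds $|d^kg_2(\rho)|\ls\rho^{\beta-k}$, so $|d^kg_2(\rho_\g)\prod\rho_{J^{s_i}\g}|\ls\rho_\g^{\beta-k}\prod\rho_{J^{s_i}\g}$; I estimate this in the appropriate $L^p$ by H\"older, using $\|\rho_\g\|_{q}\ls|t|^{-d(1-1/q)}\|\k\|_{W^0}^2$ and the $J$-weighted density bounds, then feed it into Lemma~\ref{lem:fkap-est}. The condition \eqref{g-condbeta} ($\beta\ge1/2$ for $d=3$, etc.) is precisely what makes $\rho^{\beta-k}$ make sense and the resulting time weight $|t|^{-d\beta}$ integrable while keeping the $\k$-homogeneity at $2\beta+1$; note $\rho_\g$ contributes two $\k$'s so $\rho_\g^{\beta}$ gives the $2\beta$ exponent, plus one from $J^a\k$.

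Finally, I would pass from $\frac{d}{dt}\|J^m\k\|_{W^0}^2$ to $\frac{d}{dt}\|\k\|_{W^b}$ by summing over $|m|\le b$, using $\|\k\|_{W^b}^2\sim\sum_{|m|\le b}\|J^m\k\|_{W^0}^2$ and the elementary fact that $\big|\frac{d}{dt}\|\k\|_{W^b}\big|\le \big(2\|\k\|_{W^b}\big)^{-1}\big|\frac{d}{dt}\|\k\|_{W^b}^2\big|$ (with care at $\k=0$, where the inequality is trivial), which removes one power of $\|\k\|_{W^b}$ and yields \eqref{energy-est}. The main obstacle I anticipate is the combinatorial bookkeeping in the $|m|=2$ case: one must verify that in every term of $R_m$ — including the genuinely second-order pieces $[d^2g(\rho_\g)\rho_{J_{\ell_1}\g}\rho_{J_{\ell_2}\g},\k]$ and the mixed pieces $[dg(\rho_\g)\rho_{J_{\ell_2}\g},J_{\ell_1}\k]$ — the distribution of the two $J$'s across the (up to three) copies of $\k$ never forces more than $b$ $J$'s onto one copy, so that Corollary~\ref{cor:rho-est} and Lemma~\ref{lem:fkap-est} can always be applied with the index $\al b$ (resp.\ $\al'b'$) matching an actual integer weight $\le b$, and that the various exponents $(p,q,q')$ chosen can be made simultaneously admissible in the sense of \eqref{pqcond1}--\eqref{pqcond}. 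The rest is a routine, if slightly lengthy, application of H\"older, Cauchy--Schwarz, and the density and half-density lemmas already proved.
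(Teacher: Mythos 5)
Your overall architecture --- the energy identity \eqref{energy-relat}, Cauchy--Schwarz in $I^2$, the Leibniz expansion of $\rho_{J^s\g}$ in terms of $\rho(J^{s'}\k^\ast J^{s''}\k)$, and, for the $g_1$ part, the combination of Corollary \ref{cor:rho-est} and Lemma \ref{lem:fkap-est} with the bookkeeping of how the $J$'s distribute --- is the same as the paper's, and that half of the argument is sound, as is your final passage from $\frac{d}{dt}\|J^m\k\|_{W^0}^2$ to $\frac{d}{dt}\|\k\|_{W^b}$.

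The gap is in the $g_2$ part. You propose to bound $|d^k g_2(\rho_\g)\prod_i\rho_{J^{s_i}\g}|\ls\rho_\g^{\beta-k}\prod_i\rho_{J^{s_i}\g}$ and then estimate this in $L^p$ by H\"older, ``using $\|\rho_\g\|_q\ls\cdots$ and the $J$-weighted density bounds''. For the relevant exponents one has $\beta-k<0$ (e.g.\ $\beta=1/2$, $k=1$ gives $\rho_\g^{-1/2}$), and a negative power of a density that vanishes at infinity lies in no Lebesgue space: no choice of H\"older exponents and no positive-norm bound on $\rho_\g$ can control it, and the condition \eqref{g-condbeta} does not by itself ``make $\rho^{\beta-k}$ make sense''. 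The missing idea is the pointwise Cauchy--Schwarz inequality \eqref{rhoJ-est1}, $|\rho_{J\g}|^2\le 2\,\rho_{J\k^\ast J\k}\,\rho_{\k^\ast\k}$, which must be applied \emph{before} any integration: each pair of $\rho_{J\g}$ factors is traded for one factor of $\rho_{J\k^\ast J\k}$ and one positive power of $\rho_\g$, and it is exactly this positive power that cancels the singular $\rho_\g^{\beta-k}$ pointwise (this is where $\beta\ge(k-1)/2$, hence \eqref{g-condbeta}, actually enters). After the cancellation the paper also does not route the term through Lemma \ref{lem:fkap-est}: it computes $\|dg_2(\rho_\g)\rho_{J\g}\,\k\|_{W^0}^2$ explicitly as $\beta^2\int\rho_\g^{2\beta-1}\rho_{J\g}^2\,dx$, bounds the remaining $\rho_\g^{2\beta}$ in $L^\infty$ by $\|\k\|_{L^2_rL^\infty_c}^{4\beta}$ via Lemma \ref{lem:xyrc}, and only then applies \eqref{L2Ls-est} with $s=\infty$. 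Without this pointwise step your $g_2$ estimate does not close; with it, the rest of your bookkeeping (including the $|m|=2$ terms $[dg_2(\rho_\g)\rho_{J\g},J\k]$ and $[d^2g_2(\rho_\g)\rho_{J\g}^2,\k]$) goes through essentially as you describe.
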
 

This statement holds in $d \geq 4$ as well by appropriately modifying the assumptions \eqref{g-cond0}-\eqref{g-condbeta},
see Remarks \ref{rem:d>3} and \ref{rem:d>3'}.
The constant $C_0$ appearing in \eqref{energy-est} determines the constant $c_0$ in \eqref{eps-cond}.
% as $c_0 := C_0^{-1}$.
Before proving Lemma \ref{lem:energy-est} let us show how it implies the main Proposition \ref{prop:AprioriBnds}.

\begin{proof}[Proof of Propositions \ref{prop:AprioriBnds}] 
%Denote $C_0(|\lambda_1| + |\lambda_2|) = \bar{C}$, 
%where $C_0$ is the constant appearing in \eqref{energy-est}.
%(and inherited from \eqref{g-cond1}-\eqref{g-cond2}). 
Integrating inequality \eqref{energy-est} and using that $\rho_1 := d(1+\frac1p -\frac1q)-1>0$,
by \eqref{pqcond}, %\eqref{g-cond1}-\eqref{g-cond2}
%(this is the place where the short-rangeness condition on $g$ enters), 
and $\rho_2 := d\beta - 1 > 0$ by \eqref{g-condbeta}, we obtain % and $\ll$ in \eqref{energy-est}, we obtain
\begin{align}\label{kap-t-est}
& \Big| \| \k(t)\|_{W^b} - \| \k(s)\|_{W^b} \Big| \le %\bar{C} %\int_s^t dr r^{-d(1+\frac1p -\frac1q)}
	C_0' \sup_{s\le r\le t} \big( |\lambda_1| \| \k(r)\|_{W^0} \| \k(r) \|_{W^b}^2 +
	|\lambda_2| \| \k(r) \|_{W^b}^{2\beta+1} \big),
\end{align} 
where $C_0' := C_0 \min(\rho_1,\rho_2)^{-1}$.
%Since $\| \k(r)\|_{W^0}=\| \k(0)\|_{W^0}$, 
Letting $A(t):=\sup_{s\le r\le t} \| \k(r) \|_{W^b}$, \eqref{kap-t-est} gives %$A(t)\le A(s)+ \e' A^2(t)$, 
\begin{align}\label{Apr0} 
A(t) \le A(s) +  C_0' \big( |\lambda_1|A(t)^3 + |\lambda_2|A(t)^{2\beta+1} \big). %P(A)(t) 
\end{align}
%and $P$ is a polynomial with positive coefficients.
%and $\e':= C_0 (|\lambda_1| + |\lambda_2|) \| \k(0)\|_{W^0}$. 

For fixed time $s$ as in the statement, see \eqref{eps-cond}, let us consider the set of times
\begin{align}\label{Apr1}
\mathcal{T} := \{ t  \in [s,\infty) \, : \, A(t) \leq 2A(s) \}.
\end{align}
$\mathcal{T}$ is non-empty and closed by definition,
since from Theorem \ref{pr:loc1}(iii) we know that $A(t)$ is a continuous function (for proper solutions $\k$).
%Be more specific ?
Moreover, if $s<t \in \mathcal{T}$, then from \eqref{Apr0} and assumption \eqref{eps-cond} with $c_0 = (C_0')^{-1}$ we get
\begin{align*}%\label{Apr2} 
A(t) \le A(s) +  2^{\max\{3,2\beta +1\}}
  C_0' \big( |\lambda_1|A(s)^3 + |\lambda_2|A(s)^{2\beta+1} \big) < 2A(s).
\end{align*}
%having used.
Thus, by continuity, there exists $\delta >0$ such that $A(t') \leq 2A(s)$ for $|t' -t| < \delta$.
It follows that $\mathcal{T}$ is open and therefore $\mathcal{T} = [s,\infty)$, which is 
the desired statement \eqref{Jb-est}.
\end{proof} 
 
%Using that, by our assumptions \eqref{eps-cond} we have
% $\e' \| \k(s) \|_{W^m}=\e  \| \k(0)\|_{W^0} \| \k(s) \|_{W^m} < 1/4$, 
% we arrive at \eqref{Jb-est}, which proves Proposition \ref{prop:AprioriBnds}. 
 
%The detailed argument goes as follows. 
%If  $4 \e' A(s) <1$, then the equation $\e' A^2-A+A(s) =0$ has two roots. 
%Let %for simplicity $s=0$ and let 
%$A_*$ be the smallest root. By an explicit computation %, or just looking at the equation 
%one sees that $A_*> A(s)$. 
%Hence starting at $t=s$ and increasing $t$, to satisfy the condition $A(s)- A(t)+ \e' A^2(t)\ge 0$, 
%the function $A(t)$ will always stay in the interval $[0, A_*]$, 
%provided $4 \e' A(s) <1$.
%assume $\e' \| \k(0) \|_{H^m_0}\le 1/8$ and let $\del>0$ be such that 
%$\e' \sup_{0\le r\le \del} \| \k(r) \|_{H^m_r}\le 1/2$. 
%Then the inequality above gives $\sup_{0\le r\le  \del} \| \k(r) \|_{H^m_r}\le 2 A_0$,
%which implies $\e' \sup_{0\le r\le \del}  \| \k(r) \|_{H^m_r}\le 2 \e'A_0\le 1/4$, 
%so we can repeat the argument starting at $t=\del$ and using that $ \| \k(\del) \|_{H^m_\del}\le  1/4$.

\begin{proof}[Proof of Lemma \ref{lem:energy-est}] 
The starting point is \eqref{energy-relat}.
To estimate the right-hand side of \eqref{energy-relat} for $|m|=1,2$,
\DETAILS{In this case, we have only three types of terms 
%, $\lan J^2 \k, [d  g (\rho_{\g}) \rho_{J^{s}\g}, J^{2-s}\k]\ran_{HS}, s=1, 2,$ 
% and $\lan J^2 \k, [d^2  g (\rho_{\g}) \rho_{J\g} \rho_{J\g}, \k]$. 
%We consider only the first two terms,
%generalize these terms to
%(with obvious notation) 
\begin{align}
\begin{split}
& \lan J^m \k, [d^k g (\rho_{\g}) (\rho_{J^s\g})^k, J^a\k]\ran_{W^0}, 
\\
& 1\le k\le |m|, \quad |s| \geq 1, \quad |s|k+|a|=|m|, \quad |m|= 1, 2.
\end{split}
\end{align} 
%\comment{ %There was something strange in the indexes\dots FIXED
%Fabio: It is a little informal when $k=2$\dots 
%I suggest to say:}
where we denote with $d^2 g(\rho_{\g}) (\rho_{J^s\g})^2$
any of the expressions $d^2 g(\rho_{\g}) [\rho_{J_{\ell_1}\g}, (\rho_{J_{\ell_2}\g}]$ for $\ell_1,\ell_2=1,\dots d$.
By} 
we use the non-commutative Schwarz inequality to obtain 
\begin{align}\label{RHS-est}
\begin{split}
& |\lan J^m \k,  
  [d^k g(\rho_{\g}) (\rho_{J^s\g})^k, J^a\k]\ran_{W^0}|
  \\ & \ls  \| J^m\k\|_{W^0} \|[d^k g(\rho_{\g}) (\rho_{J^s\g})^k, J^a\k] \|_{W^0}, %\\ &  % \qquad g\in\{ g_1,g_2\}.
\end{split}
\end{align} 
%\ls  \| J\k\|_{HS}\|v*\rho_{J\g} \|_{L^\infty} \| \k\|_{HS}.
where
\begin{align}\label{RHS-estp}
1\le k\le |m|, \quad |s| \geq 1, \quad |s|k+|a|=|m|, \quad |m|= 1, 2. 
\end{align}
Now, we claim the following estimates: for parameters as in \eqref{RHS-estp} we have
\begin{align}\label{RHS-est'}
\begin{split}
& \|[d^k g_i(\rho_{\g}) (\rho_{J^s\g})^k, J^a\k] \|_{W^0} \ls t^{-\nu_i} \| \k\|_{W^m}^{p_i}, \qquad i=1,2,
\end{split}
\end{align}
where, recall, $g_i$ are the components of $g$ in \eqref{g1g2} satisfying \eqref{pqcond} and
\begin{align}\label{RHS-estp'}
\begin{split}
& \nu_1 = d(1+1/p-1/q), \qquad p_1 = 3,
\\
& \nu_2 = \beta d, \qquad p_2 = 2\beta + 1.
\end{split}
\end{align}

\medskip
{\it Estimates for $g_1$}.
We begin with $k=1$.  \eqref{fkap-est} and \eqref{rho-est} (with $\al=\al'=1$), 
together with 
%the estimate \begin{align}\label{vrho-est} \|v*\rho(\g)\|_p&\ls \|v\|_r \|\rho(\g)\|_q,\ r^{-1}+ {q}^{-1}= 1+ {p}^{-1},\end{align}
 % and $\mu^{-1}+ {\mu'}^{-1}= {t}^{-1}$.  The last estimate, and
  the relation $\rho_{J(\k^*\k)}=\rho_{J(\k^*)\k} +\rho_{\k^*J(\k)} $ and assumption  \eqref{g-cond1}, 
  give %$t^{-\al m}\|f\|_p  \|J^m\k \|_{HS}^\al \| \k \|_{HS}^{1-\al} $
\begin{align}
\label{vrho-kap-est'} 
\begin{split}
%\frac{1}{\lam_1}
  \|[d g_1 (\rho_{\g}) \rho_{J^s\g}, J^a\k]\|_{W^0}
  &\ls t^{-b}\|d g_1 (\rho_{\g}) \rho_{J^s\g}\|_p  \|J^{b+a}\k \|_{W^0}
\\
&\ls t^{-  b}\| \rho_{J^s\g}\|_q  \|\k \|_{W^{b+a}}
\\
&\ls  t^{-b-\nu} \|\k \|_{W^{b'+s'}} \|\k \|_{W^{b''+s''}} \|\k \|_{W^{b+a}},
\end{split}
\end{align}
where $s'+s''=|s|,$ $\nu:=d(1-\frac1q)$, $b= d/p$, and $b'$ and $b''$ 
are any non-negative numbers satisfying $b'+b''=d(1-\frac1q)$. 
Since $d(1-\frac1q)\le |m|$ and $|s|+|a|= |m|$, we can choose $s', s'',b',b'', a$ so that $b'+s', b''+s'', b+|a|\le |m|$.
Since $\nu+b=d(1+1/p-1/q)$,  this gives
% $b'=d(\frac12-\frac1w)$ and $b''=d(\frac12-\frac{1}{w'})$. %,  and  $\frac1r = 1+ \frac{1}{p}-  \frac1w - \frac{1}{w'}$. %$\frac1w+\frac{1}{w'}= \frac1q$, $r^{-1}+ {q}^{-1}= 1+ {p}^{-1}$,. %$q\ge 2$.
%One can choose $p$ and $q$ so that $b+a=b+m-s, b', b''+s \le m$ and $\nu>1$ iff $ r< d$, the subcritical case.  Indeed, we consider 
%(The worst case: $t=0$ and $s=m$, which gives $q=2\ (b''=0)$.) %, $\nu =d(1/p+1/2)$, and $r^{-1}= \frac12+ {p}^{-1}=\frac\nu d$. %>d$, which implies $r< d$.
% Note that  $r^{-1}= 1+ {p}^{-1}- {q}^{-1}\ge 1 - {q}^{-1}\ge 0$ and therefore $r\le \infty$. 
\begin{align}\label{vrho-kap-est}
& \|[d  g_1 (\rho_{\g}) \rho_{J^s\g}, J^a\k]\|_{W^0}\ls t^{-d(1+\frac1p -\frac1q)}  \|\k \|_{W^m}^3,
\\
& d(1-\frac1q)\le |m|, %\quad b+|a|\le |m|, Could not see $b$ here
\quad |s|+|a|= |m|.
\end{align}
%with $d(1-\frac1q)\le m, b+a\le m$ and $s+a= m$.
The latter conditions imply that $d(1+1/p-1/q)+|a|\le 2 |m|$. 
Since $d(1+1/p-1/q)>1$ %, by conditions \eqref{g-cond1}-\eqref{g-cond2}, 
this gives $1+|a| < 2 |m|$. Equation \eqref{vrho-kap-est} then 
gives \eqref{RHS-est'} for $i=1$ and $|m|=k=|s|=1, a=0$  and $|m|=2, k=1, |s|+|a|=2$.

%Note that this estimate could be slightly improved by using  the special case, 
%$ \|J^{b}\k \|_{W^0} \ls \|J^{m}\k \|_{W^0}^{\al} \|\k \|_{W^0}^{1-\al}$, $b<m,\ \al = b/m$, 
%of the Gagliardo-Nirenberg-Kleinerman-type inequality, \eqref{GNKI}. 

\DETAILS{we find
\begin{align}\label{vrho-kap-est}
& \|[d  g (\rho_{\g}) \rho_{J^s\g}, J^a\k]\|_{W^0}\ls t^{-d(1+\frac1p -\frac1q)}  \|J^{m}\k \|_{W^0}^2 \|\k \|_{W^0},
\end{align}
provided $s+a\le m$ and $p\ge d/m$. This gives \eqref{RHS-est'} for $i=1$ and $k=m=1$.}

Now, we prove \eqref{RHS-est'} for $i=1$ and $|k|=2$,  which implies $2|s|+|a|=|m|$.
We use the assumption \eqref{g-cond2} instead of \eqref{g-cond1} to obtain, for $a=0$, as in \eqref{vrho-kap-est'},
\begin{align}\label{d2g1-est'} 
\begin{split}
%\frac{1}{\lam_1}
  \|[d^2 g_1 (\rho_{\g}) (\rho_{J^s\g})^2, \k']\|_{W^0}
  & \ls t^{-b}\|d^2 g_1 (\rho_{\g}) (\rho_{J^s\g})^2\|_p  \|J^{b}\k' \|_{W^0}
\\
&\ls t^{-  b}\| \rho_{J^s\g}\|_{q'}^2  \|J^{b}\k \|_{W^0}
\\
&\ls  t^{-b-2\nu}  \|J^{b'+s'}\k \|_{W^0}^2 \|J^{b''+s''}\k \|_{W^0}^2 \|J^{b}\k' \|_{W^0},
\end{split}
\end{align}
where $s'+s''=s,$ $\nu:=d(1-\frac{1}{q'})$, $b= d/p$, 
and $b'$ and $b''$ are any non-negative numbers satisfying $b'+b''=d(1-\frac{1}{q'})$.
Since $d(1-\frac{1}{q'})\le |m|$ and $|s|+|a|\le |m|$, we can choose them so that $b'+s', b''+s''\le |m|$, giving
\begin{align}\label{d2g1-est1} 
  \|[d^2 g_1 (\rho_{\g}) (\rho_{J^s\g})^2, \k']\|_{W^0}
  &\ls  t^{-d(2-\frac{2}{q'}+ \frac{1}{p})}  \|\k \|_{W^m}^4 \|\k' \|_{W^m}.
\end{align}
%We also require that 
This completes the proof of \eqref{RHS-est'} for $i=1$ and $k=2$ and $a=0$, which suffices for $|m|=2$. 

\medskip
{\it Estimates for $g_2$}.
%We start with $d=3$ for simplicity and, as in the proof of Lemma \ref{lem:energy-est}
As above, we rely on the inequality \eqref{L2Ls-est}, but now need a different argument for the estimates
in view of the possible singular nature of the derivatives of the exchange-correlation term $\rho^\beta$.
%Again, we prove \eqref. %need to estimate the terms on the right-hand side of \eqref{RHS-est}. 
%\begin{align}\label{EXRHS-est}
%& |\lan J^m \k, [d^k  g (\rho_{\g}) (\rho_{J^s\g})^k, J^a\k] \ran_{W^0}|\ls  \| J^m\k\|_{W^0}
%	\|[d^k  g (\rho_{\g}) (\rho_{J^s\g})^k, J^a\k] \|_{W^0} 
%\end{align} 
%for  $k \le m, s\ge 1, s+a=m, m= 1, 2$.

We prove \eqref{RHS-est'} for $i=2$ and $|m|=k=1$ which implies $|s|=1, a=0$. 
To this end, we need to estimate $\|[d g_2 (\rho_{\g}) (\rho_{J\g}), \k] \|_{W^0}$.
We calculate explicitly
\begin{align} \notag& \| d g_2 (\rho_{\g}) (\rho_{J\g}) \, \k \|_{W^0}^2
= \beta^2 \int_{\R^3\times\R^3}\left| \rho^{\beta-1}_{\g}(x)  \, \rho_{J\g} (x) \, \k(x,y) \right|^2  dxdy
 \\
\notag & = \beta^2 \int_{\R^3} 
 	\rho^{2\beta-2}_{\g}(x)\, 
 	\rho_{J\g}^2 (x) \, ( \int_{\R^3} |\k(x,y)|^2  dy) dx
\\
\label{dg-est1} & = \beta^2 \int_{\R^3} 
 	\rho^{2\beta-1}_{\g}(x)\, 
 	\rho^2_{J\g} (x) \,  dx.
\end{align}

Now, using the relations $\g=\k^\ast \k$ and \eqref{rhoJ-est1} %Using this 
in \eqref{dg-est1}, $\rho_{\k^\ast \k} = \| \k \|_{L^2_y}^2$, 
and $ \| \k \|_{L^\infty_x L^2_y}\ls  \| \k \|_{L^2_rL^\infty_c}$, we find
\begin{align}\label{dg-est2}
\| d g_2 (\rho_{\g}) (\rho_{J\g}) \, \k \|_{W^0}^2
  & \le 2 \beta^2 \int_{\R^3} 
 \rho^{2\beta}_{\k^\ast \k}(x)\, 
 \rho_{J\k^\ast J\k} (x) \,  dx \notag \\
&\le 2 \beta^2  \| \k \|_{L^2_rL^\infty_c}^{4\beta} \| J\k \|_{L^2_xL^2_y}^{2}	.
\end{align}
The second factor on the right-hand side of \eqref{dg-est2} is equal $\|J\k \|_{W^0}^2$. 
For the first factor, we use \eqref{L2Ls-est} with $s=\infty$ and $\al=1$
\DETAILS{ \eqref{rho-est}, with $\ka'=\ka$ and $\al=1$, i.e.
\begin{align}\label{rho-est'}
 \| \k \|_{L^2_rL^\infty_c}
  &\ls t^{-b/2} \|\k \|_{W^b},\  b=d/2,
\end{align}\\
and we use the Sobolev-type inequality  \eqref{Sob-type-ineq} for the second one}
to find, for $ b=[d/2]+1$,
\begin{align} \label{dg-est3} 
\| d g_2 (\rho_{\g}) (\rho_{J\g}) \, \k \|_{W^0} & \lesssim %\beta
  t^{-\beta d} \|\k \|_{W^b}^{2\beta} \| J \k \|_{W^0},
\end{align}\\
which  yields \eqref{RHS-est'} for $|m|=1$.
%$m=k=s=1, a=0$ and $i=2$ and $d=2$.

\DETAILS{
Another way to estimate the right-hand side of \eqref{dg-est1}. 
We use the relation $\rho (J \g)=t\p \rho (\g)$ and the integration by parts to obtain 
\begin{align*}
& \int_{\R^3} \rho^{2\beta-1}(\g)\, \rho^2(J\g) \,  dx
  =t^2  \int_{\R^3} 	\rho^{2\beta-1}(\g)\, (\p\rho(\g))^2 \,  dx\\
	&= t^2 \frac{1}{2\beta} \int_{\R^3} \p(\rho^{2\beta}(\g))\, \p\rho(\g)=- t^2 \frac{1}{2\beta} \int_{\R^3} \rho^{2\beta}(\g)\, \p^2\rho(\g)\,  dx.
\end{align*}
Furthermore, %solving the last equality for $\int_{\R^3} 	\rho^{2\beta-1}(\g)\, (\p\rho(\g))^2 \,  dx$ and 
using $\rho (J \g)=t\p \rho (\g)$ again gives 
%\begin{align*}  t^2 2\beta \int_{\R^3} 	\rho^{2\beta-1}(\g)\, (\p\rho(\g))^2 \,  dx &=- t^2  \int_{\R^3} \rho^{2\beta-1}(\g)\, \p^2\rho(\g) \,  dx\\&=-   \int_{\R^3} \rho^{2\beta}(\g)\, \rho(J^2 \g) \,  dx. \end{align*}The last two relations imply 
\begin{align*}
& \int_{\R^3} \rho^{2\beta-1}(\g)\, \rho^2(J\g) \,  dx
  =\frac{1}{2\beta}\int_{\R^3} \rho^{2\beta}(\g)\, \rho(J^2 \g) \,  dx,
\end{align*}
which together with \eqref{dg-est1} gives 
\begin{align} \notag& \| d g_2 (\rho_{\g}) (\rho_{J\g}) \, \k \|_{W^0}^2
=- \frac12 \beta \int_{\R^3} \rho^{2\beta}_{\g}\, \rho_{J^2\g} \,  dx.
\end{align}

Assuming $2\beta\le 1$ and using H\"older's inequality and the density estimate \eqref{rho-est} 
(remember the notation $\k^\ast\k\equiv \g$) and letting 
$q:=\frac{1}{1-2\beta}$ and $b:=d(1-\frac{1}{q})=2 d \beta$ and using $\k^\ast\k\equiv \g$,  we see that 
\begin{align*}
 \| d g (\rho_{\g}) (\rho_{J\g}) \, \k \|_{W^0}^2
& \lesssim\big(\int_{\R^3}\rho_{\g}\big)^{2\beta} 
 	\big(\int_{\R^3}\rho_{J^2 \g}^{q} \big)^{1/q}  \\
& \lesssim \|\k\|_{L^{2}_r L^{2}_c}^{2\beta}t^{-b} \|J\k \|_{ W^0} \|\k\|_{W^b} %\big)^{1-2\beta} %\\& 	\lesssim \| J\k \|_{W^0}^2 \| \k \|_{L^2_rL^\infty_c}^{4\beta}.
\end{align*} }

\DETAILS{A similar estimate can obtained for $\|\k \, d g (\rho_{\g}) (\rho_{J\k^\ast\k}) \|_{W^0}$.
Using the GNK inequality \eqref{L2Ls-est} with $s=\infty$,  it then follows that ($d\leq 3$)
\begin{align*}
& \| d g_2 (\rho_{\g}) (\rho_{J\g}) \, \k \|_{W^0}
	\lesssim \| J\k \|_{W^0} \| \k \|_{L^2_rL^\infty_c}^{2\beta}
	\lesssim  t^{-d\beta} \| J\k \|_{W^0} \| \k \|_{W^2}^{2\beta} .
\end{align*}
which is sufficient according to the right-hand side of \eqref{energy-est}.}

We now consider \eqref{RHS-est'} with  $|m|=2$ %(note that $b=2$ and $d=2,3$ in this case)
and $k=1, a=0$ and $|s|=2$.
\DETAILS{ in \eqref{RHS-est}, we need to estimate, among others, the three terms
%\newpage

\begin{align} \label{EX2.1}
&  \| [d g (\rho_{\g}) (\rho_{J^m\g}), \, \k]\|_{W^0}, \quad
	\| [d g (\rho_{\g}) (\rho_{J\g}), \, J^{m-1} \k]\|_{W^0}, \quad
	\| [d^m g (\rho_{\g}) (\rho_{J\g})^m, \, \k]\|_{W^0}.
\end{align}}
%If $s$ is not integer, we assume we can handle fractional derivatives as if they were integer ones.
%For the first term in \eqref{EX2.1}, 
We compute as in \eqref{dg-est1} 
\begin{align} \notag \| d g_2 (\rho_{\g}) \rho_{J^s\g} \, \k \|_{W^0}^2
& = \beta^2 \int_{\R^3} \rho^{2\beta-2}_{\g}(x)\, 
 	\rho_{J^s\g}^2 (x) \, ( \int_{\R^3} |\k(x,y)|^2  dy) dx\\
\label{dgJ2-est1} & = \beta^2 \int_{\R^3} \rho^{2\beta-1}_{\g}(x)\, 
 	\rho_{J^s\g}^2 (x) \,  dx.
\end{align}
Using the relation  $J^2 (\k^\ast \k)= (J^2\k^\ast) \k +\k^\ast (J^2\k)+ 2  (J\k^\ast)  J\k)$ 
%(up-to numerical constants) $J^s (\k^\ast \k)= (J^s\k^\ast) \k +\k^\ast (J^s\k)+  (J\k^\ast)  J^{s-1}\k),\ 1< s \le 3,$
in \eqref{dgJ2-est1}, and $\beta \geq 1/2$, we find
\begin{align}
\| d g_2 (\rho_{\g}) \rho_{J^s\g} \, \k \|_{W^0}
& \lesssim \| J^s\k \|_{L^2_xL^2_y}  \| \k \|_{L^2_rL^\infty_c}^{2\beta}	
\label{dg2-est2} \\
\label{dg2-est1}
& + \| \k \|_{L^2_rL^\infty_c}^{2\beta-1} \| J\k \|_{L^4_xL^2_y}^2.
\end{align}
The term on the right-hand side of \eqref{dg2-est2} is of the same form obtained in \eqref{dg-est2},
while the term \eqref{dg2-est1} can be estimated using \eqref{xyrc} followed by the usual \eqref{L2Ls-est}: 
%To estimate the $L^4_xL^2_y$ norm we apply the following consequence of estimate \eqref{LqL2W-est1} 
%\begin{align}\label{GNKxz'}
%{\| J\k \|}_{L^4_xL^2_z} & \lesssim t^{-d/4} {\| J^2 \k \|}_{W^0}^{d/4} {\| J \k \|}_{W^0}^{(4-d)/4},\qquad d=2,3.
%\end{align} 
%Note that \eqref{GNKxz'} follows directly from \eqref{LqL2W-est1} for $d=2$ (with $q=2$ and $p=1$);
%for $d=3$ instead
%%As a consequence of \eqref{LqL2W-est1}, for $d=3$, we have
%%\begin{align}\label{GNKxz}
%%{\| \k \|}_{L^4_xL^2_z} & \lesssim t^{-3/4} {\| J \k \|}_{W^0}^{3/4} {\| \k \|}_{W^0}^{1/4}.
%%\end{align} 
%%To prove \eqref{GNKxz} instead 
%we apply \eqref{LqL2W-est1} twice, first with $q=2, p=6/5$, and then with $q=3/2, p=1$:
%\begin{align*}%\label{LqL2W-est'}
%t {\| \k \|}_{L^4_x L^2_z}^2 & \lesssim {\| J \k \|}_{W^0} {\| \k \|}_{L^{3}_x L^2_z} 
%  \lesssim {\| J \k \|}_{W^0}^{3/2}  {\| \k \|}_{L^{2}_x L^2_z}^{1/2}.
%\end{align*}
%This gives us \eqref{GNKxz'} for $d=3$ by letting $\kappa \mapsto J\kappa$. 
%%since $ {\| \k \|}_{L^2_x L^2_z}= {\|\k \|}_{W^0}$. 
%Using \eqref{GNKxz'} in \eqref{dg2-est1} gives
\begin{align*}
\| d g_2 (\rho_{\g}) \rho_{J^s\g} \, \k \|_{W^0}
  & \lesssim \| \k \|_{L^2_rL^\infty_c}^{2\beta} \| J^s\k \|_{W^0} +  \| \k \|_{L^2_rL^\infty_c}^{2\beta-1} \| J\k \|_{L^2_rL^4_c}^2
\\
& \lesssim t^{-d\beta} {\| \k \|}_{W^2}^{2\beta} \| J^s\k \|_{W^0}
  + t^{-d/2} \| \k \|_{L^2_rL^\infty_c}^{2\beta-1} {\| J^2 \k \|}_{W^0}^{d/2} {\| J \k \|}_{W^0}^{(4-d)/2}
\\
& \lesssim t^{-d\beta} {\| \k \|}_{W^2}^{2\beta+1}
\end{align*}
having used $\beta\geq 1/2$ in the last inequality.

%%%
%%%
\DETAILS{\begin{align*}
I^4 \lesssim {\| \partial_x f \|}_{L^{6/5}_x}^2
& = \Big( \int_{\R^3} \Big| \p_x \int_{\R^3} 
	\big|{J\k}(z,x) \big|^2 dz \Big|^{6/5} d x \Big)^{5/3}
\\& = \Big( \int_{\R^3} \Big| \p_x \int_{\R^3} 
	\big|e^{-ir(x+r/2)/2t}\widetilde{J\k}(r,x+r/2) \big|^2 dr \Big|^{p} d x \Big)^{5/3}\\
& = \Big( \int_{\R^3} \Big|2\re \int_{\R^3} \partial_c (e^{-irc/2t}\widetilde{J\k}(r,c))
	\, \overline{e^{- irc/2t}\widetilde{J\k}}(r,c) dr \Big|^{6/5} dx \Big)^{5/3}
\end{align*}
By \eqref{deriv-j-rel'}, this gives
\begin{align*}I^4
&\le 2 t^{-2} \Big( \int_{\R^3} \Big| \int_{\R^3} \big| \widetilde{J^2\k}(r,x+r/2) \big| \,
	\big| \widetilde{J\k}(r,x+r/2) \big| dr \Big|^{6/5} dx \Big)^{5/3}
\\
& = 2 t^{-2} \Big( \int_{\R^3} \Big| \int_{\R^3} \big| J^2\k(z,x) \big| \,
	\big| J\k(z,x) \big| dz \Big|^{6/5} dx \Big)^{5/3}.
\end{align*}
Denote $h:=\| J^2 \k \|_{L^2_z}$ %\int_{\R^3} \big| J^2\k(z,x) \big|^2 \, dz$ 
and $g:=\| J \k \|_{L^2_z}$. Using Cauchy-Schwarz in $z$ and then H\"older in $x$ we get
\begin{align*}
t^2 I^4 
%& \lesssim \Big( \int_{\R^3} \Big| \int_{\R^3} \big| J^2\k(z,x) \big| \, \big| J\k(z,x) \big| dz \Big|^{6/5} dx \Big)^{5/3} \\ 
& \lesssim  \Big( \int_{\R^3} h^{6/5} \,
	g^{6/5} dx \Big)^{5/3} \lesssim  \Big( (\int_{\R^3} h^{2} dx)^{3/5}  \,
	(\int_{\R^3} g^{3}  dx)^{2/5} \Big)^{5/3} \\
&\lesssim   {\| J^2 \k \|}_{L^2_x L^2_z}^2
	 {\| J \k \|}_{L^3_x L^2_z}^{2}  = {\| J^2 \k \|}_{W_0}^2  {\| J \k \|}_{L^3_x L^2_z}^2
\end{align*}

We can then use a similar argument to the one above to estimate the last mixed-norm above.
More precisely, with the same notation above for $f$, and using Sobolev's embedding 
$\dot{W}^{1,1} \subset L^{3/2}$, we have  (with $c=x+r/2$)
\begin{align*}
{\| J \k \|}_{L^3_x L^2_z}^2  & = {\| f \|}_{L^{3/2}} \lesssim {\| \partial_x f \|}_{L^1} 
\\
& \lesssim \int_{\R^3} \Big| \int_{\R^3} (\partial_c e^{-irc/2t}\widetilde{J\k}(r,c))
	\, \overline{e^{-irc/2t}\widetilde{J\k}}(r,c) dr \Big| dx
\\
& \lesssim t^{-1} \int_{\R^3} \int_{\R^3} \big| \widetilde{J^2\k}(r,x+r/2) \big| \,
	\big| \widetilde{J\k}(r,x+r/2) \big| dr dx
\\
& = t^{-1} \int_{\R^3} \int_{\R^3} \big| J^2\k(z,x) \big| \, \big| J\k(z,x) \big| dz dx
\\
& = t^{-1} {\| J^2 \k \|}_{W_0} {\| J \k \|}_{W_0}.
\end{align*}
It follows that}
%%%
%%%

Now, we consider \eqref{RHS-est'} for $i=2$ with $k=1$ and $|s|=1=|a|$. 
We compute as in \eqref{dg-est1}
\begin{align} \notag\| d g_2 (\rho_{\g}) (\rho_{J \g}) \, J \k \|_{W^0}^2
 & = \beta^2 \int_{\R^3} 
 	\rho^{2\beta-2}_{\g}(x)\, 
 	\rho_{J \g}^2 (x) \, ( \int_{\R^3} |J \k(x,y)|^2  dy) dx\\
\label{dgJJ-est1} & = \beta^2 \int_{\R^3} 
 	\rho^{2\beta-2}_{\g}(x)\, \rho^2_{J \g} (x) \rho_{J\k^\ast J \k} (x)\,  dx.
\end{align}
Using the inequality %\begin{align}\label{rhoJa-est1} \left| 	\rho_{J (\k^\ast \k)} (x) \right|^2  & \le  	  2\rho_{J \k^\ast J \k} (x) \rho_{\k^\ast \k} (x). \end{align} similar to
\eqref{rhoJ-est1}, we find
\begin{align*} 
\| d g_2 (\rho_{\g}) (\rho_{J \g}) \, J \k \|_{W^0}^2 & \le 2 \beta^2 \int_{\R^3} 
\rho^{2\beta-1}_{\g}(x)\,  \rho_{J \k^\ast J \k} (x) \rho_{J\k^\ast J \k} (x)\, dx
\\
&\lesssim \beta^2 \| \k \|_{L^2_rL^\infty_c}^{4\beta-2} \| J \k \|_{L^{4}_xL^2_y}^{4}.
\end{align*}
%where $1/p+1/q=1$. We choose $p$ and $q$ depending on $a=m-1$.\\ %For $a=1$, we take $p=q=2$. For $a$ large, we take $p=\infty$ and $q=1$.\\ %The r.h.s. is of the same type as that of \eqref{dg-est2}  and therefore as for the latter term we have (cf. \eqref{dg-est3})\\ 
The right-hand side is a product of terms we treated above and we see that
\begin{align} 
\label{dg2-fin}
\| d g_2 (\rho_{\g}) (\rho_{J \g}) \, J \k \|_{W^0} \lesssim t^{-d\beta} \| \k \|_{W^2}^{2\beta+1}.
\end{align}

Finally, we consider \eqref{RHS-est'} for $i=2$ with $k>1, a=0$ and $|s|=1$.
We compute as in \eqref{dg-est1} 
\begin{align} \notag 
\| d^k g_2 (\rho_{\g}) (\rho_{J\g})^k \, \k \|_{W^0}^2
& = (\beta (1-\beta))^2 \int_{\R^3} \rho^{2\beta-2k}_{\g}(x)\, 
  \rho_{J\g}^{2k} (x) \, ( \int_{\R^3} |\k(x,y)|^2  dy) dx
\\
\label{d2g-est1} & = (\beta (1-\beta))^2 \int_{\R^3} \rho^{2\beta-2k +1}_{\g}(x)\, 
  \rho^{2k}_{J\g} (x) \,  dx.
\end{align}
%\newpage
Using \eqref{rhoJ-est1} in \eqref{d2g-est1}, we find, for $\beta\geq(k-1)/2$,
\begin{align*}
\| d^k g_2 (\rho_{\g}) (\rho_{J\g})^k \, \k \|_{W^0}^2
 & \lesssim %4 (\beta (1-\beta))^2 
 \int_{\R^3} \rho^{2\beta-k + 1}_{\g}(x)\, \rho_{J\k^\ast J\k}^{k} (x) \, dx 
\\
& \lesssim \| \k \|_{L^2_rL^\infty_c}^{4\beta-2k+2} \| J\k \|_{L^4_xL^2_y}^{2k}.
\end{align*}
%The r.h.s. is of the exactly same form as that of \eqref{dgJJ-est2}  and therefore as for the latter term we have (cf. \eqref{dgJJ-est3})\\ 
The square root of this last quantity is again a product of terms like those treated above and, 
using \eqref{xyrc} and \eqref{L2Ls-est}, %, see \eqref{dg2-est1},
can be bound by the right-hand side of \eqref{dg2-fin}.
This concludes the proof of \eqref{RHS-est'}-\eqref{RHS-estp'} and the energy estimate \eqref{energy-est}.
\end{proof}

\begin{remark}[Higher dimensions]\label{rem:d>3'}
The calculation done for general $k>1$ in \eqref{d2g-est1} shows that one can close this type 
of estimates even for $k>2$ provided $\beta \geq (k-1)/2$. 
Since we need $k = [d/2]+1$ derivatives to deduce the necessary 
sharp $L^\infty_cL^2_r$ decay (through \eqref{L2Ls-est}),
this means that in dimension $d>3$ it is possible to treat the case of $\mathrm{xc}(\rho) = \rho^\beta$
for $\beta \geq (1/2)[d/2]$.
Of course, when applying $k$ derivatives with $k>2$ there are several other terms to consider besides \eqref{d2g-est1};
however, these other terms can all be treated with similar arguments to those in 
the proof of Lemma \ref{lem:energy-est} above,
using \eqref{xyrc} and proper applications of \eqref{L2Ls-est}. %, and 
%suitable variants of decay-interpolation inequalities like \eqref{GNKxz'}.
\end{remark}

\bigskip
\section{Local existence, GWP and scattering for \eqref{KS-k}}\label{sec:loc-exist}%: LOC
In this section we will use the non-abelian analogues of Sobolev 
spaces based on the space of Hilbert-Schmidt operators introduced in \eqref{LOC1'},
which we recall here for convenience:
%\DETAILS{with the smoothness grading provided by commutators with $\nabla$:
%More precisely, let
\begin{align}\label{LOC1} 
V^s &:= \big\{ \k \in I^2\, : \, \sum_{|\alpha| \leq s} \| D^\alpha \k  \|_{I^2} < \infty \big\},
\end{align}
%\begin{align}\label{D}
with $D_\ell\k := [\partial_{x_\ell} ,\k], D =(D_1,\dots,D_d) = [\n,\,\cdot\,]$, 
for any positive integer $s$.
Note that $V^0  =  I^2 = W^0$, see \eqref{defIr} and \eqref{LOC2}.
%The spaces \eqref{LOC1} are the non-commutative analogues of Sobolev spaces.

\begin{thm}[Local existence]\label{pr:loc1} 
Assume \eqref{g1g2}-\eqref{g-cond1'} and consider  equation \eqref{KS-k} with initial data $\k(0)=\k_0$. Then we have the following: %With the spaces  $V^k$ and $W^k$ defined in \eqref{LOC1} and \eqref{LOC2}, we have the following:

\begin{itemize}

\smallskip
\item[(i)] ({\it Local existence}) If $\k_0\in V^{[d/2]+1}$, then there exists $T_0=T_0(\|\k_0\|_{V^{[d/2]+1}})>0$ and a unique solution 
$\k \in C([-T_0,T_0], V^{[d/2]+1})$ of \eqref{KS-k} with $\k(0)=\k_0$.

\smallskip
\item[(ii)] ({\it Energy Estimate}) 
If $\k_0 \in V^k$, $k \geq [d/2]+1$, then the solution $\k \in C([-T_0,T_0], V^{[d/2]+1})$ of  \eqref{KS-k} %with ,  
from part (i) satisfies the following energy estimate:
%there exists $p>1$ (depending on $g$) such that
\begin{align}\label{LOCEE}
\frac{d}{dt} {\|\k(t)\|}_{V^k} \leq \lambda |t|^{-p} \cdot P({\| \k(t) \|}_{W^{[d/2]+1}}) 
	\cdot {\|\k(t)\|}_{V^k} 
\end{align}
for some $p>1$ (depending on $g$), where $\lambda = |\lambda_1|+|\lambda_2|$, 
and $P$ is a polynomial with positive coefficients which depend on $g$,$d$ and $k$.

\smallskip
\item[(iii)] ({\it Continuity of the weighted norm}) 
For $\k_0 \in V^k \cap W^b$, with $[d/2]+1\leq b\leq k$, 
the map $t \rightarrow \k(t)$ is continuous from $[-T_0,T_0]$ to $V^k \cap W^b$.

\end{itemize}

\end{thm}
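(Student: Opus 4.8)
Write $m:=[d/2]+1$ and let $U_0(t):\k\mapsto e^{-it\Delta}\k\,e^{it\Delta}$ denote the free propagator, which solves $\p_t\k=i[-\Delta,\k]$ and is an isometry of each space $V^s$ (it commutes with the derivations $D_\ell$). For \emph{part (i)} the plan is a standard contraction argument: I would recast \eqref{KS-k} in Duhamel form
\[
\k(t)=U_0(t)\k_0+i\int_0^t U_0(t-\sigma)\,\bigl[g(\rho_{\k^\ast\k}),\k\bigr](\sigma)\,d\sigma
\]
and run a fixed-point argument for the resulting map on the ball $\{\k\in C([-T,T],V^m):\sup_{|t|\le T}\|\k(t)\|_{V^m}\le 2\|\k_0\|_{V^m}\}$ with $T=T(\|\k_0\|_{V^m})$ small. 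Since $\|AB\|_{I^2}\le\|A\|_{\mathrm{op}}\|B\|_{I^2}\le\|A\|_{I^2}\|B\|_{I^2}$ and each $D_\ell$ is a derivation, every $V^s$ is trivially a Banach algebra under operator composition; the real content is the bound $\|[g(\rho_{\k^\ast\k}),\k]\|_{V^m}\le P(\|\k\|_{V^m})$ for a polynomial $P$, and the matching Lipschitz estimate on differences, which I would derive from the structural hypotheses \eqref{g-cond0}--\eqref{g-cond1'}, the density estimate of Lemma \ref{lem:rho-est}, the product estimate of Lemma \ref{lem:fkap-est}, and the scale-invariant ($t$-independent) form \eqref{GNI-non-ab} of the non-commutative Gagliardo-Nirenberg inequality. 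The exchange-correlation part $g_2(\rho)=\rho^\beta$ with $\beta<1$ must not be estimated by differentiating the multiplier directly (this produces singular factors $\rho^{\beta-1}$); instead one pairs with $\k$ and uses the elementary inequality \eqref{rhoJ-est1} (with $D$ in place of $J$), which replaces $\rho^{2\beta-2}\rho_{D\g}^2$ by $2\rho^{2\beta-1}\rho_{D\k^\ast D\k}$ — the hypothesis $\beta\ge 1/2$ in \eqref{g-condbeta} being precisely what makes the exponent of $\rho$ non-negative. Uniqueness follows from the contraction (or from a Gronwall bound on the difference of two solutions).

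For \emph{part (ii)} I would differentiate $\|\k(t)\|_{V^k}^2=\sum_{|\alpha|\le k}\|D^\alpha\k\|_{I^2}^2$. Writing $\p_t D^\alpha\k=i[h_\g,D^\alpha\k]+(\text{commutator source terms})$ and using that $\re\langle A,i[h_\g,A]\rangle_{I^2}=0$ because $h_\g$ is self-adjoint, the leading order cancels and the growth of the $V^k$-norm is governed by inner products of $D^\alpha\k$ with terms of the schematic form $\bigl[\,d^j g(\rho_\g)\prod_i\rho_{D^{\beta_i}\g}\,,\,D^a\k\,\bigr]$ with $\sum_i|\beta_i|+|a|=|\alpha|$ — the analogues, with $D$ replacing $J$, of the terms in Lemma \ref{prop:energy-relat} and Lemma \ref{lem:energy-est}. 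I would estimate these via the Cauchy-Schwarz inequality in $I^2$ followed by the density bounds of Section \ref{sec:den-est} and the Gagliardo-Nirenberg inequality, arranging in Moser fashion that at most one factor carries the top-order derivatives, so that $\|\k\|_{V^k}$ enters only linearly while the remaining (lower-order, weighted) factors produce the polynomial $P(\|\k\|_{W^{[d/2]+1}})$; the rate $|t|^{-p}$ with $p>1$ is the decay supplied by \eqref{L2Ls-est} once the estimates are written in the weighted norms, and for $|t|$ bounded it is simply a constant, its role being integrability at infinity in the later continuation argument.

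For \emph{part (iii)} I would first promote $V^{[d/2]+1}$-regularity to $V^k$-regularity by persistence: Gronwall applied to \eqref{LOCEE}, with the continuous bounded factor $\|\k(t)\|_{V^{[d/2]+1}}$ coming from part (i), keeps $\|\k(t)\|_{V^k}$ finite on $[-T_0,T_0]$, and strong continuity $t\mapsto\k(t)\in V^k$ then follows from weak continuity together with continuity of $t\mapsto\|\k(t)\|_{V^k}$ (the integrated energy identity), $V^k$ being a Hilbert space. For the weighted component, write $J_\ell=[j_\ell,\cdot]=X_\ell+2it\,D_\ell$ with $X_\ell\k:=[x_\ell,\k]$; since $[x_\ell,\p_m]$ is scalar, $X_\ell$ commutes with each $D_m$ on half-densities, and $X_\ell$ commutes with the multiplication operator $g(\rho_\g)$, so it is enough to propagate the $I^2$-norms of $X^{\alpha'}D^\beta\k$ for $|\alpha'|+|\beta|\le b\le k$. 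The identity $[x_\ell,-\Delta]=2\p_{x_\ell}$ gives $D_\g X_\ell\k=X_\ell D_\g\k+2iD_\ell\k$, so iterating this and again using $\re\langle A,i[h_\g,A]\rangle_{I^2}=0$, the evolution of $\|X^{\alpha'}D^\beta\k\|_{I^2}^2$ is controlled by strictly lower-order quantities $X^{\alpha''}D^{\beta''}\k$ times $L^p$-norms of $g(\rho_\g)$ and its derivatives (estimated as in part (ii), but now without any need for sharp time-decay); a finite induction on $|\alpha'|+|\beta|$ together with Gronwall yields boundedness on $[-T_0,T_0]$, and the norm-plus-weak-continuity argument again upgrades this to strong continuity. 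Since $J^\alpha\k(t)$ is a finite combination of terms $t^j X^{\alpha'}D^\beta\k(t)$, this gives $\k\in C([-T_0,T_0],W^b)$, hence $\k\in C([-T_0,T_0],V^k\cap W^b)$.

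The main obstacle is not a single deep estimate but the nonlinear bookkeeping: ensuring that every term produced by differentiating $g(\rho_{\k^\ast\k})$ — by the $D_\ell$'s for the regularity estimates and by the $X_\ell$'s (equivalently $J_\ell$'s) for the weighted ones — actually closes in the relevant norm. The only genuinely delicate point is the exchange-correlation term $\rho^\beta$ with $\beta<1$, whose derivatives can only be controlled after pairing with $\k$ and invoking \eqref{rhoJ-est1}; this is where the restriction $\beta\ge(k-1)/2$ (hence $\beta\ge 1/2$ for $d\le 3$) enters. Everything else is a routine transcription, to the non-commutative $I^2$-setting, of the standard $H^s$ local well-posedness and persistence-of-regularity theory for Schr\"odinger-type equations, built on the tools of Sections \ref{sec:Pf-tildekap-est}, \ref{sec:den-est} and \ref{sec:approx-conservq}.
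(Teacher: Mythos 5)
Your proposal is correct and follows essentially the same route as the paper: a Duhamel/contraction argument in $V^{[d/2]+1}$ for (i), reducing the nonlinear bound to the analogue of Lemma \ref{lem:energy-est} with $D$ and $V^k$ in place of $J$ and $W^k$ (using the $t$-independent Gagliardo--Nirenberg inequality \eqref{GNI-non-ab}/\eqref{loc13}), the same commutator energy identities for (ii), and a weighted energy estimate of the form \eqref{loc20} for (iii). Your decomposition $J_\ell=X_\ell+2itD_\ell$ for part (iii) is a minor presentational variant of the paper's direct estimate on $\frac{d}{dt}\|\k(t)\|_{W^k}$ and changes nothing of substance.
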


\begin{proof}
{\it (i)}
Denote by $\alpha_t$ the linear flow associated with the operator $\k \mapsto i[-\Delta, \k]$.
Note that $\alpha_t$ is unitary on $V^k$.
We obtain the solution $\kappa$ as a fixed point of the map 
\begin{align}\label{loc0}
\Phi(\k(t)) =  \alpha_t(\k_0) + \int_0^t \alpha_{t-s} (\big[g(\rho(\g(s))),\k(s) \big]) ds 
\end{align}
in the space
\begin{align*}
\big\{ \k \in C([-T_0,T_0], V^{[d/2]+1}), \, \sup_{[-T_0,T_0]} {\| \k(t) \|}_{V^{[d/2]+1}} 
	\leq 2 {\|\k_0\|}_{V^{[d/2]+1}} \big\},
\end{align*}
for a sufficiently small $T_0$.
For this it suffices to prove, for all $k \leq [d/2]+1$, the estimates
\begin{align}\label{loc10}
& {\| \Phi(\k(t)) \|}_{V^{[d/2]+1}} \leq {\| \k_0 \|}_{V^{[d/2]+1}}
	+ \int_0^t  P\big({\|\k(s)\|}_{V^{[d/2]+1}}\big) ds,\\
%\end{align}
%where $P$ is a polynomial with positive coefficients,  and a similar estimate for the difference
%\begin{align*}%\label{loc10'}
\label{loc10'} & {\| \Phi(\k_1(t)) - \Phi(\k_2(t)) \|}_{V^{[d/2]+1}} 
\leq \int_0^t  Q\big({\|\k\|}_{V^{[d/2]+1}}\big) {\| \k_1(s) - \k_2(s) \|}_{V^{[d/2]+1}} ds,
\end{align}
for some polynomials  $P$  and $Q$ with positive coefficients.

To prove \eqref{loc10} we first notice that $[D,\alpha_t]=0$ and thus \eqref{loc10} can be reduced to proving
that for all $k \leq [d/2]+1$
\begin{align}\label{loc11}
{\| D^k \big[ g(\rho(\g)),\k \big] \|}_{V^0} \lesssim P\big({\|\k\|}_{V^{[d/2]+1}}\big).
\end{align}
Estimate \eqref{loc11} then follows similarly to the proof of Proposition \ref{lem:energy-est},
(which deals with $J$ and the space $W^k$ instead of $D$ and the space $V^k$).
First we commute $D$ with %the vector-field 
$D_\g\k := \p_t \k - i [h_{\g}, \k]$
(in the same way that we commuted $J$ in Proposition \ref{prop:energy-relat}) to obtain 
\begin{equation}
 D_\g D\k = D D_\g \k + i [d  g (\rho_{\g}) \rho_{D\g},\k]
\end{equation}
%see \eqref{nonlin-transl-inf}
% and recall the notation $D_\g\k := \p_t \k - i [h_{\g}, \k]$.
%A proof of this relation is given in Subsection \ref{sec:Jk-evol-pf}.  
We then use 
$\|f \k \|_{V^0} \ls \|f \|_{L^p} \| \k \|_{L^2_rL^s_c}$, $1/p+1/s=1/2$,
see \eqref{fkap-est0}, and \eqref{rho-est'}, %Lemma \ref{lem:rho-est}
followed by the Gagliardo-Nirenberg-Sobolev type inequality
%(instead of \eqref{tildekap-est})
\begin{align}\label{loc13}
\|\k \|_{L^2_rL^s_c} &\ls \|\k \|_{V^b}^\al \| \k \|_{V^0}^{1-\al},
\end{align}
for $\al b=d(\frac12-\frac1s)$, $s\ge 2$, see \eqref{GNI-non-ab}, to find \eqref{loc10}.
%Notice that \eqref{loc13} can be obtained in a similar way to \eqref{L2Ls-est},
%in particular using \eqref{GNSc} without the exponential factor.
The proof of the estimate for the differences is similar so we skip the details.

\medskip
{\it (ii)}
In Lemma \ref{lem:energy-est} we proved a  more precise version of \eqref{LOCEE}
with the weighted $W^k$-norm replacing the $V^k$-norm.
Therefore, we leave to the reader the details of the proof of the 
more standard energy inequality \eqref{LOCEE} which follows from similar arguments.

\medskip
{\it (iii)}
%\comment{This is easy (easier than what we prove already) but might be worth some discussion}
This property follows from similar (short-time) energy estimates. 
Continuity of the map $t \in [0,T] \mapsto \k(t) \in V^k$ 
follows essentially from \eqref{loc11} which also shows
\begin{align*}
\frac{d}{dt} \| \k(t) \|_{V^k} \lesssim P\big({\|\k\|}_{V^{[d/2]+1}}\big).
\end{align*}
Continuity in the weighted space, $W^b$, follows from the analogous weighted energy estimate
\begin{align}\label{loc20}
\frac{d}{dt} \| \k(t) \|_{W^k} \lesssim P\big({\|\k\|}_{V^{[d/2]+1}}\big) \| \k(t) \|_{W^k},
\end{align}
which can be obtained by the exact same arguments used 
in the proof of \eqref{lem:energy-est}, making use of \eqref{loc13} instead of \eqref{L2Ls-est}.
\end{proof}

\begin{proof}[Proof of Proposition \ref{prop:LD->ST}] %\eqref{LD->ST}
%[{\bf Proof of GWP and scattering for \eqref{KS-k}}]
In view of item (i) of Theorem \ref{pr:loc1}, in order to continue a local-in-time solution of \eqref{KS-k}
to a global one, it suffices to obtain a uniform in time a priori bound for the $V^k$-norm with $k\geq [d/2]+1$.
This follows by an application of Gronwall's inequality to \eqref{LOCEE} 
with the uniform bound $\|\k(t)\|_{W^{[d/2]+1}} \lesssim \|\k_0\|_{W^{[d/2]+1}}$ 
given by \eqref{Jb-est} in Proposition \ref{prop:AprioriBnds}.

The scattering property for equation \eqref{KS-k} in the space $V^0$ 
(also in $V^k\cap W^b$, $[d/2]+1\leq b \leq k$) 
is proven by standard arguments as follows. Let $\al_t(\ka)$ be the linear evolution 
$\al_t(\k):=e^{i \Delta t} \k  e^{-i \Delta t}$. 
Define $\tilde\k(t):=\al_{-t}(\ka(t))$ and use \eqref{KS-k} to compute
\[\p_t \tilde\k(t)=\al_{-t}(i [g(\rho_{\k^*\k}),\ka(t)]).\]
Writing $\tilde\k(t)$ as the integral of its derivative, 
using the above relation, taking the $(I^2=V^0)$-norm of the resulting identity and using the unitarity of $\al_{-r}$ gives
\begin{align}
\nonumber
{\| \tilde\k(t) - \tilde\k(s)\|}_{V^0} & \ls \int_s^t{\| \al_{-\tau}(i [g(\rho_{\k^*\k}),\ka(\tau)])\|}_{V^0} \, d\tau
\\
\label{ka-t-est}
& \ls \int_s^t{\|  [g(\rho_{\k^*\k}),\ka(\tau)]\|}_{V^0} \, d\tau.
\end{align}
Then we apply estimate \eqref{fkap-est0} with $p=\infty, s=2$,
use the conditions \eqref{g-cond0} and \eqref{g-cond0'} on $g_1$ and $g_2$,
the estimate \eqref{rho-est} (with $b=b'=[d/2]+1$, so that in particular $\alpha,\alpha' < 1$), 
to obtain
\begin{align*}
\nonumber 
{\|  [g(\rho_{\k^*\k}),\ka(\tau)]\|}_{V^0} 
\nonumber 
& \ls {\| g(\rho_{\k^*\k})\|}_\infty  {\| \k(\tau) \|}_{V^0}
\\
\nonumber
& \ls \big( {\|\rho_{\k^*\k}\|}_{q}^a + {\|\rho_{\k^*\k}\|}_\infty^{\beta}  \big) {\| \k(\tau) \|}_{V^0}
\\
%\label{ka-t-est2}
& \ls %\big( 
  |\tau|^{-d(1-1/q)a} + |\tau|^{-d\beta} 
  %\big) 
  %\| \k(\tau) \|_{W^{[d/2]+1}}^2 
  %{\| \k(\tau) \|}_{V^0},
\end{align*}
where the parameters $a$ and $\beta$ above are those appearing in \eqref{g-cond0} and \eqref{g-condbeta},
and the implicit constant in the last inequality depends on $\| \k(\tau) \|_{W^{[d/2]+1}}$.
Since $a$ and $\beta$ satisfy $d(1-1/q)a$ and $d\beta > 1$, the integrand in \eqref{ka-t-est} is integrable in time. %{\bf(details)}. 
Hence $\tilde\k(t)$ has the Cauchy property and therefore converges to some $\k_\infty \in V^0$ as $t\ra \infty$. 
This implies
% {\bf(not finished)}%since the $V_0$ 
%norm of the nonlinearity is integrable in time, as implied by the proof of Lemma \ref{lem:energy-est} or \eqref{LOCEE}. 
%This means that there exists $\k_\infty \in V_0$ such that
\begin{align}\label{kapscat'}
& {\| \k(t) - e^{i \Delta t} \k_\infty e^{-i\Delta t}\|}_{V^0} \ra 0.
\end{align}
% We also have a rate
\end{proof}

\DETAILS{ \begin{align}\label{Jb-est'} \| \al_\tau^0([g(\rho_{\g(s)}),\k(s)])\|_{V^0_\tau}&\ls \| [g(\rho_{\g(s)}),\k(s)]\|_{V^0_s}\\
\label{fkap-est'}&\ls \|g(\rho_{\g(s)})\|_{L^{p}} \|\k(s)\|_{L^2_rL^{a}_c}\\
\label{fkap-est''}& \ls s^{-b}\|g(\rho_{\g(s)})\|_p  \|\k(s) \|_{H^b_s}\\
& \ls s^{-b}\|\rho_{\g(s)}\|_q  \|\k(s) \|_{H^b_s}\\
\label{rho-est'}& \ls s^{-b} s^{-2b'} \|\k(s) \|_{H^{b'}_s}^2 \|\k(s) \|_{H^b_s}. %\| J^b\k\|_{HS}\ls \| x^b\k_0\|_{HS}
 \end{align}}

%%%%%%%%%%%%%%%%%%%%%%%%%%%%%%%%%%%%%%%%%%%%%%%%%%%%%%%%%%%
%%%%%%%%%%%%%%%%%%%%%%%%%%%%%%%%%%%%%%%%%%%%%%%
%\section{Symmetries} %of the Bogoliubov-de Gennes equations \label{sec:sym} 
% \paragraph{\bf Symmetries.}

%\bibliographystyle{plain}
%\bibliography{Biblio}
%%\bibliography{Biblio1}

\begin{thebibliography}{1}

\bibitem{ACV} 
W. Abou Salem, T. Chen and V. Vougalter.
\newblock On the generalized semi-relativistic Schr\"odinger-Poisson system in $\R^n$. 
\newblock {\em Doc. Math.} 18 (2013), 343--357.



\bibitem{BCS} 
A. Borzi, G. Ciaramella and M. Sprengel.
A theoretical investigation of time-dependent Kohn-Sham equations. 
{\em SIAM J. Math. Anal.} 49 (2017), no. 3, 1681--1704. 



\bibitem{BoveDPF}
A. Bove, G. Da Prato and G.Fano.
\newblock {On the {H}artree-{F}ock time-dependent problem}.
\newblock {\em Comm. Math. Phys.} 49 (1976), no. 1, 25--33.
%\newblock \url{http://projecteuclid.org/euclid.cmp/1103899939}.


%\bibitem{BdePF} A., Bove, G. Da Prato, F. Fano, An existence proof for the Hartree-Fock time-dependent problem with bounded two-body interaction. Commun. math. Phys. 37, 183 (1974).

\bibitem{Burk} 
K. Burke. 
Perspective on density functional theory. 
{\em J. Chem. Phys.} 136, 150901 (2012); https://doi.org/10.1063/1.4704546.

\bibitem{BWG} 
K. Burke, J. Werschnik, and E.K.U. Gross.
Time-dependent density functional theory: Past, present, and future. {\em J. of Chem. Phys.} 123, 062206, 2005



\bibitem{CLeBL}
E. Canc\`es, C. Le Bris and P.-L. Lions.
\newblock Molecular simulation and related topics: some open mathematical problems.
\newblock {\em Nonlinearity}  21  (2008), T165--T176. 



\bibitem{Chad}
J. Chadam.
\newblock {The time-dependent {H}artree-{F}ock equations with {C}oulomb two-body interaction}.
\newblock {\em Comm. Math. Phys.} 46 (1976), no. 2, 99--104.
%\newblock \url{http://projecteuclid.org/euclid.cmp/1103899583}.


\bibitem{ChadGl} 
J. Chadam and R. Glassey.
\newblock Global existence of solutions to the Cauchy problem for time dependent Hartree equations. 
\newblock {\em J. Mathematical Phys.} 16 (1975), 1122--1130. 
 	
\bibitem{ChPavl} 
T. Chen, Y. Hong and N. Pavlovi\'c.
\newblock Global well-posedness of the NLS system for infinitely many fermions.  
\newblock {\em Arch. Ration. Mech. Anal.} 224 (2017), no. 1, 91--123.


\bibitem{ChPavl2} 
T. Chen, Y. Hong and N. Pavlovi\'c.
\newblock On the scattering problem for infinitely many fermions in dimensions $d\geq3$ at positive temperature. 
\newblock {\em Ann. Inst. H. Poincar\'e Anal. Non Lin\'eaire} 35 (2018), no. 2, 393--416.



\bibitem{CS1} 
I. Chenn and I. M. Sigal.
\newblock On effective PDEs of quantum physics. 
{\it New Tools for Nonlinear PDEs and Applications}, %Trends in Mathematics, 
M. D'Abbicco et al. (eds.), 
%To appear in the {\it Proceedings of the 11th ISAAC 2017}, 
Birkh\"auser series ``Trends in Mathematics''. %, https://doi.org/$10.1007/978-3-03010937-0_ 1$ . %, Research Perspectives". 

 

%\bibitem{CS2}  I. Chenn and I. M. Sigal. 
%\newblock Macroscopic electrostatics at positive temperature from the density functional theory. In preparation.

\bibitem{CollS} 
C. Collot and  A.-S. de Suzzoni. 
\newblock Stability of equilibria for a Hartree equation for random fields.
\newblock {\em J. Math. Pures Appl.} 137 (2020), no. 9, 70-100.


\bibitem{GV1}
J. Ginibre and G Velo. %Stephen~J. Gustafson and Israel~Michael Sigal.
\newblock On a class of nonlinear Schr\"odinger equations. II. Scattering theory, general case. 
\newblock {\em J. Funct. Anal.} 32 (1979), no. 1, 33--71.


\bibitem{GR}
E. K. U. Gross and E. Runge. 
Density-functional theory for time-dependent systems. 
{\it Phys. Rev. Lett.} 52 (1984), no. 12, 997--1000 .



\bibitem{GS}
S. J. Gustafson and I. M. Sigal. %Stephen~J. Gustafson and Israel~Michael Sigal.
\newblock {\em Mathematical Concepts of Quantum Mechanics}, 2nd edition.
\newblock Universitext. Springer-Verlag, Berlin, 2011.


\bibitem{HN}
N. Hayashi and P. Naumkin.
\newblock Asymptotics for large time of solutions to the nonlinear Schr\"{o}dinger and Hartree equations.
\newblock {\em  Amer. J. Math.} 120 (1998), 369--389.



\bibitem{Jerome} 
J. W. Jerome.
Time dependent closed quantum systems: Nonlinear Kohn-Sham potential operators and weak solutions. 
{\it J. Math. Anal. Appl.} 429 (2015), no. 2, 995--1006.

\bibitem{Jones} 
R. O. Jones. 
Density functional theory: Its origins, rise to prominence, and future. 
{\it Rev. Mod. Phys.} 87 (2015), no. 3, 897--923. 


\bibitem{KP}
J. Kato and F. Pusateri.
\newblock A new proof of long range scattering for critical nonlinear Schr\"{o}dinger equations.
\newblock {\em Diff. Int. Equations} 24 (2011), no. 9-10, 923--940.


\bibitem{Kohn}
W. Kohn.
Nobel Lecture: Electronic structure of matter--wave functions and density functionals.
{\it Rev. Mod. Phys.} 71 (1998), no. 5, 1253.


%W. Kohn and L. J. Sham, 
%Self-consistent equations including exchange and correlation effects, 
%Phys. Rev. (2), 140 (1965), pp. A1133--A1138
%, https://doi.org/10.1103/PhysRev. 140.A1133.


\bibitem{KL} 
E. S. Kryachko and E. V. Lude\~{n}a.
Density functional theory: Foundations reviewed. 
{\it Phys. Rep.} 544 (2014), no. 2, 123--239. 

\bibitem{LeBL} 
C. Le Bris and P.-L. Lions.
\newblock From atoms to crystals: a mathematical journey.
\newblock {\em Bull. Amer. Math. Soc.} 42 (2005), 291--363. 


%\bibitem{Lew}
%M. Lewin.
%\newblock Existence of Hartree-Fock excited states for atoms and molecules. 
%\newblock {\em Lett. Math. Phys.} 108 (2018), no. 4, 985--1006.


\bibitem{LewSab1}
M. Lewin and J. Sabin. 
\newblock The Hartree equation for infinitely many particles I. Well-posedness theory. 
\newblock {\em Comm. Math. Phys.} 334 (2015), no. 1, 117--170.

\bibitem{LewSab2}
M. Lewin and J. Sabin. 
\newblock The Hartree equation for infinitely many particles, II: Dispersion and scattering in 2D. 
\newblock {\em Anal. PDE} 7 (2014), no. 6, 1339--1363.

\bibitem{Lieb}
E. Lieb.
\newblock The stability of matter: from atoms to stars.
\newblock {\em Bull. Amer. Math. Soc.}  22 (1990), no .1, 1--50.


%\bibitem{LiebSeirSolYng} E. Lieb, R. Seiringer, J.P. Solovej, J. Yngvason, 
%{\em The mathematics of the Bose gas and its condensation}. Oberwolfach Seminars Series, { 34}. Birkhaeuser Verlag, Basel, 2005.

%\bibitem[LiS1]{LiS1} E.H. Lieb, B. Simon, 
%Thomas-Fermi theory of atoms, molecules and solids. Adv Math. { 23} no.1 (1977) 22-116.

\bibitem{LiebSim}
E. H. Lieb and B. Simon. 
\newblock The Hartree-Fock theory for Coulomb systems.
\newblock {\em Comm. Math. Phys.} 53 (1977), no. 3, 185--194.


\bibitem{Lions1}
P.-L. Lions.
\newblock Solutions of Hartree-Fock equations for Coulomb systems.
\newblock {\em Comm. Math. Phys.} 109 (1987), 33--97. 
%The Hartree-Fock theory of crystals??

\bibitem{Lions}
P.-L. Lions.
\newblock Hartree-Fock and related equations.
Nonlinear partial differential equations and their applications.
\newblock Coll\`ege de France Seminar, Vol. IX.
{\it Pitman Res. Notes Math.} Ser. {181} (1988) 304--333.



\bibitem{CSS}
S.-J. Oh and F. Pusateri. 
\newblock Decay and scattering for the Chern-Simons-Schr\"odinger System.
\newblock {\em Int. Math. Res. Notices} (2015), no. 24, 13122--13147.


\bibitem{SchleinRk}
B. Schlein.
\newblock Unpublished note, 2019.

\bibitem{Sig} 
I. M. Sigal. 
On long-range scattering.
{\em Duke Math. J.} 60 (1990), no 2, 473--496.

\bibitem{SigSof}
I. M. Sigal and A. Soffer. Long-range many-body scattering.
{\em Invent. Math.} 99, no. 1, 115--143 (1990).

\bibitem{SteinBook1}
E. M. Stein.
Singular integrals and differentiability properties of functions.
{\em Princeton Mathematical Series}, No. 30 Princeton University Press, Princeton, N.J. 1970 xiv+290 pp. 

\bibitem{Strauss}
W. Strauss.
\newblock Nonlinear scattering theory at low energy. 
\newblock {\em J. Funct. Anal.} 41 (1981), no. 1, 110--133.


\bibitem{Ull} 
C. A. Ullrich. 
Time-Dependent Density-Functional Theory: Concepts and Applications. 
{\em Oxford University Press} 2012. %ISBN-13: 9780199563029

\end{thebibliography}

\bigskip

\end{document}